\providecommand{\U}[1]{\protect\rule{.1in}{.1in}}
\newtheorem{prop}{Proposition}[section]
\newtheorem{cor}[prop]{Corollary}
\newtheorem{rmk}[prop]{Remark}
\newtheorem{lem}[prop]{Lemma}
\newtheorem{theo}[prop]{Theorem}
\newtheorem{examp}[prop]{Example}
\newcommand{\tr}{\mbox{\rm Tr}}
\newcommand{\EE}{\mathbb{E}}
\newcommand{\PP}{\mathbb{P}}
\newcommand{\RR}{\mathbb{R}}
\newcommand{\WW}{\mathbb{W}}
\newcommand{\Ba}{ {\cal B }}
\newcommand{\Da}{ {\cal D }}
\newcommand{\Va}{ {\cal V }}
\newcommand{\Ga}{ {\cal G }}
\newcommand{\Pa}{ {\cal P }}
\newcommand{\Za}{ {\cal Z }}
\newcommand{\cqfd}{\hfill\blbx \\}
\def\blbx{\hbox{\vrule height 5pt width 5pt depth 0pt}\medskip}
\def \PP{\mathbb{P}}
\def \RR{\mathbb{R}}
\def \EE{\mathbb{E}}
\def \WW{\mathbb{W}}
\newcommand{\cchi}{\protect\raisebox{2pt}{$\chi$}}
\newcommand{\vertiii}[1]{{\left\vert\kern-0.25ex\left\vert\kern-0.25ex\left\vert #1
    \right\vert\kern-0.25ex\right\vert\kern-0.25ex\right\vert}}
\numberwithin{equation}{section}
\begin{document}

\title{On the Kantorovich contraction of Markov semigroups}

\author{ P. Del Moral\thanks{Centre de Recherche Inria Bordeaux Sud-Ouest, Talence, 33405, FR. E-Mail: \texttt{pierre.del-moral@inria.fr}}, M. Gerber\thanks{School of Mathematics, University of Bristol, Bristol, BS8 1UG, UK. E-Mail: \texttt{mathieu.gerber@bristol.ac.uk}}}

\maketitle
\begin{abstract} 

This paper develops a novel operator theoretic framework to study the contraction properties of  Markov semigroups with respect to a general class of Kantorovich semi-distances, which notably includes Wasserstein distances.  The rather simple contraction cost framework developed in this article, which   combines standard Lyapunov techniques with local contraction conditions,  helps to unifying and simplifying many arguments in the stability of Markov semigroups, as well as to improve upon some existing results. 

Our results can   be applied to both discrete time and continuous time Markov semigroups, and we illustrate their  wide applicability  in the context of (i)
Markov transitions on models with boundary states, including bounded domains with entrance boundaries,  (ii) operator  products of a Markov kernel and its adjoint, including  two-block-type Gibbs samplers,  (iii) iterated random functions  and (iv) diffusion models, including overdampted Langevin diffusion with convex at infinity potentials.

\textbf{Keywords:} Markov operators and semigroups, contraction coefficients, Kantorovich semi-distances, Wasserstein distances, weighted total variation norms.

\noindent\textbf{Mathematics Subject Classification:} Primary 37H30, 60J05, 47H09;  secondary 37A30,  60J25.
\end{abstract}
%

\section{Introduction}

\subsection{Set-up}

In this work we let $\Ba(S)$ denote the set of measurable functions on a complete separable  metric space $(S,\psi_S)$,  we let  $\Vert f\Vert:=\sup_{x\in S}|f(x)|$  denote the uniform norm of a function $f\in \Ba(S)$ and $\Pa(S)$ be the set of  probability measures on $S$. For any measures $\mu_1,\mu_2\in\Pa(S)$  we let $\Pi(\mu_1,\mu_2)$  denote the convex subset of probability  measures $\pi\in \Pa(S^2)$ with respective marginal $\mu_1$ and $\mu_2$ w.r.t.~the first and the second coordinate. 

Next, we let $V\in\Ba(S)$ be a  lower semi-continuous (abbreviated l.s.c.) function, lower bounded away from zero, and  we denote by $\Ba_V(S)$   the subspace of functions $f\in \Ba(S)$ such that $\Vert f/V\Vert<\infty$. In addition, we set
$$
\Pa_{V}(S):=\left\{\mu\in \Pa(S)~:~\mu(V)<\infty\right\}
\quad \mbox{\rm with}\quad \mu(V):=\int~V(x)~\mu(dx) 
$$
and recall that the weighted total variation norm (a.k.a.~$V$-norm) on $\Pa_{V}(S)$ is defined, for any $\mu_1,\mu_2\in \Pa_{V}(S)$, by the formula
$$
\Vert \mu_1-\mu_2\Vert_V:=\sup{\left\{ \vert(\mu_1-\mu_2)(f)\vert~:~f\in\Ba_V(S)~s.t.~\Vert f/V\Vert \leq 1 \right\}}=\vert \mu_1-\mu_2\vert(V).
$$
In the above display  $\vert\nu \vert:=\nu_++\nu_-$ stands for the total variation measure associated with
 a Hahn-Jordan decomposition $\nu=\nu_+-\nu_-$ of the signed measure $\nu=\mu_1-\mu_2$.

The right-action $f\in \Ba_V(S)\mapsto P(f)\in \Ba_V(S)$
 and the left-action $\mu\in \Pa_V(S)\mapsto
 \mu P\in \Pa_V(S)$ of  a Markov integral operator $P(x,dy)$ 
are defined, for any $x\in S$, by the function
\begin{equation}\label{P-V-f}
P(f)(x)=\int~P(x,dy)~f(y)\quad \mbox{\rm and the measure}\quad
(\mu P)(dy):=\int \mu(dx)P(x,dy).
\end{equation}
In what follows, we say that $P$ is a $V$-operator as soon as we have
\begin{equation}\label{P-V-norm}
\vertiii{ P}_{\tiny op,V}:=
\Vert P(V)/V\Vert=\sup{\left\{ \Vert P(f)/V\Vert~:~f\in\Ba_V(S)~\mbox{s.t.}~\Vert f/V\Vert\leq 1\right\}}<\infty  
\end{equation}
and we denote by $(P_n)_{n\geq 0}$  the    discrete generation semigroup associated with $P$,  defined sequentially for any integer $n\geq 1$ by
\begin{equation}\label{def-Q-s-t-intro}
P_n:=P_{n-1}P=PP_{n-1}\quad  \text{with $P_0=I$   the identity operator}.
\end{equation}
In the above display, $P_{n-1}P$ is a shorthand notation for the composition $P_{n-1}\circ P$ of the right or left-action operators defined respectively in  \eqref{P-V-f}.

 We  say  that a function $\phi$ defined on $S^2$ is a semi-distance  if the following three conditions hold: (i)  $\phi(x,y)\geq 0$ for all $x,y\in S$, (ii) $\phi(x,y)=0$ if and only if $x=y$  and (iii) $\phi$ is  l.s.c.~(and therefore measurable). It is easily checked that semi-distances  are stable w.r.t.~sums and  products, and that  $h\circ \phi$ is a semi-distance if $h$ is strictly increasing with $h(0)=0$ and $\phi$ is a semi-distance.  

Examples of semi-distances discussed in the article include the discrete metric $\varphi_0(x,y):=1_{x\neq y}$ and, since $V$ is assumed to be l.s.c., the weighted discrete metric defined by 
 \begin{align}\label{refvarpiV}
\varphi_V(x,y):=\varphi_0(x,y)~\varpi_V(x,y)\quad \mbox{\rm with}\quad \varpi_V(x,y)=V(x)+V(y).
 \end{align}
Note also that for any semi-distances $(\phi,\kappa)$ and any $\iota\in [0,1]$ the functions   
 \begin{align}\label{eq:kappa_iota}
\phi_V(x,y):=\phi(x,y)+\varphi_V(x,y)
\quad\mbox{\rm and}
\quad\kappa_{\iota,V}(x,y):=\kappa(x,y)^{\iota}~\varphi_V(x,y)^{1-\iota} 
 \end{align}
 are also  semi-distances. 
 
We end this section with some notation and terminology used in the article. For indicator functions $f=1_{A}$ of a measurable subset $A\subset S$ and a given measure $\nu$ sometimes we write $\nu(A)$ instead of $\nu(1_A)$. A  measure $\nu_1$ is said to be absolutely continuous with respect to another measure $\nu_2$, and we write $\nu_1\ll \nu_2$, if $\nu_2(A)=0$ implies $\nu_1(A)=0$ for any measurable subset $A\subset S$. Whenever $\nu_1\ll \nu_2$, we denote by ${d\nu_1}/{d\nu_2}$ the Radon-Nikodym derivative of $\nu_1$ w.r.t. $\nu_2$. We also consider the partial order, denoted $\nu_1\leq  \nu_2$, when $\nu_1(A)\leq \nu_2(A)$ for any measurable subset $A\subset S$. The positive and negative part of $a\in\RR$ are defined respectively by $a_+=\max(a,0)$
and  $a_-=\max(-a,0)$. Given $a,b\in\RR$ we also set $a\vee b=\max(a,b)$ and
$a\wedge b=\min(a,b)$. 

\subsection{Kantorovich semi-distances}

For some given  semi-distance $\phi$, the Kantorovich semi-distance between  two measures $\mu_1,\mu_2\in\mathcal{P}(S)$ is defined by
 \begin{equation}\label{ref-cost}
D_{\phi}(\mu_1,\mu_2):=\inf_{\pi\in \Pi(\mu_1,\mu_2)}{\pi(\phi)}
\quad \mbox{\rm with}\quad \pi(\phi):=\int~\phi(x,y)~\pi(d(x,y)).
\end{equation}
The assumption that  $S$ is a complete and separable metric space and the fact that $\phi$ is l.s.c.~and non-negative ensure the existence of an optimal transport plan (see \cite{Villani}, Theorem 4.1). Moreover, the assumption that $\phi(x,y)=0\Leftrightarrow x=y$ ensure that $D_{\phi}(\mu_1,\mu_2)=0$ if and only if $\mu_1=\mu_2$. 

Noting that  $D_{\phi}(\delta_{x},\delta_{y})=\phi(x,y)$, the quantity $\phi(x,y)$ can be seen as measuring the cost of transporting $x$ to $y\in S$, and it is typically small when the states $x$ and $y$ are close.  We also note that in the optimal transport literature, the measures in $ \Pi(\mu_1,\mu_2)$  are sometimes called Kantorovich plans, transport plans or  couplings. 

 The Kantorovich semi-distance   is related to several well-known measures of distance between probability distributions. Specifically,  the quantity   $D_{\psi_S}(\mu_1,\mu_2)$ is the  Kantorovitch distance between $\mu_1$ and $\mu_2$ (a.k.a.~the Wasserstein 1-distance)  while for $p\geq 1$ the Wasserstein $p$-distance $\WW_{\psi_S,p}(\mu_1,\mu_2)$    is defined by  $\WW_{\psi_S,p}(\mu_1,\mu_2)^p=D_{\psi_S^p}(\mu_1,\mu_2)$.  We also have the equality $D_{\varphi_0}(\mu_1,\mu_2)=\|\mu_1-\mu_2\|_{tv}$, that is $D_{\varphi_0}(\mu_1,\mu_2)$ is the total variation distance between the probability measures $\mu_1$ and $\mu_2$, and the weighted total variation norm   $\|\mu_1-\mu_2\|_V$ is related to the Kantorovich semi-distance    \eqref{ref-cost} by the formula
\begin{eqnarray}
D_{\varphi_V}(\mu_1,\mu_2)=\Vert \mu_1-\mu_2\Vert_V.\label{kr2}
\end{eqnarray}
We also have the dual formulation
\begin{equation}\label{kr2-dual}
\Vert \mu_1-\mu_2\Vert_V:=\sup{\left\{ \vert(\mu_1-\mu_2)(f)\vert~:~f\in\Ba(S)~;~\mbox{\rm osc}_V(f) \leq 1 \right\}},
\end{equation}
with the $\varphi_V$-oscillation $\mbox{\rm osc}_V(f)$ of the function $f\in \Ba(S)$ defined by the Lipschitz constant
$$
\mbox{\rm osc}_V(f):=\sup{\left\{|f(x_1)-f(x_2)|/\varphi_V(x_1,x_2)~:~(x_1,x_2)\in S^2~;~x_1\not=x_2 \right\}}.
 $$

This $V$-norm formulation of the Kantorovich semi-distance  associated with the  weighted discrete metric $\varphi_V$   is a consequence of the Kantorovich-Rubinstein theorem~\cite{edwards,kellerer,kellerer2}; see Theorem 19.1.7 in~\cite{douc} and also Proposition 8.2.16 in~\cite{dm-penev-2017} for a proof of \eqref{kr2} and \eqref{kr2-dual}.

If the assumption that $\phi$  is a semi-distance   ensures the existence of an optimal transport plan, it does not  guarantee that the quantity   $D_{\phi}(\mu_1,\mu_2)$ is finite. To solve this issue  we assume that $\phi$ is a $V$-semi-distance, in the sense that
 $\|\phi/\varpi_{V}\|<\infty$. Under this assumption on $\phi$,  for any $\mu_1,\mu_2\in\Pa_V(S)$  we indeed have
$$
D_{\phi}(\mu_1,\mu_2)\leq 
(\mu_1\otimes\mu_2)(\phi)\leq  \|\phi/\varpi_{V}\| \left(\mu_1(V)+\mu_2(V)\right)<\infty
$$
with the tensor product measure $(\mu_1\otimes\mu_2)(d(x,y)):=\mu_1(dx)\mu_2(dy)$.

\subsection{Contraction of Markov transport maps\label{sub:contM}}
 
By successively applying a Markov  operator $P$ to the probability measures $\mu_1$ and $\mu_2$ we expect the measures $\mu_1 P_n$ and $\mu_2 P_n$ to become closer  as $n$ increases,  so that the Kantorovitch semi-distance between  $\mu_1 P_n$ and $\mu_2 P_n$    decreases with $n$. 

To quantify this property, for two  $V$-semi distances $\phi$ and $\psi$  and for a  Markov $V$-operator $P$  we let  $\beta_{\psi,\phi}(P)\in [0,\infty]$ be defined  by
 \begin{equation}\label{def-betaP2}
\beta_{\psi,\phi}(P)=\sup{
\frac{D_{\phi}(\mu_1 P,\mu_2 P)}{D_{\psi}(\mu_1,\mu_2)}}
 \end{equation}
where the  supremum is taken over all pairs of measures $ (\mu_1,\mu_2)\in \Pa_V(S)^2$ for which  we have $D_{\psi}(\mu_1,\mu_2)>0$. Notice that, since $P$ is  a Markov $V$-operator and $\mu_1,\mu_2\in\mathcal{P}_V(S)$, both $\mu_1 P$ and $\mu_2 P$ belong to $\mathcal{P}_V(S)$ and thus, following the discussion of the previous subsection,  $D_{\phi}(\mu_1 P,\mu_2 P)<\infty$; we however stress that    the coefficient defined in \eqref{def-betaP2} may not be finite.

This coefficient, which we will refer to as the $(\psi,\phi)$-Dobrushin contraction coefficient of $P$ in what follows, does not   depend on the function $V$, and    the supremum in \eqref{def-betaP2} is obtained on Dirac measures. Specifically, for    any $V$-semi-distances $\phi$ and $\psi$  we let $\ell_{\psi,\phi}(P):(x,y)\in S^2\mapsto [0,\infty]$ be the function defined by
$$
\ell_{\psi,\phi}(P)(x,y)=\frac{D_{\phi}(\delta_{x}P,\delta_{y}P)}{D_{\psi}(\delta_{x},\delta_{y})},\quad x,y\in S 
$$
and prove the following result:
\begin{lem}\label{lemma:ced}
Let $\phi$ and $\psi$ be two $V$-semi-distances. Then,
\begin{align}\label{ced}
\beta_{\psi,\phi}(P)=\sup_{(x,y)\in S^2~:~x\neq y}\ell_{\psi,\phi}(P)(x,y).
\end{align}
\end{lem}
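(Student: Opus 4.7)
The plan is to prove two matching inequalities. The easy direction, $\beta_{\psi,\phi}(P)\geq \sup_{x\neq y}\ell_{\psi,\phi}(P)(x,y)$, is obtained by restricting the supremum in the definition \eqref{def-betaP2} of $\beta_{\psi,\phi}(P)$ to pairs of Dirac measures $(\mu_1,\mu_2)=(\delta_x,\delta_y)$ with $x\neq y$. Such Diracs belong to $\Pa_V(S)$ since $V$ is real-valued, and $D_{\psi}(\delta_x,\delta_y)=\psi(x,y)>0$ for $x\neq y$ because $\psi$ is a semi-distance, so these pairs are admissible in \eqref{def-betaP2} and the ratio $D_{\phi}(\delta_xP,\delta_yP)/D_{\psi}(\delta_x,\delta_y)$ appearing in the supremum is precisely $\ell_{\psi,\phi}(P)(x,y)$.

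For the reverse direction, set $L:=\sup_{x\neq y}\ell_{\psi,\phi}(P)(x,y)$ and assume $L<\infty$ (otherwise there is nothing to show). Fix $\mu_1,\mu_2\in \Pa_V(S)$ with $D_{\psi}(\mu_1,\mu_2)>0$ and an optimal transport plan $\pi\in \Pi(\mu_1,\mu_2)$, whose existence is guaranteed by Theorem~4.1 of \cite{Villani}. The strategy is to glue $\pi$ with a measurable family of near-optimal couplings $K_{x,y}\in \Pi(\delta_xP,\delta_yP)$ in order to produce a concrete competitor in $\Pi(\mu_1P,\mu_2P)$. Given such a family, I would define
\[
\tilde\pi(A):=\int_{S^2} K_{x,y}(A)\,\pi(d(x,y)),\qquad A\subset S^2\text{ measurable},
\]
and check, by testing against rectangles $A=A_1\times S$ and $A=S\times A_2$ and using that $K_{x,y}$ has marginals $\delta_xP$ and $\delta_yP$, that $\tilde\pi\in \Pi(\mu_1P,\mu_2P)$. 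Fubini then yields
\[
D_{\phi}(\mu_1P,\mu_2P)\leq \tilde\pi(\phi)=\int_{S^2} K_{x,y}(\phi)\,\pi(d(x,y)).
\]
On the diagonal $\{x=y\}$ the two marginals coincide, so one takes $K_{x,x}$ concentrated on the diagonal of $S^2$, giving $K_{x,x}(\phi)=0=\psi(x,x)$. Off the diagonal, $K_{x,y}(\phi)\leq D_{\phi}(\delta_xP,\delta_yP)+\varepsilon \leq L\,\psi(x,y)+\varepsilon$ by the very definition of $L$. Integrating against $\pi$ produces
\[
D_{\phi}(\mu_1P,\mu_2P)\leq L\,\pi(\psi)+\varepsilon = L\,D_{\psi}(\mu_1,\mu_2)+\varepsilon,
\]
and letting $\varepsilon\downarrow 0$ and dividing by $D_{\psi}(\mu_1,\mu_2)$ gives $\beta_{\psi,\phi}(P)\leq L$.

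The main obstacle is the measurable selection of the kernel $(x,y)\mapsto K_{x,y}$. The set-valued map $(x,y)\mapsto \Pi(\delta_xP,\delta_yP)$ takes nonempty, weakly compact, convex values in the Polish space $\Pa(S^2)$, and the functional $\pi'\mapsto \pi'(\phi)$ is lower semi-continuous on it thanks to the lower semi-continuity of $\phi$. A measurable $\varepsilon$-optimal selector can then be extracted from the Kuratowski--Ryll-Nardzewski selection theorem, and pasted to the on-diagonal choice using the measurability of the diagonal. Once $K_{x,y}$ is produced, the remainder of the argument is a routine application of Fubini and of the definition of $L$.
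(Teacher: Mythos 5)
Your first inequality and the overall gluing strategy are exactly right and match the paper's plan. The gap is in the measurable-selection step, which you correctly identify as ``the main obstacle'' but then dispatch too quickly. The Kuratowski--Ryll-Nardzewski theorem produces a measurable selector from a \emph{weakly measurable}, closed-valued correspondence into a Polish space; it does not by itself produce an $\varepsilon$-optimal one. To apply it you must first show that the correspondence $(x,y)\mapsto\{\pi'\in\Pi(\delta_xP,\delta_yP):\pi'(\phi)\leq D_\phi(\delta_xP,\delta_yP)+\varepsilon\}$ is weakly measurable, and this is the genuinely delicate point when $\phi$ is only l.s.c.: it requires (at least) the Borel measurability of $(x,y)\mapsto D_\phi(\delta_xP,\delta_yP)$ and then a measurable-intersection argument, neither of which is automatic from ``compact convex values and an l.s.c.\ functional''. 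The paper is explicit about this obstruction: it remarks that for a merely l.s.c.\ cost it is \emph{unclear} whether a Markov kernel selecting optimal plans exists, and that the measurability of $(x,y)\mapsto D_\phi(\delta_xP,\delta_yP)$ is itself only obtained as a byproduct of its approximation argument. A correct repair of your route would invoke the Jankov--von Neumann/Aumann selection theorem to get a \emph{universally} measurable $\varepsilon$-optimal selector (which suffices, since you only integrate against the fixed measure $\pi$), but that is a different theorem with different hypotheses, and as written your argument is incomplete.

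For comparison, the paper circumvents the issue entirely: it approximates the l.s.c.\ cost $\phi$ from below by a nondecreasing sequence of bounded \emph{continuous} costs $\phi_k$, for which Corollary 5.22 of \cite{Villani} supplies a genuine Markov kernel $Q_k$ selecting exactly optimal plans in $\Pi(\delta_xP,\delta_yP)$; it runs your gluing argument with $\pi Q_k$ for each $k$, and then recovers the statement for $\phi$ by proving $D_\phi(\mu_1,\mu_2)=\limsup_k D_{\phi_k}(\mu_1,\mu_2)$ via compactness of $\Pi(\mu_1,\mu_2)$ and monotone convergence. If you want to keep your direct approach, you must either carry out the measurability verification for the $\varepsilon$-argmin correspondence in detail or switch to a selection theorem adapted to analytic sets; otherwise, the continuous-approximation detour is the cleaner fix.
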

\begin{proof}
See Section \ref{ced-proof}.
\end{proof}

Given a  $V$-semi-distance $\phi$, a first natural choice for the   reference  $V$-semi-distance  $\psi$ used to define the  coefficient in \eqref{def-betaP2} is $\psi=\phi$. In this case the coefficient  $\beta_{\phi}(P):=\beta_{\phi,\phi}(P)$ is the so-called    $\phi$-Dobrushin contraction coefficient (a.k.a.~Kantorovich norm~\cite{dob-96}), and  below we will write $\ell_{\phi}(P)$ instead of $\ell_{\phi,\phi}(P)$.  In particular, it is  worth noting that, by \eqref{kr2}, the coefficient $\beta_{\varphi_V}(P)$ is equal to the  $V$-norm contraction coefficient of $P$.
A second natural choice for the $V$-semi-distance  $\psi$  in (\ref{def-betaP2}) is the $V$-semi-distance  $\psi=\varphi_V$,  in which case  we have,  by \eqref{kr2},
\begin{align*}
\beta_{\varphi_V,\phi}(P)=\sup
\frac{D_{\phi}(\mu_1 P,\mu_2 P)}{\|\mu_1-\mu_2\|_V},
\end{align*}
where the  supremum is taken over all measures $ (\mu_1,\mu_2)\in \Pa_V(S)^2$ such that $\|\mu_1-\mu_2\|_V>0$. 

Consider a Markov integral operator $K$ and some  other l.s.c.~function  $W$,  lower bounded away from zero  and such that $\Vert K(W)/V\Vert<\infty$. In this case, the right-action
$f\in \Ba_W(S)\mapsto K(f)\in \Ba_V(S)$
 and the left-action $\mu\in \Pa_V(S)\mapsto
 \mu K\in \Pa_W(S)$ are well defined and, by \eqref{kr2},   we have
\begin{align}\label{V2W-ref}
\beta_{\varphi_V,\varphi_W}(K)=\cchi_{V,W}(K):=\sup
\frac{\|\mu_1 K-\mu_2 K\|_W}{\|\mu_1-\mu_2\|_V}
\end{align}
where the  supremum is taken over all measures $ (\mu_1,\mu_2)\in \Pa_V(S)^2$ such that $\|\mu_1-\mu_2\|_V>0$.
The condition $\Vert K(W)/V\Vert<\infty$ ensures that  $K$ is a $(V,W)$-operator in the sense that
\begin{equation}\label{P-VW-norm}
\vertiii{ K}_{\tiny V,W}:=
\Vert K(W)/V\Vert=\sup{\left\{ \Vert K(f)/V\Vert~:~f\in\Ba_W(S)~;~\Vert f/W\Vert\leq 1\right\}}<\infty 
\end{equation}
and, in this context, by the dual formulation \eqref{kr2-dual} we have the equivalent formulation
\begin{equation}\label{P-V-norm-osc-equiv}
\cchi_{V,W}(K)=\sup{\{\mbox{\rm osc}_V(K(f))~:~f\in\Ba(S)~;~\mbox{\rm osc}_W(f) \leq 1 \}}.
\end{equation}
For a more detailed discussion on these equivalent formulations we refer to Section 8.2 in~\cite{dm-penev-2017} and,  for completeness, a sketched proof of \eqref{P-V-norm-osc-equiv} is provided in Section~\ref{P-V-norm-osc-equiv-proof}.

Lastly, we note that the product $P:=KL$ of a  $(V,W)$-operator $K$ and a $(W,V)$-operator $L$ is a $V$-operator, and we have
$$
\vertiii{ P}_{\tiny op, V}=\vertiii{ KL}_{\tiny V,V}\leq \vertiii{ K}_{\tiny V,W}~\vertiii{ L}_{\tiny W,V}.$$

\subsection{Contribution of the paper}

The purpose of this article is to provide conditions on the pair $(P,\phi)$ of  Markov $V$-operator   and $V$-semi distance  under which the contraction coefficients $\beta_\phi(P_n)$ and $\beta_{\varphi_V,\phi}(P_n)$ converge  to zero exponentially fast as $n\rightarrow\infty$. Our  operator theoretic approach  extends the one developed in \cite{ado-24,dm-penev-2017} for the weighted total variation norm to a broader class of Kantorovitch semi-distances, which notably includes Wasserstein distances. 
Our analysis  relies on two standard assumptions on $P$; namely a simple geometric drift condition and a local contraction property. Freed from couplings arguments and  carefully designed distance-like functions, the proofs of our main theorems have the advantage to be relatively straightforward. At the same time, these theorems allow to recover several known results, and to improve upon others (as discussed in Section \ref{review-sec}).  The present article notably complements the studies~\cite{ado-24,dhj}  dedicated to  the $V$-norm stability of positive semigroups.  Specifically,   Markov semigroups being particular instances of positive semigroups, the Kantorovich contraction analysis developed in the present article   applies directly to some classes of  semigroups discussed in~\cite{ado-24,dhj}.

We illustrate our results in the context of Markov transitions on bounded domains with entrance boundaries (i.e.~boundary states that cannot be reached from the inside), as well as Gibbs samplers and more standard  random iterated functions.   To the best of our knowledge, the Wasserstein  contraction properties we obtain for
Markov transitions on domains with boundaries and the ones we obtain for two-block Gibbs samplers and for a general class of iterated random functions   are the first results of this type for these  classes of models. We also show that our results can be used to easily derive exponential contraction estimates for continuous time Markov semigroups, and illustrate this application of our operator theoretic approach  in the context of  diffusion models.

The remainder of this article is structured as follows: Our main stability results as well as a review of the existing literature on the contraction properties of Markov operators are presented in Section~\ref{sec-main-the}.  Section~\ref{sec:applications} is dedicated to a series of illustrations, and most of the proofs are collected in Section \ref{sec:proof}.

\section{Contraction theorems \label{sec-main-the}}

\subsection{Assumptions}

In this section we let  
$P$ be a Markov $V$-operator for some l.s.c. function $V$, lower bounded away from zero, and all the results presented below will impose the following  regularity condition  on $P$:

\begin{enumerate}[label=(H{{\arabic*}})]
\item\label{H1} There exist  constants $\epsilon\in ]0,1[$ and $c\geq 0$ such that $P(V)\leq \epsilon V+c$.
\end{enumerate}

The drift condition in \ref{H1} is standard for studying the behaviour of the discrete generation semigroup $(P_n)_{n\geq 0}$ (see Section \ref{review-sec}). We   stress that, in \ref{H1}, the function $V$ is not necessarily a proper Lyapunov function, in the sense that $V$ is not assumed to belong to  the sub-algebra $\Ba_{\infty}(S)$ of locally bounded and uniformly positive functions  
 that growth at infinity (in the sense that  their sub-level sets are compact). We note that condition \ref{H1}    ensures that $(P_n)_{n\geq 0}$ is a semigroup of Markov $V$-operators, that is that we have $\vertiii{P_n}_{\tiny op,V}<\infty$ for any $n\geq 0$.

 In many applications  $P=KL$ for some  Markov integral operators $K$ and $L$ (see Section \ref{sub:GIbbs}). In this context, if there exists a  l.s.c.~function $W$, lower bounded away from zero  and such that 
\begin{equation}\label{Lyap-KL}
K(W)\leq \epsilon_1~ V+c_1\quad \mbox{\rm and}\quad
L(V)\leq \epsilon_2~ W+c_2
\end{equation}
for some constants $c_1,c_2\geq 0$ and   $\epsilon_1,\epsilon_2\in ]0,1[$, then $P$ is a Markov $V$-operator and \ref{H1} holds with $\epsilon=\epsilon_1\epsilon_2$ and $c=(c_1\epsilon_2+c_2)$.

In addition to \ref{H1}, the results presented in this section will assume that the following additional condition  on $P$ is satisfied:

\begin{enumerate}[label=(H{{\arabic*}})]
\setcounter{enumi}{1}
\item\label{H2}  There exist a $V$-semi-distance $\kappa$ such that,  for some parameter $r_0>0$ and   map  $\alpha:[r_0,\infty[\mapsto \alpha(r)\in ]0,1[$, we have
\begin{align*}
\varpi_V(x,y)\leq r\implies D_\kappa\left(\delta_x P,\delta_y P\right)\leq (1-\alpha(r))~\kappa(x,y),\quad\forall r\geq r_0.
\end{align*}
 
\end{enumerate}

Like \ref{H1},  condition \ref{H2} is very standard in the literature (see Section \ref{review-sec}). In parti\-cular,  assuming that \ref{H2} holds  for the discrete  metric (that is  with $\kappa=\varphi_0$)  is equivalent to assuming the following standard  local minorization condition:
\begin{equation}\label{loc-min}
\text{$\forall r\geq r_0$, $\exists \alpha(r)\in]0,1[$ and $\exists \nu_r\in\mathcal{P}(S)$ such that }V(x)\leq r\implies \delta_{x}P\geq \alpha(r)\,\nu_r.
\end{equation}
 In this scenario,  when the function $V$ is bounded    \ref{H2} is a minorization condition on the whole state space, which takes the form
\begin{equation*}
\forall x\in S\qquad
\delta_xP\geq \jmath\,\nu
\quad\mbox{\rm
for some $\jmath>0$ and some probability measure $\nu\in\mathcal{P}(S)$. }
\end{equation*}
In the reverse angle, when $V\in \Ba_{\infty}(S)$  condition  \ref{H2}  is a local contraction condition  and (\ref{loc-min}) is a local minorization condition on the compact sub-level sets of the function $V$.  In particular,  when $V\in \Ba_{\infty}(S)$ the following lemma shows that condition \ref{H2} holds for the semi-distance $\kappa=\varphi_0$ under mild assumptions on $P$.

\begin{lem}\label{lemma:minor}
Assume that $V\in \Ba_{\infty}(S)$ and that $P(x,dy)=p(x,y)\eta(dy)$ for some  reference measure $\eta$  on $S$ and continuous function $p:S^2\rightarrow ]0,\infty[$. Then, condition \ref{H2} holds with the semi-distance $\kappa=\varphi_0$.
\end{lem}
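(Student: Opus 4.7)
The plan is to exploit the equivalence, noted in the paper immediately after the statement of \ref{H2}, between condition \ref{H2} with $\kappa=\varphi_0$ and the local minorization condition~\eqref{loc-min}: establishing the latter suffices.

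First, since $V\in\Ba_\infty(S)$, the sub-level sets $C_r:=\{x\in S~:~V(x)\leq r\}$ are compact, and I would choose $r_0$ large enough that $C_r$ is a nonempty compact set for every $r\geq r_0$. To make the minorization useful, I next need to produce a compact set $K_0\subset S$ with $\eta(K_0)>0$. Fixing any $x_0\in S$, the probability measure $P(x_0,\cdot)$ is tight on the Polish space $S$ by Ulam's theorem, so there exists a compact $K_0$ with $P(x_0,K_0)>0$; since $P(x_0,K_0)=\int_{K_0}p(x_0,y)\,\eta(dy)$ and $p(x_0,\cdot)>0$ pointwise, this forces $\eta(K_0)>0$.

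Continuity and strict positivity of $p$ on the compact product $C_r\times K_0$ then yield $m_r:=\min_{(x,y)\in C_r\times K_0}p(x,y)>0$. Setting $\nu_r(dy):=\eta(K_0)^{-1}\,1_{K_0}(y)\,\eta(dy)\in\Pa(S)$ and $\alpha(r):=\tfrac{1}{2}\wedge\bigl(m_r\eta(K_0)\bigr)\in\,]0,1[$, one reads off, for every $x\in C_r$ and every Borel $A\subset S$,
\[
(\delta_xP)(A)=\int_A p(x,y)\,\eta(dy)\geq m_r\,\eta(A\cap K_0)\geq \alpha(r)\,\nu_r(A),
\]
which is precisely the local minorization~\eqref{loc-min}.

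To conclude, observe that $\varpi_V(x,y)\leq r$ forces both $x$ and $y$ to lie in $C_r$, so the standard minorization coupling---writing $\delta_xP=\alpha(r)\nu_r+(1-\alpha(r))\mu'_x$ and likewise for $y$---produces $D_{\varphi_0}(\delta_xP,\delta_yP)\leq 1-\alpha(r)=(1-\alpha(r))\,\varphi_0(x,y)$ when $x\neq y$ (the case $x=y$ being trivial), which is condition \ref{H2} with $\kappa=\varphi_0$. The only step requiring real care is exhibiting the compact set $K_0$ of positive $\eta$-mass; everything else is immediate from continuity, compactness, and the basic coupling construction.
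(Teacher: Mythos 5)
Your proof is correct and follows essentially the same route as the paper: a local minorization over the compact sub-level sets of $V$, obtained by bounding the continuous positive density $p$ from below on a compact product set and normalizing the restricted reference measure. The only (immaterial) difference is how the compact set of positive $\eta$-mass is produced — you invoke Ulam's tightness theorem for $P(x_0,\cdot)$, whereas the paper exhausts $S$ by the sub-level sets $A_n$ and notes that $\eta(A_{n'})>0$ for some $n'$ since otherwise $\eta$ would be the zero measure.
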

\begin{proof}
For all $r\geq 1$ let $A_r=\{x\in S: V(x)\leq r\}$. Then, since $V\in \Ba_{\infty}(S)$ the set $A_r$ is compact for all $r\geq 1$ and we have $S=\cup_{n\in\mathbb{N}}A_n$, showing that $S$ is locally compact. In this context, there exists an $n'\in\mathbb{N}$ such that $\eta(A_{n'})>0$, since otherwise $\eta(S)=0$. In addition, since the density $p$ is assumed to be strictly positive and continuous, we have $a_r:=\inf_{(x,y)\in A_r\times A_{n'}}p(x,y)>0$ for all $r\geq n'$. Therefore, for all $r\geq n'$,
\begin{align*}
P(x, dy)\geq P(x, dy)1_{A_{n'}}(y) \geq a_r\, \eta(d y)1_{A_{n'}}(y)=b_r q(d y),\quad\forall x\in A_r
\end{align*}
with $b_r=a_r\eta(A_{n'})>0$ and $q(dy)=\eta(d y)1_{A_{n'}}(y)/\eta(A_{n'})$. This shows that \eqref{loc-min} holds with $r_0=n'$, $\alpha(r)=(1/2)\wedge b_r$ and $\nu_r=q$. The proof of the lemma is complete.
\end{proof}


 \subsection{Additional notation}
 For any $\rho\geq 0$ we introduce the rescaled   Lyapunov function
 $V_{\rho}$ and the l.s.c.~function $\varpi_{\rho}$ defined by  
 \begin{equation}\label{vero}
 V_{\rho}(x):=\frac{1}{2}+\rho~ V(x)\quad \mbox{\rm and}\quad
\varpi_{\rho}(x,y):=1+\rho~ \varpi_V(x,y).
\end{equation} 
For any $V$-semi-distances $(\phi,\kappa)$ and any $\iota\in [0,1]$ we also consider the $V$-semi-distances 
\begin{eqnarray}
\varphi_{\rho}(x,y)&:=&\varphi_0(x,y)~\varpi_{\rho}(x,y)\nonumber\\
\phi_{\rho}(x,y)&:=&\phi(x,y)+\varphi_{\rho}(x,y)\quad \mbox{\rm and}\quad
 \kappa_{\iota,\rho}(x,y):=\kappa(x,y)^{\iota}~\varpi_{\rho}(x,y)^{1-\iota}.\label{def-phi-rho}
\end{eqnarray}
Note that the functions $(\varpi_{\rho},\varphi_{\rho},\phi_{\rho}, \kappa_{\iota,\rho})$ coincide with the functions $(\varpi_{V_{\rho}},\varphi_{V_{\rho}},\phi_{V_{\rho}}, \kappa_{\iota,V_{\rho}})$
defined as in (\ref{refvarpiV}) and (\ref{eq:kappa_iota}) by replacing $V$ by the rescaled Lyapunov function $V_{\rho}$.

We remark that the l.s.c.~functions $\varpi_\rho$ and $\varpi_V$ are related through the following formula:
\begin{equation}\label{VVr}
\begin{array}{l}
\rho\,\varpi_V\leq\varpi_{\rho}
\leq (1+\rho)\,\varpi_V,\quad\forall \rho\geq 0.
\end{array}
\end{equation}
Lastly, when \ref{H1} and   \ref{H2} are  met, we set $r_{\epsilon}=1/(1-\epsilon)$ and for any $r> r_0\vee r_{\epsilon}$ we define the parameters
\begin{equation}\label{def-delta}
\delta_{\epsilon}(r):=\frac{1-\epsilon}{2+\epsilon}~\left(1-\frac{r_{\epsilon}}{r}\right)
\quad  \mbox{and}\quad
\rho_{\epsilon}(r):=\frac{1}{1+\epsilon}~\frac{\alpha(r)}{2r}~\in~ ]0,1/2].
\end{equation}

\subsection{Some comparison principles}

Let $(\phi,\varphi,\psi)$ be three $V$-semi-distances. Then, if  $P=K L$ for  some Markov $V$-operators $K$ and $L$, we have 
\begin{align}\label{eq:prodPK}
\beta_{\phi,\psi}(P)\leq \beta_{\phi,\varphi}(K)~\beta_{\varphi,\psi}(L).
\end{align}
Next, we note that
$$
  \Vert \psi/\varphi\Vert<\infty\implies \beta_{\phi,\psi}(P)\leq \Vert \psi/\varphi\Vert~\beta_{\phi,\varphi}(P)
\,\, 
$$
while
\begin{align}\label{eq:Use_BB}
\Vert\varphi/\phi\Vert<\infty\implies 
\beta_{\phi,\psi}(P)\leq~\Vert\varphi/\phi\Vert~\beta_{\varphi,\psi}(P).
\end{align}
Similarly, when $\Vert \psi/\phi\Vert<\infty$  we have both
$$
\beta_{\phi,\psi}(P)\leq \Vert \psi/\phi\Vert~\beta_{\phi}(P)
\quad \mbox{\rm and}\quad
\beta_{\phi,\psi}(P)\leq \Vert \psi/\phi\Vert~\beta_{\psi}(P).
$$
In particular, by applying the latter two results with $\phi=\varphi_V$, we have the implication
$$
 \|\psi/\varphi_V\|<\infty\Longrightarrow
\beta_{\varphi_V,\psi}(P)\leq \|\psi/\varphi_V\|~ \beta_{\varphi_V}(P)\quad
\mbox{\rm and}\quad 
\beta_{\varphi_V,\psi}(P)\leq \|\psi/\varphi_V\|~\beta_{\psi}(P).
$$
In the same vein, one can check that 
$$
\Vert \varphi/\psi\Vert<\infty\Longrightarrow
\beta_{\varphi}(P)\leq \Vert \varphi/\psi\Vert~\beta_{\varphi,\psi}(P)
\quad\mbox{\rm and}\quad
\beta_{\psi}(P)\leq \Vert \varphi/\psi\Vert~\beta_{\varphi,\psi}(P).
$$

The following technical lemma provides  some general scaling properties and   transfer principles of the $\phi$-Dobrushin  coefficient.

\begin{lem}\label{klem}
Let $\varphi$ be a $V$-semi-distance function. Then,
\begin{itemize}
\item For any constants $a> 0$  and  $\iota\in]0,1]$   we have  
\begin{equation}\label{SJ}
\beta_{a\varphi}(P)=\beta_{\varphi}(P)\quad \mbox{and}\quad
\beta_{\varphi^{\iota}}(P)\leq 
\beta_{\varphi}(P)^{\iota}.
\end{equation}
\item For any $V$-semi-distance functions $\psi$, constants $0<a\leq b$ and $\mu_1,\mu_2\in \Pa_V(S)$ we have
\begin{eqnarray}
a \varphi\leq\psi\leq b \varphi&\Longrightarrow &\beta_{\varphi}(P)\leq (b/a)~ \beta_{\psi}(P).\label{compCost}
\end{eqnarray}
\item For any $V$-semi-distance functions $\varphi\leq \psi $ such that
$\beta_{\psi}(P)<1$ and any constants $\iota\geq 0$ and $\delta\in ]0,1[$  we have
\begin{equation}\label{pre-Tex}
\beta_{\psi}(P)<\frac{\delta}{1+\iota}\Longrightarrow
\beta_{\iota\varphi +\psi}(P)\leq \delta.
\end{equation}
In particular, for $\iota \leq  1-\beta_{\psi}(P)$ we have 
\begin{align}\label{Tex}
\beta_{\iota\varphi+\psi}(P)\leq 1-\left(1-\beta_{\psi}(P)\right)^2.
\end{align}
\end{itemize}
\end{lem}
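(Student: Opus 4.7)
The plan is to reduce every bound to its pointwise counterpart using Lemma \ref{lemma:ced}, which replaces $\beta_\phi(P)$ by the supremum over $x\neq y$ of $\ell_\phi(P)(x,y)=D_\phi(\delta_xP,\delta_yP)/\phi(x,y)$. Thereafter only two facts about $D_\phi(\mu,\nu)=\inf_{\pi\in\Pi(\mu,\nu)}\pi(\phi)$ will be needed: the identity $D_\phi(\delta_x,\delta_y)=\phi(x,y)$, and that $\pi\mapsto\pi(\phi)$ is linear in $\phi$, so any pointwise inequality on the cost $\phi$ transfers to the same inequality on $D_\phi$.

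For \eqref{SJ}, the homogeneity $D_{a\varphi}=aD_\varphi$ gives $\ell_{a\varphi}(P)=\ell_\varphi(P)$ and hence $\beta_{a\varphi}(P)=\beta_\varphi(P)$. For the concave-power inequality I pick, at each pair $(x,y)$ with $x\neq y$, an optimal coupling $\pi^\star\in\Pi(\delta_xP,\delta_yP)$ for $\varphi$ (existence noted after \eqref{ref-cost}); since $t\mapsto t^\iota$ is concave on $[0,\infty[$ when $\iota\in(0,1]$, Jensen's inequality for the probability measure $\pi^\star$ yields $\pi^\star(\varphi^\iota)\leq\pi^\star(\varphi)^\iota$ and consequently
\begin{equation*}
\ell_{\varphi^\iota}(P)(x,y)\leq \frac{\pi^\star(\varphi^\iota)}{\varphi(x,y)^\iota}\leq \ell_\varphi(P)(x,y)^\iota\leq \beta_\varphi(P)^\iota.
\end{equation*}
The sandwich \eqref{compCost} follows from the same principle: $a\varphi\leq\psi\leq b\varphi$ yields $D_\varphi(\delta_xP,\delta_yP)\leq a^{-1}D_\psi(\delta_xP,\delta_yP)$ by passing $\pi(\varphi)\leq a^{-1}\pi(\psi)$ through the infimum, and $\varphi(x,y)\geq b^{-1}\psi(x,y)$ on the denominator, so dividing gives $\ell_\varphi(P)(x,y)\leq (b/a)\,\ell_\psi(P)(x,y)$.

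For the convex-combination estimates \eqref{pre-Tex} and \eqref{Tex}, the hypothesis $\varphi\leq\psi$ furnishes the pointwise sandwich $\psi\leq \iota\varphi+\psi\leq(1+\iota)\psi$. Pushing the upper bound through the infimum yields $D_{\iota\varphi+\psi}(\delta_xP,\delta_yP)\leq(1+\iota)D_\psi(\delta_xP,\delta_yP)$, while the lower bound gives $(\iota\varphi+\psi)(x,y)\geq\psi(x,y)$, so Lemma \ref{lemma:ced} produces the single estimate
\begin{equation*}
\beta_{\iota\varphi+\psi}(P)\leq (1+\iota)\,\beta_\psi(P),
\end{equation*}
from which both claims follow at once: under $\beta_\psi(P)<\delta/(1+\iota)$ the right-hand side is $<\delta$, giving \eqref{pre-Tex}; under $\iota\leq 1-\beta_\psi(P)$ it is at most $(2-\beta_\psi(P))\beta_\psi(P)=1-(1-\beta_\psi(P))^2$, giving \eqref{Tex}. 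The only potential pitfall would be trying to decouple the two cost terms via a (false) sub-additivity of $D_\phi$ as a functional of $\phi$; the pointwise sandwich cleanly sidesteps this, so no genuine obstacle arises.
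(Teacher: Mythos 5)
Your proof is correct and follows essentially the same route as the paper's: reduce each bound to the Dirac level via Lemma \ref{lemma:ced}, use an optimal coupling together with Jensen's inequality for the power estimate, and derive both \eqref{pre-Tex} and \eqref{Tex} from the single intermediate bound $\beta_{\iota\varphi+\psi}(P)\leq(1+\iota)\beta_{\psi}(P)$. The only cosmetic difference is that the paper proves \eqref{compCost} directly for general measures $\mu_1,\mu_2$ without passing through Lemma \ref{lemma:ced}, which changes nothing of substance.
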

\begin{proof}
See Section \ref{klem-proof}.
\end{proof}

\subsection{A strict contraction theorem\label{sub:strict}}

Theorem  \ref{th1-intro}   is the first contraction result of the article. It is expressed in terms of the semi-distances $(\varphi_{\rho},\phi_{\rho}, \kappa_{\iota,\rho})$  and the parameters $(\delta_{\epsilon}(r),\rho_{\epsilon}(r))$ defined in (\ref{def-phi-rho}) and (\ref{def-delta}).

For notational convenience it assumes hat  \ref{H1} is met with $c=1/2$, but this condition is not restrictive. Indeed, if \ref{H1} holds with $c>1/2$ then the function $\overline{V}:=1/2 +\big( \epsilon /(2c)\big)V$   satisfies \ref{H1}  with $c=1/2$. Moreover, like $V$, the function $\overline{V}$ is  l.s.c.~and bounded below away from zero,  and $\phi$ is a $V$-semi-distance if and only if $\phi$ is a $\overline{V}$-semi-distance.

\begin{theo}\label{th1-intro}
Assume that \ref{H1} is met with $c=1/2$. Then,
\begin{itemize}
\item if \ref{H2} is  met for some $V$-semi-distance  $\kappa$, for any $r>r_{\epsilon}\vee r_0$  and $\iota\in [1/2,1[$   we have  the contraction estimate    \begin{equation}\label{re-upsilon}   
\beta_{\kappa_{\iota,\rho_\epsilon(r)}}(P)\leq 
\Big(1-\alpha(r)\min\big\{\delta_{\epsilon}(r)/2,\alpha(r)\big\}\Big)^{1-\iota}.
\end{equation}

\item if \ref{H2}  is  met for the discrete distance  $\kappa=\varphi_0$, for any $r>r_{\epsilon}\vee r_0$  we have  the contraction estimate 
\begin{equation}\label{re-3}
 \beta_{\varphi_{\rho_{\epsilon}(r)}}(P)\leq 
1-\delta_{\epsilon}(r)~\frac{\alpha(r)}{2}.
\end{equation}
Moreover, for any $V$-semi-distance  $\phi$  such that  $\|\phi/\varphi_V\|\leq \rho_\epsilon(r)\big(1-\beta_{\varphi_{\rho_{\epsilon}(r)}}(P)\big)$, we have  the contraction estimate
\begin{align}\label{re-3-cor}
 \beta_{\phi_{\rho_{\epsilon}(r)}}(P)\leq 1-\left(1-\beta_{\varphi_{\rho_{\epsilon}(r)}}(P)\right)^2.
\end{align}
\end{itemize}
\end{theo}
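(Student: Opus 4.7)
The starting point for all three estimates is Lemma~\ref{lemma:ced}, which reduces $\beta_\phi(P)$ to a supremum of ratios $D_\phi(\delta_xP,\delta_yP)/\phi(x,y)$ over $x\neq y$. I would split the analysis according to whether $v:=\varpi_V(x,y)\leq r$ (the ``local'' regime, controlled by \ref{H2}) or $v>r$ (the ``far'' regime, controlled by the drift \ref{H1}); the calibration $\rho=\rho_\epsilon(r)$ is precisely what makes the two regimes yield comparable contraction rates.

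For \eqref{re-upsilon}, I would take $\pi^*$ to be the $\kappa$-optimal coupling of $(\delta_xP,\delta_yP)$; since $\pi\mapsto\pi(\varpi_\rho)=1+\rho(P(V)(x)+P(V)(y))$ depends only on the marginals, H\"older's inequality with exponents $1/\iota$ and $1/(1-\iota)$ gives
\[
D_{\kappa_{\iota,\rho}}(\delta_xP,\delta_yP)\;\leq\;\pi^*(\kappa^{\iota}\varpi_\rho^{1-\iota})\;\leq\;D_\kappa(\delta_xP,\delta_yP)^{\iota}\bigl(1+\rho(P(V)(x)+P(V)(y))\bigr)^{1-\iota}.
\]
In the local regime, \ref{H2} with parameter $r$ controls the first factor by $(1-\alpha(r))\kappa(x,y)$, while a monotonicity check on $h(v):=(1+\rho+\rho\epsilon v)/(1+\rho v)$ using \ref{H1} with $c=1/2$ bounds the second by $h(0)=1+\rho_\epsilon(r)$. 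In the far regime, \ref{H2} with the adaptive parameter $r'=v$ (valid since $v>r\geq r_0$) instead yields the universal 1-Lipschitz estimate $D_\kappa(\delta_xP,\delta_yP)\leq(1-\alpha(v))\kappa(x,y)\leq\kappa(x,y)$, while the same function $h$ is bounded by $1-\delta_\epsilon(r)\alpha(r)/2$---and it is this computation that fixes the definition of $\rho_\epsilon(r)$. The ratio is therefore at most $(1-\alpha(r))^\iota(1+\rho_\epsilon(r))^{1-\iota}$ locally and $(1-\delta_\epsilon(r)\alpha(r)/2)^{1-\iota}$ far. The hypothesis $\iota\geq 1/2$ enters via $(1-\alpha(r))^{\iota/(1-\iota)}\leq 1-\alpha(r)$, which recasts the local bound as $[(1-\alpha(r))(1+\rho_\epsilon(r))]^{1-\iota}$; elementary algebra using $\rho_\epsilon(r)\leq\alpha(r)/2$ and $\delta_\epsilon(r)\alpha(r)/2\leq 1/2$ then shows that both base quantities are dominated by $1-\alpha(r)\min\{\delta_\epsilon(r)/2,\alpha(r)\}$.

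The proof of \eqref{re-3} follows the same two-regime scheme but replaces H\"older with an explicit coupling in the local regime: since \ref{H2} for $\kappa=\varphi_0$ is equivalent to \eqref{loc-min}, I would decompose $\delta_xP=\alpha(r)\nu_r+(1-\alpha(r))\widetilde{\mu}_x$ (and similarly for $y$) and form $\pi=\alpha(r)\int\nu_r(dz)\delta_{(z,z)}+(1-\alpha(r))\widetilde{\mu}_x\otimes\widetilde{\mu}_y$; since $\varphi_\rho$ vanishes on the diagonal, $\pi(\varphi_\rho)\leq(1-\alpha(r))+\rho(P(V)(x)+P(V)(y))$, and a short algebraic check relying on $\rho_\epsilon(r)+\delta_\epsilon(r)\alpha(r)/2\leq\alpha(r)$ delivers the ratio bound. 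In the far regime, the pointwise inequality $\varphi_\rho\leq\varpi_\rho$ combined with the monotonicity calculation performed for \eqref{re-upsilon} gives the same factor $1-\delta_\epsilon(r)\alpha(r)/2$. Finally, \eqref{re-3-cor} is obtained at once from \eqref{re-3} and \eqref{Tex} of Lemma~\ref{klem} with $\eta:=\|\phi/\varphi_V\|$, $\psi:=\varphi_{\rho_\epsilon(r)}$, $\varphi:=(\rho_\epsilon(r)/\eta)\phi$ and $\iota:=\eta/\rho_\epsilon(r)$: one has $\varphi\leq\psi$ from $\phi\leq\eta\varphi_V$ and $\rho_\epsilon(r)\varphi_V\leq\varphi_{\rho_\epsilon(r)}$ (via \eqref{VVr}), $\iota\varphi+\psi=\phi_{\rho_\epsilon(r)}$ by construction, and the standing hypothesis translates exactly to $\iota\leq 1-\beta_\psi(P)$.

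The main obstacle is the far-regime analysis of \eqref{re-upsilon}: \ref{H2} does not directly provide $\kappa$-contraction outside $\{\varpi_V\leq r\}$, and a naive product coupling would only bound $\pi(\kappa)$ in terms of $\varpi_V(x,y)$ rather than $\kappa(x,y)$. The key unblocking observation is that \ref{H2} applied with the adaptive parameter $r'=v$ produces the universal bound $D_\kappa(\delta_xP,\delta_yP)\leq\kappa(x,y)$, which combined with the exponent restriction $\iota\geq 1/2$ suffices to absorb the small expansion contributed by the $\varpi_\rho$ factor in the local regime.
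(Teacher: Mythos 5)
Your proposal is correct and follows essentially the same route as the paper: reduction to Dirac ratios via Lemma~\ref{lemma:ced}, a two-regime split on $\varpi_V(x,y)\lessgtr r$, H\"older's inequality applied to the $\kappa$-optimal coupling together with the observation that $\beta_\kappa(P)\leq 1$ (via the adaptive use of \ref{H2}) for \eqref{re-upsilon}, and \eqref{Tex} of Lemma~\ref{klem} for \eqref{re-3-cor}. The only cosmetic differences are that the paper bounds the local regime of \eqref{re-3} through the $V_{\rho}$-norm identity \eqref{kr2} rather than an explicit minorization coupling, and uses the slightly looser constant $1+\alpha(r)$ in place of your $1+\rho_\epsilon(r)$ in the local regime of \eqref{re-upsilon}; both lead to the same final estimates.
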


\begin{proof}
See Section \ref{p-th1-intro}
\end{proof}

The contraction estimate for the  weighted discrete metric  $\varphi_{\rho}$ stated in  \eqref{re-3}  is a slight variation of a contraction estimate presented in~\cite{ado-24} (itself based on the weighted norm contraction analysis presented in~\cite{dhj,dm-penev-2017}, see also Lemma 2.5 in~\cite{ado-24}).  To the best of our knowledge the other contraction estimates presented in Theorem \ref{th1-intro} are new.

A first merit of the strict contraction estimates given in Theorem \ref{th1-intro} is to allow to easily show that $P$ has a single invariant distribution $\mu=\mu P\in\mathcal{P}_V(S)$.  Before stating the next result we recall that $(S,\psi_S)$ is assumed to be a complete separable metric space.

\begin{cor}\label{cor:invariant}
Assume that \ref{H1} holds and that \ref{H2} is met either for the $V$-semi-distance   $\kappa=\varphi_0$ or for some $V$-semi-distance  $\kappa$ such that $\|\psi_S^p/\kappa\|<\infty$ for some $p\geq 1$. Then,  $P$ admits a unique invariant distribution $\pi\in\mathcal{P}_V(S)$.
\end{cor}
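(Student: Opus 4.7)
The plan is to read off both assertions from the strict contraction supplied by Theorem~\ref{th1-intro}. Pick $r>r_{0}\vee r_{\epsilon}$, set $\rho:=\rho_{\epsilon}(r)$, and let $\phi^*$ denote the $V$-semi-distance $\varphi_{\rho}$ in the case $\kappa=\varphi_0$ and $\kappa_{\iota,\rho}$ (for some arbitrarily fixed $\iota\in[1/2,1[$) in the case $\|\psi_S^p/\kappa\|<\infty$. In both scenarios Theorem~\ref{th1-intro} yields a constant $\lambda\in[0,1[$ with $\beta_{\phi^*}(P)\leq\lambda$. Since $\phi^*$ is a $V$-semi-distance, $D_{\phi^*}$ is finite on $\mathcal{P}_V(S)^2$ and vanishes only on the diagonal. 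Uniqueness is then immediate: for invariant $\pi_1,\pi_2\in\mathcal{P}_V(S)$ the identity $D_{\phi^*}(\pi_1,\pi_2)=D_{\phi^*}(\pi_1P,\pi_2P)\leq\lambda\,D_{\phi^*}(\pi_1,\pi_2)$ combined with $\lambda<1$ forces $D_{\phi^*}(\pi_1,\pi_2)=0$, hence $\pi_1=\pi_2$.

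For existence I would fix an arbitrary $\mu_0\in\mathcal{P}_V(S)$, set $\mu_n:=\mu_0P_n$, and use~\ref{H1} to obtain $\sup_n\mu_n(V)\leq K:=\mu_0(V)+c/(1-\epsilon)<\infty$. Sub-multiplicativity of $\beta_{\phi^*}$, which follows from~\eqref{eq:prodPK}, together with the crude bound $D_{\phi^*}(\mu_0,\mu_k)\leq 2K\|\phi^*/\varpi_V\|$ then yields $D_{\phi^*}(\mu_n,\mu_{n+k})\leq 2K\|\phi^*/\varpi_V\|\lambda^n$, so that $(\mu_n)_{n\geq 0}$ is Cauchy in $D_{\phi^*}$. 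In the case $\phi^*=\varphi_{\rho}$ we have $D_{\phi^*}=\|\cdot\|_{V_{\rho}}$, and since $\mathcal{P}_V(S)$ is a closed subset of the Banach space of signed measures with finite $V_{\rho}$-norm, the Cauchy property immediately provides a limit $\pi\in\mathcal{P}_V(S)$ with $\|\mu_n-\pi\|_{V_{\rho}}\to 0$; because $V_{\rho}\geq 1/2$ this in particular implies $\mu_n\to\pi$ weakly.

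The main obstacle is the case $\phi^*=\kappa_{\iota,\rho}$, in which $D_{\phi^*}$ is not known to satisfy a triangle inequality and completeness must be established indirectly. I would exploit the bounds $\psi_S^p\leq\|\psi_S^p/\kappa\|\,\kappa\leq\|\psi_S^p/\kappa\|\,\|\kappa/\varpi_V\|\,\varpi_V$ and $\varpi_{\rho}\geq 1$ to deduce $\kappa_{\iota,\rho}\geq c'\,\psi_S^{p\iota}$ with $c':=\|\psi_S^p/\kappa\|^{-\iota}$, together with the uniform estimate $\sup_n\mu_n(\psi_S(\cdot,x_0)^{p\iota})<\infty$ for any fixed $x_0\in S$. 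Consequently $(\mu_n)$ is also Cauchy in the semi-distance $D_{\psi_S^{p\iota}}$, which up to a power coincides with a genuine Wasserstein distance on the complete separable metric space $(S,\psi_S)$ and is therefore complete on the relevant moment subset. This delivers a weak limit $\pi\in\mathcal{P}(S)$, and Fatou applied to the l.s.c.~function $V$ gives $\pi(V)\leq\liminf_n\mu_n(V)\leq K$, so $\pi\in\mathcal{P}_V(S)$.

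It remains in both cases to check that $\pi$ is $P$-invariant. The joint lower semi-continuity of $D_{\phi^*}$ in the weak topology on $\mathcal{P}(S)^2$, which is inherited from the l.s.c.~of $\phi^*$ as in Theorem~4.1 of~\cite{Villani}, combined with $\mu_m\to\pi$ weakly and the Cauchy estimate, upgrades the convergence to $D_{\phi^*}(\mu_n,\pi)\leq\liminf_m D_{\phi^*}(\mu_n,\mu_m)\leq 2K\|\phi^*/\varpi_V\|\lambda^n\to 0$. Applying the strict contraction then gives $D_{\phi^*}(\mu_{n+1},\pi P)=D_{\phi^*}(\mu_nP,\pi P)\leq\lambda\,D_{\phi^*}(\mu_n,\pi)\to 0$, and a further use of the same l.s.c.~property yields $D_{\phi^*}(\pi,\pi P)\leq\liminf_n D_{\phi^*}(\mu_{n+1},\pi P)=0$. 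Since $D_{\phi^*}$ separates points of $\mathcal{P}_V(S)$, this forces $\pi P=\pi$ and concludes the proof.
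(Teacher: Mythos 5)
Your proof is correct and follows essentially the same route as the paper's: deduce a strict contraction $\beta_{\phi^*}(P)\leq\lambda<1$ from Theorem~\ref{th1-intro}, show that the orbit $(\mu_0P_n)_{n\geq 0}$ is Cauchy in a complete weighted--total-variation or Wasserstein metric, pass to the limit, and obtain uniqueness from the contraction. The differences are in the execution and mostly work in your favour: the paper disposes of the case $\kappa=\varphi_0$ by citing Theorem~8.2.21 of \cite{dm-penev-2017}, while you rerun the same Banach fixed-point scheme in the $V_{\rho}$-norm; the paper identifies the limit as invariant by asserting continuity of $\mu\mapsto\mu P$ on $(\mathcal{P}_{\psi_S^p}(S),\WW_{\psi_S,p})$, whereas you use lower semicontinuity of $D_{\phi^*}$ along weakly convergent sequences combined with the contraction, which is self-contained; and you verify $\pi(V)<\infty$ by Fatou, a point the paper leaves implicit. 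Two small repairs are needed. First, Theorem~\ref{th1-intro} is stated under the normalization $c=1/2$ in \ref{H1}, so you should record (as the paper does) that replacing $V$ by $\overline{V}=1/2+\big(\epsilon/(2c)\big)V$ is harmless, since it changes neither $\mathcal{P}_V(S)$ nor the class of $V$-semi-distances. Second, the joint lower semicontinuity of $(\mu,\nu)\mapsto D_{\phi^*}(\mu,\nu)$ for a l.s.c.\ cost is not Theorem~4.1 of \cite{Villani} (that is the existence of optimal plans); it follows from the standard tightness-plus-lower-semicontinuity argument (compactness of $\Pi(\mu,\nu)$ via Lemma~4.4 of \cite{Villani} and Prokhorov, then portmanteau for the l.s.c.\ integrand), which is the same device the paper uses in the proof of Lemma~\ref{lemma:ced}. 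Neither point affects the validity of the argument.
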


\begin{proof}
The fact that the  contraction estimate  \eqref{re-3}   implies that $P$ has a single invariant distribution is known  (see~Theorem 8.2.21 in \cite{dm-penev-2017}), and thus below we only prove the second part of the corollary. To simplify notation in what follows we let   $\psi=\psi_S$. In addition, following  the remark made at the  of this subsection, to prove the corollary we can without loss of generality assume that \ref{H1} holds with $c=1/2$.

Let $\mathcal{P}_{\psi^p}(S)$ denote the subset of probability measures $\mu\in \mathcal{P}(S)$ such that $\mu(\psi^p(\cdot,y))<\infty$ for some and hence all given $y\in S$. Next, let $\mu\in\mathcal{P}_{V}(S)\subseteq \mathcal{P}_{\psi^p}(S)$, $r>r_0\vee r_\epsilon$ and  $\iota\in [1/2,1[$. Then, by using the first part of Theorem \ref{th1-intro}, there exists a constant $\lambda\in ]0,1[$ such that
\begin{align*}
D_{\kappa_{\iota,\rho_\epsilon(r)}}\Big(\mu  P_n,\mu P_{n-1}\Big)\leq \lambda^n D_{\kappa_{\iota,\rho_\epsilon(r)}}(\mu  P,\mu),\quad\forall n\geq 1.
\end{align*}
Under the assumptions imposed on the semi-distance function $\kappa$  we have
\begin{align*}
\ \psi^p\leq  \Big\|\frac{\psi^p}{\kappa}\Big\|~\kappa\leq  \Big\|\frac{\psi^p}{\kappa}\Big\|\,\Big\|\frac{\kappa}{\varpi_V}\Big\|~\varpi_V\leq \frac{1}{\rho}\Big\|\frac{\psi^p}{\kappa}\Big\|\,\Big\|\frac{\kappa}{\varpi_V}\Big\|\varpi_{\rho},\quad\forall \rho>0
\end{align*} 
where the last inequality holds by \eqref{VVr}. This shows that there exists a constant $c>0$ such that $c~\psi^p\leq  \kappa_{\iota,\rho_\epsilon(r)}$, and thus
\begin{align}\label{eq:cauchy}
\WW_{\psi,p}\left(\mu  P_n,\mu  P_{n-1}\right)^p=D_{\psi^p}\left(\mu  P_n,\mu  P_{n-1}\right)\leq c^{-1} \lambda^n D_{\kappa_{\iota,\rho_\epsilon(r)}} (\mu P,\mu ),\quad\forall n\geq 1.
\end{align}
We can now conclude that the mapping $\mu\in \mathcal{P}_{\psi^p}(S)\mapsto \mu P$ admits a unique fixed point  $\pi\in \mathcal{P}_{\psi^p}(S)$ by following the arguments used to prove the Banach fixed point theorem. More precisely, using \eqref{eq:cauchy} we can easily check that the sequence $(\mu P_n)_{n\geq 1}$ is Cauchy in $(\mathcal{P}_{\psi^p}(S), \WW_{\psi,p})$ and thus, since this space  is complete (see e.g.~Theorem 20.1.8 in \cite{douc}), the sequence $(\mu P_n)_{n\geq 1}$ admits a limit $\pi\in \mathcal{P}_{\psi^p}(S)$. The measure $\pi$ is a fixed point of the mapping $\mu\in \mathcal{P}_{\psi^p}(S) \mapsto \mu P$  since this mapping is continuous on  $(\mathcal{P}_{\psi^p}(S),  \WW_{\psi,p})$. To show that the fixed point $\pi$ is unique assume that there exists another measure $\pi'\in \mathcal{P}_{\psi^p}(S)$ such that $\pi'P=\pi'$. In this scenario, for all $n\geq 2$ we have
$$
D_{\kappa_{\iota,\rho_\epsilon(r)}}\left(\pi,\pi'\right)= D_{\kappa_{\iota,\rho_\epsilon(r)}}\left(\pi P_n,\pi'P_{n-1}\right)= D_{\kappa_{\iota,\rho_\epsilon(r)}}\left(\pi P_{n-1},\pi'P_{n-1}\right)\leq \lambda D_{\kappa_{\iota,\rho_\epsilon(r)}}\left(\pi,\pi'\right)
$$
and thus, since $\lambda<1$, we must   have $D_{\kappa_{\iota,\rho_\epsilon(r)}}\left(\pi,\pi'\right)=0$. Since $\kappa_{\iota,\rho_\epsilon(r)}$ is a $V$-semi-distance, it follows that $\pi=\pi'$, which concludes to show that the mapping $\mu\in \mathcal{P}_{\psi^p}(S) \mapsto \mu P$ has a unique fixed point and thus that $P$ admits a unique invariant distribution $\pi$.  This ends the proof of the corollary.
  
\end{proof}

Last but not least, consider Markov integral operators $(K,L)$ and some  l.s.c.~function $W$, lower bounded away from zero and satisfying \eqref{Lyap-KL} for some
 constants $\epsilon\in ]0,1[$ and $c\geq 0$. Also assume there exists  a map $\alpha:r\in [r_0,\infty[\mapsto \alpha(r)\in ]0,1[$, for some  $r_0\geq 1$, such that for any $\varpi_W(x_1,x_2)\leq r$ and $\varpi_V(y_1,y_2)\leq r$ we have
\begin{equation}\label{loc-contracKL}
\Vert \delta_{x_1} K-\delta_{x_2} K\Vert_{\tiny tv}\vee \Vert \delta_{y_1} L-\delta_{y_2} L\Vert_{\tiny tv}\leq 1-\alpha(r).
\end{equation}
 Following the discussion  just above Theorem \ref{th1-intro},  we assume without loss of generality    that \eqref{Lyap-KL}  and thus \eqref{loc-contracKL} holds with
$
W\wedge V\geq 1/2
$ and $c=1/2$. 
In this context, the Markov $V$-operator 
  $P:=KL$  satisfies \ref{H1} and \ref{H2}  for the discrete distance  $\kappa=\varphi_0$ so that the contraction estimate \eqref{re-3} is satisfied.  However, as we now show, for this specific definition of $P$ a more precise estimate can be obtained.  Following word-for-word the proof of \eqref{re-3} we check that
  $$
  \cchi_{V_{\rho_{\epsilon}(r)},W_{\rho_{\epsilon}(r)}}(K)\vee
\cchi_{W_{\rho_{\epsilon}(r)},V_{\rho_{\epsilon}(r)}}(L)\leq 1-\delta_{\epsilon}(r)~\frac{\alpha(r)}{2} 
  $$
  where the function $W_{\rho}$ is defined as $V_{\rho}$ by replacing $V$ by $W$.
 Then, by using \eqref{eq:prodPK}, we conclude that
  \begin{equation}\label{pre-gibbs}
  \eqref{Lyap-KL}\quad\mbox{\rm and}\quad \eqref{loc-contracKL}\Longrightarrow
   \beta_{\varphi_{\rho_{\epsilon}(r)}}(P)\leq \left(1-\delta_{\epsilon}(r)~\frac{\alpha(r)}{2}\right)^2
  \end{equation}
  with $\varphi_{\rho}$ defined in (\ref{vero}) and (\ref{def-phi-rho}).

\subsection{Exponential decay of $\phi$-Dobrushin  coefficients}

Theorem \ref{th1-intro} introduces a class of   $V$-semi-distances $\mathcal{F}_1$ such that, for any $\phi\in\mathcal{F}_1$, there is a constant $\lambda\in ]0,1[$ for which $\beta_\phi(P_{n})\leq \lambda ^n$ for all $n\geq 0$. The goal of   this subsection is to establish that an exponential decay of $\beta_\phi(P_n)$ holds for a larger class of $V$-semi-distances. Using   the comparison results stated in Lemma \ref{klem}, this goal can be easily   achieved from the conclusions of Theorem \ref{th1-intro}. 

 Specifically, we obtain the following result, expressed in terms of the semi-distances  $(\varphi_{V},\phi_{V}, \kappa_{\iota,V})$    introduced in (\ref{refvarpiV}) and (\ref{eq:kappa_iota}).

 \begin{theo}\label{th1-intro-cor}
 Assume that \ref{H1} holds. Then,
\begin{itemize}
\item  if \ref{H2} is  met for some $V$-semi-distance   $\kappa$,  for any  $\iota\in[1/2,1[$ there exist constants  $c\geq 0$ and $\lambda\in ]0,1[$   such that
\begin{equation}\label{res-H0-psi}
\beta_{\kappa_{\iota,V}}(P_n)\leq c\,\lambda^n,\quad\forall n\geq 0.
\end{equation}
\item  if \ref{H2} is met for the $V$-semi-distance   $\kappa=\varphi_0$, 
 there exist constants
  $c\geq 0$ and $\lambda\in ]0,1[$ such that 
\begin{equation}\label{res-H0}
\beta_{\varphi_V}(P_n)\leq c\,\lambda^n,\quad\forall n\geq 0.
\end{equation}
Moreover, for any $V$-semi-distance $\phi$,  there exist constants  $c\geq 0$ and $\lambda \in ]0,1[$  such that  
\begin{equation}\label{res-H1}
\beta_{\phi_V}(P_n)\leq c \,\lambda^n,\quad\forall n\geq 0.
\end{equation}
   \end{itemize}
\end{theo}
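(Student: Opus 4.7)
The plan is to bootstrap from the single-step strict contraction estimates of Theorem \ref{th1-intro}---which concern the \emph{rescaled} semi-distances $\kappa_{\iota,\rho_\epsilon(r)}$, $\varphi_{\rho_\epsilon(r)}$ and $\phi_{\rho_\epsilon(r)}$---to exponential decay of $\beta_\varphi(P_n)$ for the $V$-indexed semi-distances $\kappa_{\iota,V}$, $\varphi_V$ and $\phi_V$ appearing in the statement. Two general ingredients are used repeatedly. First, iterating \eqref{eq:prodPK} with $\phi=\psi=\varphi$ and $(K,L)=(P,P_{n-1})$ yields the sub-multiplicativity $\beta_\varphi(P_n)\leq \beta_\varphi(P)^n$ for any $V$-semi-distance $\varphi$. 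Second, the sandwich principle \eqref{compCost} of Lemma \ref{klem} converts a two-sided comparison $a\varphi\leq \psi\leq b\varphi$ between two semi-distances into a factor $b/a$ between the corresponding Dobrushin coefficients of any Markov $V$-operator, and in particular of $P_n$. As in Theorem \ref{th1-intro}, by rescaling $V$ via the substitution recalled in the remark preceding that theorem, we may assume throughout that \ref{H1} holds with $c=1/2$.

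For \eqref{res-H0-psi} and \eqref{res-H0}, fix $r>r_0\vee r_\epsilon$, write $\rho=\rho_\epsilon(r)$, and raise \eqref{VVr} to the power $1-\iota\in[0,1/2]$. Multiplying by $\kappa^\iota$ gives the sandwich
\begin{equation*}
\rho^{1-\iota}\,\kappa_{\iota,V}\;\leq\;\kappa_{\iota,\rho}\;\leq\;(1+\rho)^{1-\iota}\,\kappa_{\iota,V},
\end{equation*}
and combining \eqref{compCost} (applied to $P_n$) with sub-multiplicativity and the contraction estimate \eqref{re-upsilon} yields
\begin{equation*}
\beta_{\kappa_{\iota,V}}(P_n)\;\leq\; \left(\frac{1+\rho}{\rho}\right)^{1-\iota}\beta_{\kappa_{\iota,\rho}}(P_n)\;\leq\;\left(\frac{1+\rho}{\rho}\right)^{1-\iota}\beta_{\kappa_{\iota,\rho}}(P)^n,
\end{equation*}
which is \eqref{res-H0-psi}. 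An identical argument with $(\varphi_V,\varphi_\rho)$ in place of $(\kappa_{\iota,V},\kappa_{\iota,\rho})$---using \eqref{VVr} directly without an exponent and invoking \eqref{re-3}---yields \eqref{res-H0}.

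For \eqref{res-H1} the mild obstacle is that \eqref{re-3-cor} is only stated under the constraint $\|\phi/\varphi_V\|\leq \rho_\epsilon(r)\bigl(1-\beta_{\varphi_{\rho_\epsilon(r)}}(P)\bigr)$, which for an arbitrary $V$-semi-distance $\phi$ need not hold. The fix is to rescale: assuming $\phi\not\equiv 0$, pick a constant $c>0$ small enough so that $\tilde\phi:=c\phi$ is a $V$-semi-distance satisfying that constraint (this is possible since $\|\phi/\varphi_V\|<\infty$ and the right-hand side is strictly positive by \eqref{re-3}), apply \eqref{re-3-cor} to $\tilde\phi$ to obtain strict contraction of $\tilde\phi_{\rho_\epsilon(r)}=c\phi+\varphi_{\rho_\epsilon(r)}$ under $P$, and then invoke \eqref{VVr} once more to produce constants $m,M>0$ with $m\,\phi_V\leq \tilde\phi_{\rho_\epsilon(r)}\leq M\,\phi_V$. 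Feeding this sandwich into \eqref{compCost} together with sub-multiplicativity concludes the proof of \eqref{res-H1}. The only substantive step throughout is the rescaling trick just described; the rest consists of routine passages between rescaled and non-rescaled semi-distances via \eqref{VVr} and \eqref{compCost}.
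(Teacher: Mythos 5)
Your proposal is correct and follows essentially the same route as the paper's proof: reduce to $c=1/2$, sandwich the $V$-indexed semi-distances between multiples of their $\rho$-rescaled versions via \eqref{VVr}, and combine \eqref{compCost} with sub-multiplicativity and the single-step estimates \eqref{re-upsilon}, \eqref{re-3} and \eqref{re-3-cor}. Your rescaling trick for \eqref{res-H1} (replacing $\phi$ by $c\phi$ with $c$ small) is exactly the paper's device of introducing the parameter $\iota\in[0,\rho]$ in the comparison $\iota(\varphi_V+\phi)\leq \varphi_\rho+\iota\phi\leq ((1+\rho)\vee\iota)(\varphi_V+\phi)$.
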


\begin{proof}

Using \eqref{eq:Use_BB} and the remark made just before Theorem \ref{th1-intro}, to prove the theorem we can without loss of generality assume that \ref{H1} holds with $c=1/2$.

To prove \eqref{res-H0-psi} let $\iota\in[1/2,1]$ and remark first that, using \eqref{VVr}, for any $\rho\in]0,1[$ we have
\begin{align*}
\rho^{1-\iota}~\kappa_{\iota,V}=\rho^{1-\iota}~\varpi_{V}^{1-\iota}\kappa^\iota\leq \kappa_{\iota,\rho}=
\varpi_{\rho}^{1-\iota}\kappa^{\iota}\leq (1+\rho)^{1-\iota}~\varpi_{V}^{1-\iota}\kappa^\iota= (1+\rho)^{1-\iota}~\kappa_{\iota,V},
\end{align*}
and thus, by using the result \eqref{compCost} presented in Lemma \ref{klem}, 
\begin{align*}
\beta_{\kappa_{\iota,V}}(P_{n})\leq (1+1/\rho)^{1-\iota}~
\beta_{\kappa_{\iota,\rho}}(P)^n,\quad\forall n\geq 1,\quad\forall \rho\in]0,1[,\quad\forall r\geq r_0.
\end{align*}
Then,  \eqref{res-H0-psi} follows from result \eqref{re-upsilon} of Theorem \ref{th1-intro}. Next, by using \eqref{VVr} and \eqref{compCost}, we have 
\begin{align*}
\beta_{\varphi_V}(P_{n}) \leq (1+1/\rho) ~\beta_{\varphi_{\rho}}(P)^n,\quad\forall n\geq 1,\quad\forall \rho>0,\quad\forall r\geq r_0
\end{align*}
and \eqref{res-H0} follows from result \eqref{re-3} of Theorem \ref{th1-intro}. Finally, to show \eqref{res-H1} remark that for any constants $\rho>0$ and $\iota\in [0,\rho]$ we have, using    \eqref{VVr},
$$
 \iota ~\big(\varphi_V+\phi)\leq
\varphi_{\rho}+  \iota \phi\leq ((1+\rho)\vee\iota)\big(\varphi_V+\phi)
$$
and thus, by \eqref{compCost}, for any constants $\rho>0$ and $\iota\in [0,\rho]$ we have
\begin{align*}
\beta_{\phi+\varphi_{V}}(P_{n}) \leq  \frac{((1+\rho)\vee\iota)}{(\rho\wedge\iota)} 
~\beta_{\iota \phi+\varphi_{\rho}}(P)^n,\quad\forall n\geq 1,\quad\forall r\geq r_0.
\end{align*}
Then, \eqref{res-H1} follows from result \eqref{re-3-cor} of Theorem \ref{th1-intro}. This ends the proof of the theorem.
\end{proof}

\subsection{Exponential decay of the  $(\varphi_V,\phi)$-Dobrushin contraction coefficient}

The previous Theorem \ref{th1-intro-cor} introduces a class of   $V$-semi-distances   $\mathcal{F}_2$ such that, for any $\phi\in\mathcal{F}_2$, the  $V$-Dobrushin contraction coefficient $\beta_\phi(P_n)$ converges to zero exponentially fast as $n\rightarrow\infty$. By using  the contraction estimates provided in this theorem, as well as the trivial observation that for any $V$-semi-distances $\varphi$ and $\phi$ we have
\begin{align*}
\varphi\leq\phi\implies D_{\varphi}(\mu_1,\mu_2)\leq D_{\phi}(\mu_1,\mu_2), 
\end{align*}
we  can prove an exponential decay of the  $(\varphi_V,\phi)$-Dobrushin contraction coefficient $\beta_{\varphi_V,\phi}(P_n)$ for a larger class of $V$-semi-distances.

\begin{theo}\label{thm:expo}

Assume that   \ref{H1} is met and that \ref{H2} holds for some $V$-semi-distance  $\kappa$. Then, for   any $\iota\in ]1/2,1]$, there exist constants $c>0$ and $\lambda\in ]0,1[$ such that
\begin{align}\label{eq:beta_varV}
\beta_{\varphi_V,\kappa_{\iota,V}}(P_n)\leq c~  \lambda^n,\quad\forall n\geq 0.
\end{align}
with the semi-distance $\kappa_{\iota,V}$ as defined in \eqref{eq:kappa_iota}.  If, in addition, we have  $\beta_{\kappa,\varphi_0}(P)<\infty$, then, for any $V$-semi-distance $\phi$,  there exists  a constant  $c'\geq 0$ such that
\begin{align*}
\beta_{\varphi_V,\phi}(P_n)\leq c'~ \lambda^n,\quad\forall n\geq 0.
\end{align*}
with the weighted discrete metric $\varphi_V$ as defined in \eqref{refvarpiV}.

\end{theo}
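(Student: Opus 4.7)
\textbf{For the first statement}, I would decompose $P_n = P\cdot P_{n-1}$ and apply the product inequality \eqref{eq:prodPK} with intermediate semi-distance $\kappa_{\iota',V}$ for some $\iota'\in[1/2,1[$ chosen with $\iota'\le \iota$ (e.g.\ $\iota'=1/2$):
\[
\beta_{\varphi_V,\kappa_{\iota,V}}(P_n) \le \beta_{\varphi_V,\kappa_{\iota',V}}(P)\cdot \beta_{\kappa_{\iota',V},\kappa_{\iota,V}}(P_{n-1}).
\]
The first factor is finite by the comparison $\beta_{\varphi_V,\kappa_{\iota',V}}(P) \le \|\kappa_{\iota',V}/\varphi_V\|\,\beta_{\varphi_V}(P)$, where $\|\kappa_{\iota',V}/\varphi_V\| = \|\kappa/\varpi_V\|^{\iota'}<\infty$ (as $\kappa$ is a $V$-semi-distance) and $\beta_{\varphi_V}(P)<\infty$ under \ref{H1}. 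The second factor is handled similarly: $\beta_{\kappa_{\iota',V},\kappa_{\iota,V}}(P_{n-1}) \le \|\kappa_{\iota,V}/\kappa_{\iota',V}\|\,\beta_{\kappa_{\iota',V}}(P_{n-1})$ with $\|\kappa_{\iota,V}/\kappa_{\iota',V}\| = \|\kappa/\varpi_V\|^{\iota-\iota'}<\infty$ (since $\iota\ge\iota'$), and Theorem~\ref{th1-intro-cor}(i) applied with parameter $\iota'\in[1/2,1[$ gives $\beta_{\kappa_{\iota',V}}(P_{n-1}) \le c\lambda^{n-1}$.

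\textbf{For the second statement}, I first derive the auxiliary bound $\beta_{\varphi_V,\varphi_0}(P_n) \le C_1\lambda^n$. Decomposing $P_n = P_{n-1}\cdot P$ and applying \eqref{eq:prodPK} with intermediate $\kappa$ gives $\beta_{\varphi_V,\varphi_0}(P_n) \le \beta_{\varphi_V,\kappa}(P_{n-1})\cdot \beta_{\kappa,\varphi_0}(P)$, whose second factor is finite by hypothesis; the first is controlled by $\beta_{\varphi_V,\kappa}(P_{n-1}) \le \|\kappa/\kappa_{\iota,V}\|\,\beta_{\varphi_V,\kappa_{\iota,V}}(P_{n-1})$ with $\|\kappa/\kappa_{\iota,V}\| = \|\kappa/\varpi_V\|^{1-\iota}<\infty$, combined with the first statement already proved.

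Next, since $\phi$ is a $V$-semi-distance, $\phi\le\|\phi/\varpi_V\|\,\varphi_V$ everywhere (both sides vanish on the diagonal), hence $D_\phi\le\|\phi/\varpi_V\|\,\|\cdot\|_V$ and $\beta_{\varphi_V,\phi}(P_n)\le\|\phi/\varpi_V\|\,\beta_{\varphi_V}(P_n)$, reducing the task to bounding $\beta_{\varphi_V}(P_n)$. For the latter I split at an intermediate index $k\in\{1,\ldots,n-1\}$ and, writing $\nu = \delta_xP_k - \delta_yP_k$, use the inequality $|\nu P_{n-k}|(V)\le|\nu|(P_{n-k}V)$ combined with the iterated drift $P_{n-k}V\le\epsilon^{n-k}V + c/(1-\epsilon)$ to obtain
\[
\|\delta_xP_n - \delta_yP_n\|_V \le \epsilon^{n-k}\|\delta_xP_k - \delta_yP_k\|_V + \tfrac{2c}{1-\epsilon}\,\|\delta_xP_k - \delta_yP_k\|_{tv}.
\]
The crude drift bound gives $\|\delta_xP_k - \delta_yP_k\|_V\le C_2\varpi_V(x,y)$, while Step~1 yields $\|\delta_xP_k - \delta_yP_k\|_{tv}\le C_3\lambda^k\varpi_V(x,y)$ for $x\ne y$; choosing $k$ of order $n/2$ balances $\epsilon^{n-k}$ against $\lambda^k$ and produces the required exponential decay. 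The main obstacle is precisely this balancing step: without a Lyapunov control on $V^p$ for some $p>1$ one cannot directly interpolate between $V$-norm and total variation, so the splitting at time $k$ is the cleanest substitute and produces a rate of order $\max(\epsilon,\lambda)^{1/2}$ that may be weaker than the $\lambda$ of the first statement but can be absorbed by enlarging $\lambda$ there.
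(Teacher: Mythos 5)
Your proposal is correct. For the first statement your route is essentially the paper's: both reduce to the exponential decay of $\beta_{\kappa_{\iota',V}}(P_n)$ for some $\iota'\in[1/2,1[$ given by Theorem~\ref{th1-intro-cor}, and then transfer it to $\beta_{\varphi_V,\kappa_{\iota,V}}$ via the comparison principles; the paper handles the endpoint $\iota=1$ by sandwiching $\kappa/c_\kappa\leq(\varpi_V\,\kappa/c_\kappa)^{1/2}\leq\varphi_V$, whereas your product decomposition $P_n=P\,P_{n-1}$ with intermediate cost $\kappa_{\iota',V}$ treats all $\iota\in\,]1/2,1]$ uniformly — a cosmetic difference. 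For the second statement you genuinely diverge. The paper uses $\beta_{\kappa,\varphi_0}(P)<\infty$ together with the first part to show that some iterate $P_m$ satisfies the local contraction condition \ref{H2} with $\kappa=\varphi_0$, then reruns the proof of the strict contraction estimate \eqref{re-3} for $P_m$ to get $\beta_{\varphi_\rho}(P_m)<1$ and hence geometric decay of $\beta_{\varphi_V}(P_{nm})$. You instead establish $\beta_{\varphi_V,\varphi_0}(P_k)\leq C_1\lambda^k$ directly and then run a Hairer--Mattingly-type interpolation, $\|\nu P_{n-k}\|_V\leq\epsilon^{n-k}\|\nu\|_V+\tfrac{2c}{1-\epsilon}\|\nu\|_{tv}$ with $\nu=\delta_xP_k-\delta_yP_k$, balancing $k\approx n/2$. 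Both arguments are valid; yours is more self-contained (no need to re-verify that the machinery of \eqref{re-3} applies to $P_m$ with modified constants), while the paper's stays entirely inside its contraction-coefficient calculus and avoids introducing the splitting index. The only points worth making explicit in a write-up are (i) the identity $|\nu Q|(V)\leq|\nu|(Q(V))$ and the normalization $|\nu|(S)=2\|\nu\|_{tv}$ behind your key inequality, and (ii) the harmless enlargement of $\lambda$ so that a single rate serves both statements, which you already flag.
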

\begin{proof}
See Section \ref{p-thm:expo}.
\end{proof}

\begin{rmk}
If $\kappa$ is such that \ref{H2} holds then $\beta_\kappa(P)\leq 1$ and we have $\beta_{\kappa,\varphi_0}(P)<\infty$ if there exists a constant $c>0$ such that $\|\mu_1  -\mu_2\|_{tv}\leq c D_{\kappa}(\mu_1,\mu_2)$ for all $\mu_1,\mu_2\in\mathcal{P}_V(S)$.
\end{rmk}

It is important to remark that, unlike  in the  first part of Theorem \ref{th1-intro-cor}, in the  first part of Theorem \ref{thm:expo} the case $\iota=1$ is allowed. In this scenario, the first part of Theorem \ref{thm:expo} states that
\begin{align}
\text{$\exists$ constants $c>0$ and $\lambda\in ]0,1[$ such that $\beta_{\varphi_V,\kappa}(P_n)\leq c \lambda^n$ for all $n\geq 0$}.
\end{align}   

It is also worth noting that the second part of Theorem \ref{thm:expo} implies that, under \ref{H1} and if  \ref{H2} holds for the discrete metric (that is with $\kappa=\varphi_0$), then, for any $V$-semi-distance $\phi$, the  $(\varphi_V,\phi)$-Dobrushin contraction coefficient
  $\beta_{\varphi_V,\phi}(P_n)$ converges to zero exponentially fast as $n\rightarrow\infty$.

\subsubsection{Application 1: Wasserstein exponential contraction estimates\label{sub:expoWasser}}

As a first application of Theorem \ref{thm:expo} we   show below how it can be used to establish  Wasserstein exponential  contraction estimates. To this aim we assume that $(S,\|\cdot\|)$ is a finite dimensional normed  vector space  and we let  $\psi_S(x,y)=\|x-y\|$. 

Assume first that \ref{H1} holds with  $V$   a polynomial-type Lyapunov function, that is  with $V(x)=\frac{1}{2}+\Vert x\Vert^p$ for all $x\in S$ and for some $p>0$. In this case, as proved in Section \ref{cond-intro-exp-proof}, we have the estimate
\begin{equation}\label{cond-intro-p-norms}
 \varpi_V(x,y)\geq c_p~  \psi_S(x,y)^p
 \quad \mbox{\rm with}\quad c_p:=2^{-(p-1)_+} 
\end{equation}
showing that the function $\psi_S^p$ is a $V$-semi-distance. In this scenario,   Theorem \ref{thm:expo} ensures that if \ref{H2} is met either with a $V$-semi-distance function $\kappa\geq \psi^p_S$  or with $\kappa=\varphi_0$ then there exist  constants $c'_p>0$ and $\lambda_p\in ]0,1[$ such that, for all  $\mu_1,\mu_2\in\mathcal{P}_V(S)$,
\begin{align*}
\WW_{\psi_S,p}\left(\mu_1P_n,\mu_2P_n\right)^p=D_{\psi_S^p}\left(\mu_1P_n,\mu_2P_n\right) \leq c'_p~\lambda_p^n~\|\mu_1-\mu_2\|_V,\quad\forall n\geq 0.
\end{align*}

Assume now that \ref{H1}  holds with $V$ an exponential-type Lyapunov function, that is with $V(x)=\exp{(\delta\Vert x\Vert)}$ for all $x\in S$ and some $\delta>0$. In this case, we show in Section \ref{cond-intro-exp-proof} that
\begin{equation}\label{cond-intro-exp}
\begin{array}{l} 
 \displaystyle\varpi_V(x,y)\geq \varpi_e(x,y)\geq
c_p\,\psi_S(x,y)^p,\quad  \mbox{\rm with}\quad c_p:= \frac{\delta^{p}}{2^{p-1}p!},\quad\forall p\geq 1
 \end{array}\end{equation}
and where the function $\varpi_e$ is defined by
\begin{align}\label{eq:We}
\varpi_e(x,y)=2\left(\exp{\left(\frac{\delta \|x-y\|}{2}\right)}-1\right),\quad x,y\in S.
\end{align}
By \eqref{cond-intro-exp}, the    function  $\varpi_e$ is a $V$-semi-distance, and it follows from Theorem \ref{thm:expo} that if \ref{H2} holds either for some $V$-semi-distance $\kappa\geq \varpi_e$ or with $\kappa=\varphi_0$ then    there exist  constants $c'>0$ and $\lambda\in ]0,1[$ such that, for all $\mu_1,\mu_2\in\mathcal{P}_V(S)$,
\begin{align*}
\WW_{\psi_S,p}\left(\mu_1P_n,\mu_2P_n\right)^p \leq c_p^{-1} D_{\varpi_e}\left(\mu_1P_n,\mu_2P_n\right)  \leq c_p^{-1}\, c' ~\lambda^n\|\mu_1-\mu_2\|_V,\quad\forall n\geq 0,\quad\forall p\geq 1.
\end{align*}

\subsubsection{Application 2: Continuous time Markov semigroups\label{sub:continuous}}

As a second application of Theorem \ref{thm:expo} we   show below how it can be used to obtain exponential contraction estimates for a continuous time Markov semigroup  $(Q_t)_{t\in[0,\infty[}$, which satisfies the semigroup property $Q_{s+t}=Q_sQ_t$ for any $s,t\geq 0$.

Let $\Ga$ denote  the generator of the semigroup,  defined on some domain $\Da(\Ga)$, and assume that there exist  a l.s.c.~function $V\in \Da(\Ga)$ with $V\geq 1/2$ and some parameters $a_0, a_1>0$  such that 
\begin{equation}\label{V-L-ex}
 \Ga(V)\leq -a_0~ V+a_1.
\end{equation}
In this context,  for any $h>0$ we have the following rather well known estimate (see for instance~\cite{ado-24})
\begin{equation}\label{V-L-ex-s}
 Q_h(V)\leq \epsilon_h V+c_h\quad\mbox{\rm with}\quad
\epsilon_{h}=(1+a_0h)^{-1}<1\quad \mbox{\rm and}\quad c_{h}=a_1 h.
\end{equation}
 
We now let $h\in]0,1]$ be fixed and $P=Q_h$. Then,   by \eqref{V-L-ex-s},    condition \ref{H1} is met for  the function   $V$ such that \eqref{V-L-ex} holds. We now assume that, for $P=Q_h$, condition \ref{H2} holds for some semi-distance $\kappa$.  Noting that  $(P_n)_{n\geq 0}=(Q_{nh})_{n\geq 0}$, that is that $(Q_{nh})_{n\geq 0}$ is the discrete generation semigroup of Markov $V$-operators associated with the Markov $V$-operator $P=Q_h$, it follows from Theorem \ref{thm:expo} that there exist constants $c_h>0$ and $\lambda_h\in]0,1[$ such that
\begin{align}\label{eq:V_diff}
\beta_{\varphi_V,\kappa}(Q_{nh})\leq c_h \lambda_h^n,\quad\forall n\geq 0.
\end{align}

To extend the contraction estimate \eqref{eq:V_diff} obtained for the discrete time Markov semigroup $(Q_{nh})_{n\geq 1}$ to the continuous time Markov semigroup $(Q_t)_{t\in[0,\infty[}$ remark first  that 
\begin{align}\label{eq:NT}
Q_t=Q_{h\{t/h\}}Q_{h \lfloor t/h\rfloor}=Q_{h\{t/h\}}P_{\lfloor t/h\rfloor},\quad\forall t>0
\end{align}
 where $\lfloor s\rfloor$ and $\{s\}:=s-\lfloor s\rfloor<1$ stands for the integer part and the fractional part of $s>0$.   In addition,  remark that since for any $t>0$ we have $\{ t/h\}h< h\leq 1$, it follows from \eqref{V-L-ex-s} that
$$
\Vert Q_{\{ t/h\}h}(V)/V\Vert\leq 
\sup_{0\leq h\leq 1}\Vert Q_h(V)/V\Vert<\infty,\quad\forall t>0
$$
implying that there exists a constant $\iota>0$ for which we have
 \begin{align}\label{eq:VQ}
 \|\mu_1 Q_{h\{t/h\}}-\mu_2 Q_{h\{t/h\}}\|_V\leq \iota~\|\mu_1-\mu_2\|_V\quad\forall t>0,\quad \forall\mu_1,\mu_2\in\mathcal{P}_V(S).
 \end{align}
By combining \eqref{eq:V_diff}-\eqref{eq:VQ}, we obtain the following  contraction estimate  for the continuous time Markov semigroup $(Q_t)_{t\in[0,\infty[}$ 
 \begin{align}\label{eq:T}
 \beta_{\varphi_V,\kappa}(Q_t)\leq \iota\,c_h~ \lambda_h^{\lfloor t/h\rfloor},\quad\forall t>0.
 \end{align}
 Equivalently, for any $t>0$ we have the exponential contraction estimate 
 \begin{align}\label{eq:TT}
 c_{\iota,h}:=\iota\,c_h~ \lambda_h^{-1}\Longrightarrow
  \beta_{\varphi_V,\kappa}(Q_t)\leq  c_{\iota,h}~e^{-\varsigma_h t}\quad \mbox{\rm with}\quad \varsigma_h:=-\frac{1}{h}\log{\lambda_h}>0.
 \end{align}
Moreover, if $\beta_{\kappa,\varphi_0}(Q_h)<\infty$ then, by using the second part of Theorem \ref{thm:expo} as well as \eqref{eq:NT}-\eqref{eq:VQ}, we readily obtain that   for any $V$-semi-distance $\phi$   there exist   a  constant   $c'_{\iota,h}\geq 0$ such that, with $\varsigma_h>0$ as in \eqref{eq:TT},
 \begin{align}\label{eq:T2}
 \beta_{\varphi_V,\phi}(Q_t)\leq c'_{\iota,h}~e^{-\varsigma_h t},\quad\forall t>0. 
 \end{align}

\subsection{Discussion}\label{review-sec}

Local Lyapunov techniques w.r.t.~weighted total variation norms go back to the beginning of the 1990s with the pioneering articles by Meyn and Tweedie~\cite{meyn-tweedie-92,meyn-tweedie-93,meyn-tweedie}. They were
further developed in  the articles~\cite{ado-24,dhj,goldys,hairer-mattingly,rudolf-18}  and in the book~\cite{dm-penev-2017}, simplifying the Foster-Lyapunov methodologies and the small-sets return times estimation techniques presented in~\cite{fort-02,meyn-tweedie-92,meyn-tweedie-93,meyn-tweedie,meyn-tweedie-2}. In this context, the present article extends the  operator theoretic approach developed in \cite{ado-24,dm-penev-2017} for weighted total variation norms to a larger class of Kantorovitch semi-distances.

As underlined in~\cite{eberle-majka}, in practical applications it is usually unclear how to choose the semi-distance function $\phi$ to ensure that $\beta_\phi(P)<1$. To answer this question, more or less involved and carefully designed  semi-distance functions combined with appropriate couplings depending on the model at hand have been proposed in the literature, see for instance the refined analysis developed in the series of articles~\cite{nawaf-20,eberle,eberle-majka,hairer-mattingly-scheutzow,huang-21}.  It is   interesting to remark  that the semi-distance function in the strict contraction estimate \eqref{re-upsilon} provided in Theorem \ref{th1-intro} is similar to the one in Theorem 2.6 of the article \cite{nawaf-20} (dedicated to Hamiltonian Monte Carlo) and to the one in Theorem 4.8 of \cite{hairer-mattingly-scheutzow}. On the other hand, the semi-distance  in the  strict contraction estimate \eqref{re-3-cor} given in Theorem \ref{th1-intro} is similar to the one in Theorem 2.3 of \cite{eberle-majka} and in Theorem 2.3 of \cite{huang-21}.

The contraction estimate  \eqref{re-3} ensures the existence of a single invariant probability measure $\mu=\mu P$, while the second part of Theorem \ref{thm:expo} establishes the exponential contraction estimate   $\|\mu_1 P_n-\mu_2 P_n\|_V\leq c \lambda^n\|\mu_1-\mu_2\|_V$. It is well-known that these two properties of $P$ hold  under the drift condition \ref{H1} and when the local contraction condition \ref{H2} is met for the discrete metric. However, as reminded with the present paper, the weighted-norm contraction coefficients technique  developed in~\cite{ado-24,dm-penev-2017} provides a very direct and short proof of these two results. 

Corollary \ref{cor:invariant} shows that   the contraction estimate \eqref{re-upsilon} also ensures the existence of  a unique invariant distribution for $P$ under mild assumptions on the   semi-distance $\kappa$ for which the local contraction condition \ref{H2} holds. In this scenario, the result of Corollary \ref{cor:invariant} is almost identical to part (i) of Theorem 40.4.5 in \cite{douc} (see also Proposition 2.9 in \cite{Quin}), but the operator theoretic approach used in our article provides a very direct and concise proof.

Under assumptions  which are essentially equivalent to assuming  that  \ref{H1} holds and that \ref{H2} is met for some semi-distance $\kappa$,  an exponential convergence result  of the form $D_{\kappa}(\mu_1 P_n, \mu_2 P_n)\leq c \,\lambda^n\big(\mu_1(V)+\mu_2(V)\big)$  is obtained for instance in \cite{Quin} (see in particular Corollary 2.1 and Theorem 2.6) and in Theorem 40.4.5 of the book \cite{douc}.  Theorem \ref{thm:expo}    improves upon this result  by establishing  an  exponential contraction estimate of the form  $D_{\kappa}(\mu_1 P_n, \mu_2 P_n)\leq c\,\lambda^n\|\mu_1-\mu_2\|_V$. 

 A contraction  result for a continuous time Markov semigroup   $(Q_t)_{t\in]0,\infty[}$ is obtained in  \cite{hairer-mattingly-scheutzow},  Theorem 4.8. Assuming the existence of a Lyapunov function $V$, this theorem shows that, for a symmetric and semi-distance $\kappa$ bounded by one, and with   $\iota=1/2$ and $\rho=1$, the inequality $\beta_{\kappa_{\iota, \rho}}(Q_t)<1$  holds for $t$ sufficiently large. However, this result further  assumes that there exists a $t'>0$ such that $\sup_{x\neq y}\ell_{\kappa}(Q_{t'})(x,y)<1$ which, as proved in Lemma \ref{lemma:ced}, is equivalent to assuming that $\beta_{\kappa}(Q_{t'})<1$. In the context of $\RR^m$-valued diffusions, an exponential contraction estimate is obtained in \cite{eberle}, assuming   the existence of a Lyapunov function $V$ satisfying the geometric drift condition (\ref{V-L-ex}) and some additional key growth conditions (see Assumptions 2.3 in~\cite{eberle}). Specifically, Theorem 2.1 in  \cite{eberle} shows that $\beta_{\phi}(Q_t)\leq \lambda^t$ for some constant $\lambda\in]0,1[$, where $\phi(x,y)=\varpi(\|x-y\|)+\varphi_\rho(x,y)$ for some judiciously chosen bounded non-decreasing concave continuous functions $\varpi$ and constant $\rho>0$.  The proof of this result combines diffusion reflection coupling techniques with a general Ito-Tanaka type formula and martingales local times techniques to estimate the time spent by the processes on different level sets. By contrast,   for the same diffusion model as the one considered in \cite{eberle} and without any additional assumptions,   exponential contraction  estimates   can be easily deduced from Theorem \ref{thm:expo}, following the discussion in Section \ref{sub:continuous} (see Section \ref{sub:diffusion} for details). We however stress that these   estimates do  not establish that each Markov operator   $Q_t$ is a   contraction, unlike the result obtained  in  \cite{eberle} (again, see Section \ref{sub:diffusion} for details).

\section{Applications\label{sec:applications}}


In what follows  we let $\Ba_0(S)= \{\Va:1/\Va\in B_{\infty}(S)\}$ be the sub-algebra of (bounded) positive functions $\Theta$, which are locally lower bounded and vanish at infinity  in the sense that, for any $0<\epsilon<\Vert\Theta \Vert$, the set $C_{\epsilon}:=\{\Theta\geq \epsilon\}$ is a non-empty compact set.   We   recall that if a function $W>0$ satisfies
\begin{equation}\label{vanish-0}
P(W)/W\leq \Theta \text{ for some $\Theta\in \Ba_0(S)$} 
\end{equation}
then,  for any $0<\epsilon<\Vert\Theta \Vert\wedge 1$, we have
\begin{align}\label{eq:Lap}
P(W)\leq \epsilon~1_{S-C_{\epsilon}}~W+1_{C_{\epsilon}}~\Theta~W\leq \epsilon~W+c_{\epsilon}\quad\mbox{\rm with}\quad 
c_{\epsilon}:=\Vert \Theta\Vert~\sup_{x\in C_{\epsilon}}W(x).
\end{align}
Consequently, if \eqref{vanish-0} holds for a l.s.c.~function $W$ then \ref{H1} holds with $V=1+W$.  

\subsection{Some illustrative  examples}

\subsubsection{Markov chains on the unit open interval}
 
We let $S=]0,1[$, $\psi_e(x,y)=|x-y|$ be the Euclidean metric, $\iota\in ]0,1[$,
\begin{equation}\label{Ref-upsi}
\psi_\iota(x,y):=\left\vert
\frac{1}{x}-\frac{1}{y}\right\vert^{\iota}+ \left\vert
\frac{1}{1-x}-\frac{1}{1-y}\right\vert^{\iota} 
\end{equation}
and we define the continuous function
\begin{align}\label{eq:W_def}
W_\iota(x)=\frac{1}{x^{\iota}}+\frac{1}{(1-x)^{\iota}}.
\end{align}
 We note that  $(S,\psi_\iota)$ is a complete and separable metric space, while the space $(S,\psi_e)$ is separable but not complete. 

We now let    $\nu(dy)=g(y)dy\in\mathcal{P}(S)$ and $\mu(dy)=h(y)dy\in\mathcal{P}(S)$ be  probability measures with bounded densities, i.e.~such that $\|g\|+\|h\|<\infty$, and we assume first that
\begin{equation}\label{ex-min-x}
\begin{array}{l}
P(x,dy)=x\,\mu(dy)+(1-x) \nu(dy).
\end{array}
\end{equation}
Noting that $W_\iota\in \Ba_{\infty}(S)$ while  $\Vert P(W_\iota)\Vert\leq c:=2(\Vert g\Vert+\Vert h\Vert)/(1-\iota)<\infty$, it follows that \eqref{vanish-0} holds with $\Theta=1/W_\iota$ and thus, by \eqref{eq:Lap}, condition \ref{H1} holds with $V=W_\iota$ (observe that   $W_\iota$ is bounded below away from zero).

We further  assume that the two probability distributions $\nu$ and $\mu$ have disjoint supports. In this situation, for any $x_1, x_2\in S$ we have
$$
\Vert \delta_{x_1} P-\delta_{x_2}P\Vert_{\tiny tv}=|x_1-x_2|~\Vert \mu-\nu\Vert_{\tiny tv}=|x_1-x_2| 
$$
implying that $\beta_{\varphi_0}(P)=1$. However, noting that,
\begin{align*}
W_{\iota}\leq r\implies P(x,dy)\geq (1-\epsilon(r)) \nu(dy),\quad  \epsilon(r):=\sup_{W_{\iota}(x)\leq r}x\in ]0,1[
\end{align*} 
it follows that the local minorization condition \eqref{loc-min} holds for any $r_0>0$ and with $\nu_r=\nu$. Hence, condition \ref{H2} is met with    the discrete metric, that is with $\kappa=\varphi_0$. In this context, by   Theorem \ref{thm:expo}, for any $V$-semi-distance    $\phi$ the Kantorovitch semi-distance $D_\phi(\mu_1 P_n,\mu_2 P_n)$ converges to zero exponentially fast as $n\rightarrow\infty$.  In particular, noting that the complete metric $\psi_S=\psi_\iota$  is a $V$-semi-distance function, Theorem \ref{thm:expo}   gives the following exponential contraction  estimate for the Wasserstein 1-distance:
 \begin{align}\label{eq:Wass_S2}
\WW_{\psi_S}\left(\mu_1P_n,\mu_2P_n\right) \leq c\lambda^n\|\mu_1-\mu_2\|_V,\quad\forall n\geq 0,\quad\forall\mu_1,\mu_2\in\mathcal{P}_V(S).
\end{align}

Following  an example proposed in \cite{df-99},  Section 2.1, we now assume that
$$
P(x,dy):=
\frac{1}{2x}~1_{]0,x]}(y)\,dy+\frac{1}{2(1-x)}~1_{]x,1[}(y)\,dy 
$$
so that the Markov transition $P$ has an unbounded density. In this situation, we impose that $\iota\in]0,1/2[$ and   show  in Section \ref{RFI-arcsin-proof} that  there exist some parameters $ \epsilon\in [0,1[$ and $c\geq 0$ such that
\begin{equation}\label{RFI-arcsin}
P(W_\iota)(x)\leq \epsilon\,W_\iota(x)+c 
\end{equation}
and thus such that condition \ref{H1} holds with $V=W_\iota
$. Since   $V\in \Ba_{\infty}(S)$ while for all $x\in S$ we have $P(x,dy)\geq \frac{1}{2}~1_{]0,1[}(y)~dy$, the local minorization condition
 \eqref{loc-min} is satisfied on the compact sub-level sets of $V$ and  thus \ref{H2} is met  with the $V$-semi-distance $\kappa=\varphi_0$.  Therefore, noting  that the complete metric  $\psi_S=\psi_{\iota}$ is a $V$-semi-distance, it follows that the exponential contraction estimate \eqref{eq:Wass_S2} for the Wassertein 1-distance holds.

\subsubsection{A Markov chain on the half-line}

Let $S=]0,\infty[$ and  $P(x,dy)$ be the  Markov transition on the  half-line $S=]0,\infty[$ given  by
$$
P(x,dy):=\frac{\delta}{2}~
\frac{1}{x}~1_{]0,x]}(y)~dy+\frac{\delta}{2}~g(y-x)~dy+(1-\delta)\nu_{\gamma}(dy).
$$
In the above display,  $g$ and $\nu_{\gamma}$ stand  for the Gaussian density and the Weibull distribution associated with a shape parameter $\gamma>0$ defined by
$$
g(z)=1_{]0,\infty[}(z)~\sqrt{\frac{2}{\pi}}~e^{-z^2/2}\quad \mbox{\rm and}\quad
\nu_{\gamma}(dz):=1_{]0,\infty[}(z)~(1+\gamma)~z^{\gamma}~e^{-z^{1+\gamma}}~dz.
$$
We remark that, as in the previous example, the   density of $P$ is unbounded and the space  $S$ equipped with the Euclidean metric $\psi_e(x,y)=|x-y|$ is separable but not complete. It can however be shown that $(S,\psi_S)$ is a complete and separable metric space, where the    metric $\psi_S$ is defined, for some $\iota\in ]0,1[$, by
\begin{align}\label{Ref-upsi2}
\psi_S(x,y):=\left\vert
\frac{1}{x}-\frac{1}{y}\right\vert^{\iota}+ \left\vert
x-y\right\vert.
\end{align}
We now let $W(x)=x^{-\iota}+x$ with $\iota$ as in \eqref{Ref-upsi2} and remark that $W\in \Ba_{\infty}(S)$. In Section \ref{RFI-arcsin-proof} we show that  there exist some parameters $\epsilon\in ]0,1[$ and $c\geq 0$ such that
\begin{equation}\label{RFI-0-2}
P(W)\leq \epsilon\, W+c 
\end{equation}
and therefore condition \ref{H1} holds with $V=W$ (observe that $W$ is bounded below away from zero).  In addition, since $V\in \Ba_{\infty}(S)$ while for all $x\in S$ we have $P(x,dy)\geq (1-\delta)\nu_{\gamma}(dy)$, the local minorization condition
 \eqref{loc-min} is satisfied on the compact sub-level sets of $V$ and  thus \ref{H2} is met with the $V$-semi-distance  function $\kappa=\varphi_0$. Therefore, noting that the complete metric $\psi_S$ defined in \eqref{Ref-upsi2} is $V$-semi-distance, it  follows from Theorem \ref{thm:expo} that the exponential contraction estimate \eqref{eq:Wass_S2} for the Wassertein 1-distance    holds.

\subsection{Markov chains on domains with a boundary}\label{bd-sec} 

In this subsection we consider  a Markov transition $P(x,dy)=p(x,y)dy$ with a uniformly bounded, strictly positive and continuous density $p(x,y)$
on a   open and connected set  $S\subset \RR^m$ having a non-empty boundary $\partial S$. We    define the continuous function  
$$
d_m(x,\partial S):=\inf{\left\{\Vert x-y\Vert~:~y\in \partial S\right\}}
$$
 and, for some $ \iota\in]0,1[$, we equip $S$ with the metric
\begin{equation}\label{Ref-upsi-bord}
\psi_S(x,y):=\left\vert
\frac{1}{d_m(x,\partial S)}-\frac{1}{d_m(y,\partial S)}\right\vert^{\iota}+\|x-y\|^\iota.
\end{equation}
It can be shown that $(S,\psi_S)$ is a complete and separable metric space. \subsubsection{Bounded domains with a locally Lipschitz boundary\label{sub:BB}}

 Assume that $S$ is bounded and has a locally Lipschitz boundary $\partial S$, and  let $W(x)=d_m(x,\partial S)^{-\iota}+\|x\|^\iota$. In this scenario, we clearly have $W\in \Ba_{\infty}(S)$ and,  since the function $W$ is
 integrable over the set $S$ (c.f.~Lemma 5.2 in~\cite{ado-24}), under the above assumptions on $P$ we have $\|P(W)\|<\infty$. Consequently, \eqref{vanish-0} is met with $\Theta=1/W$ and it follows from \eqref{eq:Lap} that \ref{H1} holds with $V=1+W$. On the other hand,  since $P$ is assumed to have a strictly positive and continuous density,  condition \ref{H2} holds with $\kappa=\varphi_0$ by Lemma \ref{lemma:minor}. By Theorem \ref{thm:expo}, it follows that  for any $V$-semi-distance $\phi$ the Kantorovitch semi-distance $D_\phi(\mu_1 P_n,\mu_2 P_n)$ converges to zero exponentially fast as $n\rightarrow\infty$. In particular, one can   check that    the metric $\psi_S$ defined in \eqref{Ref-upsi-bord} is   the $V$-semi-distance function, and thus Theorem \ref{thm:expo} gives the following exponential contraction  estimate for the Wasserstein 1-distance:
 \begin{align}\label{eq:Wass_S}
\WW_{\psi_S}\left(\mu_1P_n,\mu_2P_n\right) \leq c \lambda^n\|\mu_1-\mu_2\|_V,\quad\forall n\geq 0,\quad\forall\mu_1,\mu_2\in\mathcal{P}_V(S).
\end{align}

\subsubsection{Domains with a locally Lipschitz bounded boundary}
Assume that $S$ is not necessarily bounded but has a locally Lipschitz bounded boundary $\partial S$. Assume also that there exists a function $W_1\in \Ba_{\infty}(\RR^m)$  such that $\sup_{x\in S} \|x\|^\iota/W_1(x)<\infty$ and such that $P(W_1)\leq \epsilon_1 W_1+c_1$  for some parameters $\epsilon_1\in [0,1[$ and $c_1> 0$. In this scenario, condition \ref{H1} holds with $V=1+W_1$ but, for this definition of $V$, the metric $\psi_S$ defined in \eqref{Ref-upsi-bord} may not be a $V$-semi-distance. To solve this problem we let $W_2(x)=d_m(x,\partial S)^{-\iota}$ and $c_2=\Vert P(W_2)\Vert \leq 1+\Vert P(W_2 1_{W_2\geq 1})\Vert$.  Using the fact that $\{x\in S:W_2(x)\geq 1\}=\{x\in S :d_m(x,\partial S)\leq 1\}$ with $\partial S$ a bounded set, and recalling that the density $p(x,y)$ is assumed to be uniformly bounded, by using a similar argument as in the proof of Lemma 5.2 in~\cite{ado-24} we can check that $\Vert P(W_2 1_{W_2\geq 1})\Vert<\infty$. This implies that  $c_2<\infty$ and thus, since 
 $$
  \frac{P(W_1+W_2)}{W_1+W_2}\leq \frac{ \epsilon_1 W_1+(c_1+c_2)}{W_1+W_2}\leq \epsilon_1+\frac{(c_1+c_2)}{W_1+W_2},
 $$
 it follows  that \ref{H1} also holds with $V=1+W_1+W_2 \in\Ba_\infty(S)$.  For this definition of $V$ the metric $\psi_S$  defined in \eqref{Ref-upsi-bord} is a $V$-semi-distance  and, as in Section  \ref{sub:BB}, we can apply Lemma \ref{lemma:minor} to conclude that \ref{H2} holds with $\kappa=\varphi_0$. Therefore,  by Theorem \ref{thm:expo},  the exponential Wasserstein contraction estimate \eqref{eq:Wass_S} holds.

\subsection{Block-Gibbs samplers\label{sub:GIbbs}}

\subsubsection{Conjugate transformations}
We consider a locally compact Polish space $S$  endowed with some locally bounded positive measure  $\nu$, and we let $(\nu_g,\nu_h)$ be Boltzmann-Gibbs probability distributions on $S$
defined by 
\begin{equation}\label{ref-intro-UV}
\nu_g(dx):=\exp{(-g(x))}~\nu(dx)
\quad \mbox{\rm and}\quad
\nu_h(dy):=\exp{(-h(y))}~\nu(dy) 
\end{equation}
for some normalized potential functions $g:x\in S\mapsto g(x)\in \RR$ and 
$h:y\in S\mapsto h(y)\in \RR$. In addition, we let $M(y,dx)$ be a Markov transition   of the form
\begin{equation}\label{ref-intro-Kcchi}
M(y,dx)=\exp{(-m(y,x))}~\nu(dx)
\end{equation}
 for some normalized  potential transition $m:(y,x)\in S^2\mapsto m(y,x)\in \RR$.
 
In this context, we have the duality formula
$$
\pi_{h}(d(x,y)):=\nu_h(dy)~M(y,dx)=(\nu_h M)(dx)~M^{\star}_{\nu_h}(x,dy)
$$
where the conjugate (a.k.a.~adjoint or backward) transition $M^{\star}_{\nu_h}$ is given by the Markov transition 
\begin{equation}\label{def-K-gibbs}
\displaystyle M^{\star}_{\nu_h}(x,dy)=K(x,dy):=
\frac{d\nu}{d\nu_h M}(x)~\nu_h(dy)~\exp{(-m(y,x))}\Longrightarrow \nu_h MK=\nu_h.
\end{equation}
To go one step further in our discussion we let  $\pi_g$  be the probability measure on $S^2$ defined by
$$
\pi_g(d(x,y)):=\nu_g(dx)~K(x,dy)=
(\nu_g K)(dy)~K^{\star}_{\nu_g}(y,dx)
$$
where the conjugate transition $ K^{\star}_{\nu_g}$ is given by the Markov transition
\begin{equation}\label{def-L-gibbs}
 K^{\star}_{\nu_g}(y,dx)=L(y,dx):=\frac{1}{M\left(\frac{d\nu_g}{d\nu_hM}\right)(y)}~M(y,dx)~\frac{d\nu_g}{d\nu_hK}(x)\Longrightarrow
 \nu_g KL=\nu_g.
\end{equation}

We stress that large class of forward-backward-type samplers,  including proximal samplers~\cite{cai,chewi-chen,chewi-phd,jiaming,jiaojiao,lee,mou}, Glauber dynamics~\cite{zchen,glauber}, Sinkhorn Gibbs-loop processes~\cite{adpm-25,adpm-24,dm-25} and two block-Gibbs samplers~\cite{damlen1999gibbs,gelfand,gelfand-2,vono2019split}
with a prescribed target measure,  have a two stages 
transition $P=KL$ of the product form of a Markov transition $K$ and its adjoint $L$; see also ~\cite{anari,ychen,geman,guan,rendell2020global,vono2020asymptotically} and references therein. 

The stability analysis of these Markov chains are traditionally based on spectral gaps  or relative entropies and log-Sobolev inequalities~\cite{bakry,achen,dplm-03,diaconis,ding,jerrum}, see also the recent article~\cite{goyal} and references therein.
The stability of Gibbs samplers in particular classes of statistical conjugate models has also been studied using Lyapunov and Wasserstein approaches, respectively in the articles ~\cite{chan,hobert,rao,roman} and~\cite{hobert-15}. As underlined in~\cite{chan} the design of appropriate Lyapunov functions depends on some understanding of the dynamics of the Gibbs sampler which is often implicitly defined. Next section provides a simple method for designing Lyapunov functions for transitions $(K,L)$ of the form \eqref{def-K-gibbs} and \eqref{def-L-gibbs}.

\subsubsection{Lyapunov functions}

Let us assume that the functions $(g,h,m)$ are locally bounded and bounded below by real numbers
$(g^-,h^-,m^-)$
and the that two functions  $(g,h)$ have compact sub-level sets.

Consider the functions
\begin{equation}\label{UV-delta}
g_{\delta}:=\delta g-m_h\quad\mbox{\rm and}\quad
h_{\delta}:=\delta h-m_g
\end{equation}
indexed by $\delta\in ]0,1[$ with the integrated functions $(m_g,m_h)$ defined by
\begin{equation}\label{int-costs}
m_h(x):=\int\nu_h(dz)~ m(z,x)\quad\text{and}\quad
m_g(y):=\int\nu_g(dz)~ m(y,z).
\end{equation}
Note that
$
m_h\wedge m_g\geq m^-
$.
We further assume there is some $\delta\in ]0,1/2[$ such that the functions
$
(g_{\delta},h_{\delta})
$ are locally bounded with compact sub-level sets  and such that
\begin{equation}\label{huv-prim}
\nu_{(1-2\delta)g}\left(
1\right)\vee \nu_{(1-\delta)h}\left(
1\right)<\infty.
\end{equation}
Note that $
(g_{\delta},h_{\delta})
$ are bounded below by real numbers $
(g^-_{\delta},h^-_{\delta})
$. 
Various  examples satisfying the above regularity conditions are discussed in~\cite{adpm-25}.  
\begin{examp}
Assume that $(g,h)$ and $(m_{g},m_h)$ are continuous functions  on $S=\RR^d$.
In addition, assume that there exist  some parameters $a_1,b_1\in\RR$,  $a_2,b_2> 0$ and $q\geq p>0$ such that
$$
m(x,y)\leq a_1+a_2~(\Vert x\Vert^{p}+\Vert y\Vert^{p})\quad \mbox{and}\quad
  g(x)\wedge h(x)\geq b_1+b_2~\Vert x\Vert^{q}.
  $$
  In this case \eqref{huv-prim} holds and $
(g_{\delta},h_{\delta})
$ are locally bounded with compact sub-level sets
 for any $\delta\in ]0,1/2[$  as soon as $q>p$, and for some $\delta\in ]0,1/2[$ as soon as $b_2/2>a_2$ and $q=p$.
  \end{examp}

\begin{lem}\label{lem-lyap-gibbs}
The Markov transitions $(K,L)$ defined in \eqref{def-K-gibbs} and \eqref{def-L-gibbs} 
satisfy 
\begin{eqnarray}
\exp{(-\delta g)}~
K\left(\exp{(\delta h)}\right)&\leq& c_{\delta,h}
~\exp{\left(-g_{\delta}\right)}\label{h2g}\\
\exp{(-\delta h)}~
L\left(\exp{(\delta g)}\right)&\leq &c_{\delta,g}~\exp{\left(- h_{\delta}\right)}
 \label{g2h}
\end{eqnarray}
with the parameters
$$
c_{\delta,h}:=\exp{(-m^-)}~\nu_{(1-\delta)h}(1)\quad \mbox{and}\quad
 c_{\delta,g}:=\nu_{(1-2\delta)g}(1)~\exp{(-g_{\delta}^--\nu_g(g))}.
 $$
\end{lem}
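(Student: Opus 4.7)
The proof of Lemma \ref{lem-lyap-gibbs} is a direct calculation based on the explicit formulas \eqref{def-K-gibbs} and \eqref{def-L-gibbs} for $K$ and $L$, combined with two elementary ingredients used repeatedly: the uniform lower bound $m \geq m^-$, and Jensen's inequality applied to the convex function $u \mapsto e^{-u}$ against the probability measures $\nu_h$ and $\nu_g$.

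For \eqref{h2g}, I would set $r(x) := \int e^{-m(z,x)} \nu_h(dz)$, so that $r$ is the Radon--Nikodym density of $\nu_h M$ with respect to $\nu$ and hence $\frac{d\nu}{d\nu_h M}(x) = 1/r(x)$. Expanding the definition \eqref{def-K-gibbs} then gives
$$K(e^{\delta h})(x) = \frac{1}{r(x)} \int e^{-(1-\delta)h(y) - m(y,x)} \nu(dy).$$
I would upper bound the numerator by $e^{-m^-} \nu_{(1-\delta)h}(1)$ using $-m(y,x)\leq -m^-$ together with the integrability condition \eqref{huv-prim}, and lower bound the denominator by $e^{-m_h(x)}$ by applying Jensen's inequality to $e^{-m(\cdot,x)}$ against the probability measure $\nu_h$. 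Taking the ratio and multiplying both sides by $e^{-\delta g(x)}$ yields \eqref{h2g}, since $-\delta g + m_h = -g_\delta$ by the definition \eqref{UV-delta}.

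For \eqref{g2h}, a similar expansion of \eqref{def-L-gibbs}, using that $\frac{d\nu_g}{d\nu_h M}(x) = e^{-g(x)}/r(x)$ plays here the role that $d\nu/d\nu_h M$ played before, leads to
$$L(e^{\delta g})(y) = \frac{1}{s(y)} \int \frac{e^{-(1-\delta)g(x) - m(y,x)}}{r(x)}\, \nu(dx),$$
where $s(y)$ is the appropriate normalization. The numerator is handled by reusing the bound $1/r(x) \leq e^{m_h(x)}$ obtained from Jensen, then invoking the algebraic identity $m_h - (1-\delta)g = -(1-2\delta)g - g_\delta$ (which uses \eqref{UV-delta}), then the pointwise bounds $g_\delta \geq g_\delta^-$ and $-m(y,x) \leq -m^-$, and finally recognizing the remaining integral as $\nu_{(1-2\delta)g}(1)$, finite by \eqref{huv-prim}. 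The denominator $s(y)$ is bounded from below by a further application of Jensen's inequality against the probability measure $\nu_g$, producing the desired $e^{-m_g(y)}$ factor together with a multiplicative constant that is absorbed into $c_{\delta,g}$. Multiplying by $e^{-\delta h(y)}$ and using $-\delta h + m_g = -h_\delta$ completes the proof.

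The main obstacle is the lower bound on $s(y)$ in the proof of \eqref{g2h}: because $s(y)$ couples the factor $1/r(x)$ (which carries the $\nu_h$ structure) with the factor $e^{-m(y,x)}$ (which carries the $y$ dependence), the Jensen step against $\nu_g$ must be arranged so that the $y$ dependence cleanly factorizes as $e^{-m_g(y)}$ while the $x$ integration produces a constant matching $c_{\delta,g}$. Once this Jensen application is in place, the rest of the argument reduces to bookkeeping with the decomposition \eqref{UV-delta} and the integrability condition \eqref{huv-prim}.
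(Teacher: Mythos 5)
Your proposal is correct and follows essentially the same route as the paper's proof: Jensen's inequality against $\nu_h$ to get $d\nu_h M/d\nu \geq e^{-m_h}$ for \eqref{h2g}, and Jensen against $\nu_g$ (dropping the nonnegative relative-entropy term) to lower bound the normalization $M\bigl(\tfrac{d\nu_g}{d\nu_h M}\bigr)(y)\geq e^{\nu_g(g)-m_g(y)}$ for \eqref{g2h}, combined with the pointwise bounds $m\geq m^-$, $g_\delta\geq g_\delta^-$ and the algebraic identities from \eqref{UV-delta}. Your retention of the $e^{-m^-}$ factor in the second bound is in fact slightly more careful than the paper, whose stated constant $c_{\delta,g}$ omits it (harmless when $m^-\leq 0$ is not assumed only if one absorbs it as you do).
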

\begin{proof}
Using Jensen's inequality we check that
$$
\frac{d\nu_h M}{d\nu}(x)=\int\nu_h(dz) \exp{(-m(z,x))}\geq \exp{\left(-m_h(x)\right)}
$$
 yielding  the estimate
\begin{align}\label{eq:LL}
\exp{(-\delta g(x))}~
K(x,dy)~\exp{(\delta h(y))}\leq c_0
~\exp{\left(-g_{\delta}(x)\right)}~\nu_{(1-\delta)h}(dy)
\end{align}
with $c_0:=\exp{(-m^-)}$. We  end the proof of (\ref{h2g}) integrating both sides of \eqref{eq:LL}.

To check (\ref{g2h}), note that
$$
M\left(\frac{d\nu_g}{d\nu_hM}\right)(y)=
\int~\nu_g(dx)~\exp{\left(g(x)-m(y,x)+\log{\frac{d\nu_g}{d\nu_hM}(x)}\right)}.
$$
Then, using Jensen's inequality, we check that
$$
M\left(\frac{d\nu_g}{d\nu_hM}\right)(y)\geq 
\exp{\left(\nu_g(g)-m_g(y)+\nu_g\left(\log{\frac{d\nu_g}{d\nu_hM}}\right)\right)}\geq \exp{\left(\nu_g(g)-m_g(y)\right)}.
$$
This yields the estimate
$$
L(y,dx)\leq \exp{\left(-\nu_g(g)+m_g(y)\right)}~e^{-m(y,x)}~\nu_g(dx)~\frac{d\nu}{d\nu_hM}(x)
$$
from which we check that
\begin{align}\label{eq:LLL}
e^{-\delta h(y)}
L(y,dx)e^{\delta g(x)}\leq c_1~\exp{\left(- h_{\delta}(y)\right)}~\nu_{(1-2\delta)g}(dx)
\end{align}
with $c_1:=\exp{(-g_{\delta}^--\nu_g(g))}$. We
 end the proof of (\ref{g2h}) integrating both sides of \eqref{eq:LLL}.
\end{proof}

Lemma~\ref{lem-lyap-gibbs} ensures that 
transitions $(K,L)$ defined in \eqref{def-K-gibbs} and \eqref{def-L-gibbs} satisfy the drift conditions in \eqref{Lyap-KL} with $(W,V)=(\exp{(\delta h)},\exp{(\delta g)})$. To check this claim, given some $\epsilon\in ]0,1[$, consider some $r>0$ such that
$$
(c_{\delta,h}\vee  c_{\delta,g})
\exp{\left(-r\right)}\leq \epsilon.
$$
Then, by (\ref{h2g}) and  (\ref{g2h}), the drift conditions in \eqref{Lyap-KL} are satisfied with $c=c_{1,\delta}(r)\vee c_{2,\delta}(r) $ and the parameters
$$
c_{1,\delta}(r):=c_{\delta,h}~
\exp{\left(\sup_{g_{\delta}(x)\leq r}|\delta g(x)-g_{\delta}(x)|\right)}~\mbox{\rm and}~
c_{2,\delta}(r):=c_{\delta,g}~
\exp{\left(\sup_{h_{\delta}(x)\leq r}|\delta h(x)-h_{\delta}(x)|\right)}.
$$
\subsubsection{Local minorizations}

For any $r>1$, consider the level sets
$$
C_{V}(r):=\{V\leq r\}\quad \mbox{\rm and}\quad
C_{W}(r):=\{W\leq r\}.
$$

Since $S$ can be exhausted respectively by the compact sub-levels sets
$C_{V}(r)$ and $C_{W}(r)$, there exists some $r_0>1$ such that $\nu(C_{V}(r))\wedge \nu(C_W(r))>0$  for $r=r_0$ and thus for any $r\geq r_0$. This ensures that there exists some $r_0>0$ such that   
$
\nu_h(C_W(r))\wedge \nu_g(C_V(r))>0
$ for any $r\geq r_0$, and we set
$$
\nu_{g,r}(dx):=\frac{\nu_g(dx)~1_{C_V(r)}(x)}{\nu_g(C_V(r))}\quad \mbox{\rm and}\quad
\nu_{h,r}(dy):=\frac{\nu_h(dy)~1_{C_W(r)}(y)}{\nu_h(C_W(r))},\quad\forall r\geq r_0.
$$

\begin{lem}
For any $r\geq r_0$ there exists some $\alpha(r)\in ]0,1[$ such that
for any $x\in C_{V}(r)$ and $y\in C_{W}(r)$ we have
\begin{equation}\label{min-loc-KL}
K(x,dy)~\geq \alpha(r)~\nu_{h,r}(dy)\quad \mbox{and}\quad
L(y,dx)\geq~\alpha(r)~\nu_{g,r}(dx).
\end{equation}
\end{lem}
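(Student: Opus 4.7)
The strategy is to bound from below the densities of $K(x,\cdot)$ with respect to $\nu_h$ and of $L(y,\cdot)$ with respect to $\nu_g$, uniformly on the compact rectangles $C_V(r)\times C_W(r)$ and $C_W(r)\times C_V(r)$. Since $(g,h)$ have compact sub-level sets and $V=e^{\delta g}$, $W=e^{\delta h}$, the sets $C_V(r)$ and $C_W(r)$ are compact; the local boundedness of $m$ then yields $M_r:=\sup\{m(y,x):(x,y)\in C_V(r)\times C_W(r)\}<\infty$. Moreover, because $\nu_h$ is a probability measure and $m\geq m^-$, one has the uniform upper bound
$$\frac{d\nu_h M}{d\nu}(x)=\int e^{-m(z,x)}\,\nu_h(dz)\leq e^{-m^-}.$$

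For $K$, these two estimates combined with the definition \eqref{def-K-gibbs} give, for every $x\in C_V(r)$, the pointwise density estimate $\frac{d\nu}{d\nu_h M}(x)\,e^{-m(y,x)}\geq e^{m^--M_r}$ on $y\in C_W(r)$, whence
$$K(x,dy)\geq e^{m^--M_r}\,1_{C_W(r)}(y)\,\nu_h(dy)=e^{m^--M_r}\,\nu_h(C_W(r))\,\nu_{h,r}(dy),$$
which is the desired minorization with $\alpha_K(r):=e^{m^--M_r}\nu_h(C_W(r))\in ]0,1[$.

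For $L$, rewriting \eqref{def-L-gibbs} as $L(y,dx)=N(y)^{-1}e^{-m(y,x)}\big[\tfrac{d\nu_h M}{d\nu}(x)\big]^{-1}\nu_g(dx)$ with $N(y):=M(d\nu_g/d\nu_h M)(y)$, the factors $e^{-m(y,x)}$ and $[d\nu_h M/d\nu(x)]^{-1}$ are bounded below by $e^{-M_r}$ and $e^{m^-}$ on $C_W(r)\times C_V(r)$ exactly as above; the delicate step is a uniform upper bound on $N(y)$. Applying Jensen's inequality to $u\mapsto 1/u$ against the probability measure $\nu_h$ gives $\frac{d\nu_h M}{d\nu}(x)\geq e^{-m_h(x)}$, hence $\frac{d\nu_g}{d\nu_h M}(x)\leq e^{m_h(x)-g(x)}$. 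Using the identity $m_h-g=-g_\delta-(1-\delta)g$ from \eqref{UV-delta}, and the successive bounds $g_\delta\geq g_\delta^-$, $e^{-\delta g}\leq e^{-\delta g^-}$ and $m\geq m^-$, one obtains
$$N(y)\leq e^{-g_\delta^-}\int e^{-(1-\delta)g(x)}M(y,dx)\leq e^{-g_\delta^--m^--\delta g^-}\,\nu_{(1-2\delta)g}(1)=:\bar N<\infty,$$
where the finiteness of the right-hand side is precisely the tail hypothesis \eqref{huv-prim}. Combining these three estimates yields $L(y,dx)\geq \bar N^{-1}e^{m^--M_r}\nu_g(C_V(r))\,\nu_{g,r}(dx)$ on $C_W(r)\times C_V(r)$, and the lemma follows with $\alpha(r):=\alpha_K(r)\wedge \bar N^{-1}e^{m^--M_r}\nu_g(C_V(r))$ (reduced below $1$ if necessary).

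The main obstacle is the uniform control of $N(y)$, since the Radon--Nikodym density $d\nu_g/d\nu_h M=e^{-g}/(d\nu_h M/d\nu)$ is a priori unbounded; the Jensen step, the potential decomposition $m_h-g=-g_\delta-(1-\delta)g$ and the integrability assumption \eqref{huv-prim} are all needed simultaneously. Once $N(y)$ is controlled, the rest is a compactness argument in the spirit of Lemma~\ref{lemma:minor}.
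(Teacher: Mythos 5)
Your proof is correct and follows essentially the same route as the paper's: a pointwise lower bound on the density of $K(x,\cdot)$ and of $M(y,dx)\,\tfrac{d\nu_g}{d\nu_h M}(x)$ via the uniform bound $\tfrac{d\nu_h M}{d\nu}\leq e^{-m^-}$ and the supremum of $m$ on the compact rectangle $C_V(r)\times C_W(r)$, together with a Jensen-based uniform upper bound on the normalizer $M(\tfrac{d\nu_g}{d\nu_h M})(y)$ using the decomposition $m_h-g=-g_\delta-(1-\delta)g$ and \eqref{huv-prim}. The only (immaterial) difference is that your bound on the normalizer lands on $\nu_{(1-2\delta)g}(1)$ after an extra $e^{-\delta g^-}$ factor, whereas the paper stops at $\nu_{(1-\delta)g}(1)$.
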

\begin{proof}
Observe that, for any $x\in C_V(r)$, we have
$$
\begin{array}{l}
\displaystyle ~K(x,dy)\geq \exp{(m^-)}~
\nu_h(dy)~\exp{(-m(y,x))}~1_{C_W(r)}(y)\geq 
 \exp{(a(r))}~\nu_h(dy)~1_{C_W(r)}(y)
\end{array}$$
with the parameter
$$
a(r):=m^--\sup_{(x,y)\in (C_{V}(r)\times C_{W}(r))}m(y,x),
$$
which   implies that  
$$
K(x,dy)~\geq \alpha_h(r)~\nu_{h,r}(dy)
\quad \mbox{\rm with}\quad
\alpha_h(r):= \exp{(a(r))}~\nu_h(C_W(r)),\quad\forall  x\in C_V(r).
$$
In the same vein, we have
$$
M(y,dx)~\frac{d\nu_g}{d\nu_hM}(x)=\frac{\nu_g(dx)~e^{-m(y,x)}}{\int \nu_h(dy)~e^{-m(y,x)}}\geq e^{m^-}~\nu_g(dx)~e^{-m(y,x)}
$$
and thus, by Jensen's inequality,
$$
\begin{array}{rcl}
\displaystyle
~M(y,dx)~\frac{d\nu_g}{d\nu_hM}(x)&\leq& e^{m^-}~\nu_g(dx)~ e^{m_h(x)} \\
&=&e^{m^-}~\nu_{(1-\delta)g}(dx)~ e^{-g_{\delta}(x)}\leq 
\exp{(m^--g_{\delta}^-)}~\nu_{(1-\delta)g}(dx).
\end{array}$$
For any $y\in C_W(r)$, this implies that
$$
\begin{array}{l}
\displaystyle 
L(y,dx)\geq~\alpha_g(r)~\nu_{g,r}(dx)
\end{array}$$
with the parameters
$$
\alpha_g(r):= \nu_g(C_V(r))~\frac{\exp{(b(r))}}{\nu_{(1-\delta)g}(1)}
\quad \mbox{\rm and}\quad
b(r):=g^-_{\delta}-\sup_{(x,y)\in (C_{V}(r)\times C_{W}(r))}m(y,x).
$$
We conclude that the two inequalities in \eqref{min-loc-KL} are satisfied with $\alpha(r):=\alpha_g(r)\wedge \alpha_h(r)$. This ends the proof of the lemma.
\end{proof}

The local minorization estimates \eqref{min-loc-KL}  ensures that
for any $r\geq r_0$, $(x_1,x_2)\in C_{V}(r)^2$ and $(y_1,y_2)\in  C_{W}(r)^2$
we have
$$
\Vert \delta_{x_1} K-\delta_{x_2} K\Vert_{\tiny tv}\vee \Vert \delta_{y_1} L-\delta_{y_2} L\Vert_{\tiny tv}\leq 1-\alpha(r)
$$
from which we conclude that condition \eqref{loc-contracKL} is met.
Recalling that \eqref{Lyap-KL} is also met, it follows that
the contraction estimate
\eqref{pre-gibbs} is  holds with $P=KL$, where  for any $\rho>0$ the semi-distance $\varphi_{\rho}$  is defined in  (\ref{vero}) and (\ref{def-phi-rho}).

\subsection{Iterated random functions}

In this subsection we let $\Za$ be a measurable space equipped with some probability distribution $\mu$, $(S,\Vert\cdot\Vert)$ be a normed vector space, and we assume that 
\begin{equation}\label{FZ-def}
P(x,dy)=\PP\left(F_Z(x)\in dy\right),\quad Z\sim \mu
\end{equation}
for some function
$F:(x,z)\in (S\times\Za)\mapsto F_z(x)\in S$. In this scenario the Markov semigroup $(P_n)_{n\geq 0}$ takes the form
$$
P_n(x,dy)=\PP\left(X_n(x)\in dy\right)\quad\mbox{\rm with}\quad
X_n(x)=F_{Z_n}(X_{n-1}(x))=\left(F_{Z_n}\circ\ldots\circ F_{Z_1}\right)(x)
$$
where the $Z_n$'s  are independent copies of $Z$.  Markov chains are often  constructed by iterating random functions, and in this framework it is known that the semigroup is stable as soon as
the random functions exhibit a certain average contraction, 
 see for instance~\cite{df-99,meyn-tweedie,steinsaltz}.

\subsubsection{Some general local contraction results\label{sub:locRF}}

Assume that the function  $x\mapsto F_z(x)$ is smooth for any $z\in\Za$ and denote by $\nabla_1F_z(x)$
its gradient, and assume that  the function $\Vert x-y\Vert^p$ is a $V$-semi-distance for some $p>0$. In this context conditions  \ref{H2} is met with the semi-distance  
$\kappa(x,y):=\Vert x-y\Vert^p$   as soon as,  for all $r$ large enough, there exists some $\alpha_p(r)\in ]0,1[$ such that
\begin{equation}\label{wcond-p2}
\varpi_V (x,y)\leq r\Longrightarrow \EE\left(\Vert F_Z(x)-F_Z(y)\Vert^p\right)\leq (1-\alpha_p(r))  \Vert x-y\Vert^p.
\end{equation}
Notice that \eqref{wcond-p2} holds with $p=1$ if
$\sup_{ x\in \mbox{\rm co}\left(\{V\leq r\}\right)}\EE\left(\Vert \nabla_1F_Z(x)\Vert\right)<1$, where $\mbox{\rm co}\left(\{V\leq r\}\right)$ stands for the convex hull of  the compact sub-level sets $\{V\leq r\}$, assuming that $V\in  \Ba_{\infty}(S)$. It is also worth noting that if $F_z=F+z$ for some function $F:x\in S\mapsto F(x)\in S$ then condition \eqref{wcond-p2} depends only on $F$. In this case,  \ref{H2} is met with $\kappa(x,y):=\Vert x-y\Vert^p$ for any distribution $\mu$ of $Z$.

Alternatively, if $V\in  \Ba_{\infty}(S)$ and $P(x,dy)=p(x,y)\,\nu(dy)$ for some continuous and strictly  positive transition density $p(x,y)$  then, by Lemma \ref{lemma:minor}, \ref{H2} holds for the semi-distance $\kappa=\varphi_0$. 

\subsubsection{A general approach to construct Lyapunov functions}\label{lyap-ref-if-intro}

We now assume that $F_Z =F +G_{Z}$ for some functions $F:x\in S\mapsto F(x)\in S$ and
$G:(x,z)\in (S\times\Za)\mapsto G_z(x)\in S$. In addition, we assume that there exist  an $x_0\in S$ as well as some parameters $ 0< \lambda(x_0)<1$ and
$c(x_0)\geq 0$ such that  
\begin{equation}\label{hypFx}
\Vert F(x)-F(x_0)\Vert\leq \lambda(x_0)\,
\Vert x-x_0\Vert+c(x_0),\quad\forall x\in S.
\end{equation}
To make the connection with the geometric drift condition imposed in \ref{H1},
 condition  \eqref{hypFx}  ensures that the norm is a Lyapunov function of the deterministic part of the transition.
 
Assume first that there exists a $\delta>0$ such that $\sup_{x\in S}
\EE\big(e^{\delta \Vert G_Z(x)\Vert}\big)     <\infty$. Then, noting that
$\Vert F(x)\Vert\leq \lambda_0~
\Vert x\Vert+c_{\lambda}$ with
$\lambda_0:=\lambda(x_0)$ and   $
c_{\lambda}:=c(x_0)+\Vert F(x_0)\Vert+
\lambda(x_0)\Vert x_0\Vert$,
we have
$$
\EE\left(e^{\delta \Vert F(x)+G_Z(x)\Vert}\right)\leq 
a_1~
e^{\delta  \lambda_0
\Vert x\Vert}\quad \mbox{\rm with}\quad a_1:=e^{\delta  c_{\lambda}}~\sup_{x\in S}
\EE\left(e^{\delta \Vert G_Z(x)\Vert}\right).
$$
We conclude that $P(W)/W\leq \Theta\in \Ba_0(S)$  with the functions
$$
W(x):=\exp{(\delta\Vert x\Vert)}
\quad \mbox{\rm and}\quad
 \Theta(x):= a_1~\exp{\left(-\delta (1-\lambda_0)~
\Vert x\Vert\right)}
$$
and thus,  by \eqref{eq:Lap}, condition \ref{H1} holds with $V=W\in \Ba_{\infty}(S)$.

Assume now that  $\sup_{x\in S}\EE\left(\Vert G_Z(x)\Vert^p\right)<\infty$ for some $p\geq 1$. In this scenario, letting $c_{\lambda,p} =c_{\lambda}+\sup_{x\in S}\EE\left(\Vert G_Z(x)\Vert^p\right)^{1/p}$ we observe that
\begin{align*}
\EE\left(\Vert F(x)+G_Z(x)\Vert^p\right)&\leq \left(\EE\left(\Vert G_Z(x)\Vert^p\right)^{1/p}+\Vert F(x)\Vert\right)^{p}\leq \left(\lambda_0~
\Vert x\Vert+c_{\lambda,p}\right)^{p}.
\end{align*}
Then, by choosing $r>0$ sufficiently large so that
$$
\frac{c_{\lambda,p}}{\lambda_0 r}\leq 1-\lambda_0\Longrightarrow
\lambda_0\,\left(1+\frac{c_{\lambda,p}}{\lambda_0 r}\right)\leq \lambda_1:=
1-(1-\lambda_0)^2
$$
we obtain that
\begin{align*}
\EE\left(\Vert F(x)+G_Z(x)\Vert^p\right)&\leq  
\lambda_0^p~
\Vert x\Vert^p~\left(1+c_{\lambda,p}/(\lambda_0 r)\right)^{p}
1_{\Vert x\Vert\geq r}+
\left(\lambda_0\,r+c_{\lambda,p}\right)^{p}\\
&\leq  \lambda_1^p~\Vert x\Vert^p+c_{\lambda,p}(r) 
\end{align*}
where $c_{\lambda,p}(r):=\left(\lambda_0\,r+c_{\lambda,p}\right)^{p}$. Using the above calculations we conclude that the function $W(x):=\frac{1}{2}+\Vert x\Vert^p$ is such that $P(W)\leq \lambda_1^p~W+(1/2+c_{\lambda,p}(r))$, showing that \ref{H1} holds with $V=W\in\Ba_{\infty}(S)$.

\subsubsection{Example 1: Random iterated functions with local dissipative conditions}
 Consider the functions $(F,F_z,G_z)$ defined in Section \ref{lyap-ref-if-intro} with  $\Za=\RR^m=S$ and 
$$
 F(x)=x+b(x)~h\quad \mbox{\rm and}\quad G_z(x)=z
$$
for some parameter $h>0$ and for a smooth function $b(x)$ with Lipschitz constant $\mbox{\rm lip}(b)>0$.

The following regularity conditions are discussed in the series of articles~\cite{bao,bao-hao,huang-21,monmarche-25,luo}:

$(B_1)$ There exist $r\geq 0$, $\lambda >0$ and $c\geq 0$ such that for any $x\not=y$ we have
\begin{equation}\label{cond-conv}
\frac{(x-y)^{\top}(b(x)-b(y))}{\Vert x-y\Vert^2}\leq -\lambda~1_{\Vert x\Vert\vee \Vert y\Vert\geq r}+ c~1_{\Vert x\Vert\vee \Vert y\Vert< r}
\end{equation}

$(B_2)$ There exist $r\geq 0$, $\lambda >0$ and $c\geq 0$ such that for any $x\not=y$ we have
\begin{equation}\label{cond-conv-b2}
\frac{(x-y)^{\top} (b(x)-b(y))}{\Vert x-y\Vert^2}\leq -\lambda~1_{\Vert x- y\Vert\geq r}+ c~1_{\Vert x- y\Vert< r}
\end{equation}

The case $r=0$ is equivalent to the uniform dissipative condition.
When $Z$ is a Gaussian random variable with covariance $\EE(ZZ^{\top})=hI$ the model coincides with the  transition kernel of the discretized diffusion process, under a contractivity at infinity condition (on the drift (\ref{cond-conv}))  discussed in~\cite{monmarche-25}. 
 We point out that more general $\alpha$-stable random variables $Z$ are discussed in~\cite{huang-21} and  that condition (\ref{cond-conv-b2}) coincides with the  local dissipative drift-type condition discussed in the articles~\cite{bdurmus-19,durmus-24,huang-21,luo}.

By applying (\ref{cond-conv}) or (\ref{cond-conv-b2}) with $y=0$  we have
\begin{equation*} 
\Vert x\Vert\geq r\geq r_{1}:=\frac{2\Vert b(0)\Vert}{\lambda }\Longrightarrow
x^{\top} b(x)\leq -\frac{\lambda}{2}~~\Vert x\Vert^2
\end{equation*}
which implies that
\begin{equation*}
x^{\top} b(x)\leq -\frac{\lambda}{2}~\Vert x\Vert^2+c_{r}(1)
\quad \mbox{\rm with}\quad c_r(1)=r~\sup_{\| x\|\leq r}\Vert b(x)\Vert.
\end{equation*}
We now let $h$ and $r$ be such that
$$
2~\frac{c_1(r)+rh\Vert b(0)\Vert}{r^2}\leq \lambda_h:=\frac{h}{2}~\left(\lambda -\mbox{\rm lip}(b)^2h\right) <1.
$$
In this case we have
\begin{align*}
\Vert F(x)-F(0)\Vert^2&= \Vert x\Vert^2+2x^{\top}~(b(x)-b(0))~h+\Vert b(x)-b(0)\Vert^2~h^2\\
&\leq  \Vert x\Vert^2\left(1-(\lambda-\mbox{\rm lip}(b)^2h)~h+2~\frac{c_1(r)+rh\Vert b(0)\Vert}{r^2}\right)~1_{\Vert x\Vert\geq r}+c_r(2)\\
&\leq  \left(1-\lambda_h\right)~\Vert x\Vert^2+c_r(2)
\end{align*}
for some constant  $c_r(2)>0$, implying that
$$
\Vert F(x)-F(0)\Vert\leq 
\lambda(0)~\Vert x\Vert+c(0)\quad \mbox{\rm for some}\quad
0<\lambda(0)<1\quad\mbox{\rm and}\quad
c(0)>0.
$$
This shows that condition  \eqref{hypFx}  is met with $x_0=0$ and   the results of the previous subsection can be used to obtained a function $V\in \Ba_{\infty}(S)$ with compact level sets such that \ref{H1} holds. 

Specifically, if  $Z$ is a Gaussian random variable it follows from the calculations in Section \ref{lyap-ref-if-intro} that \ref{H1} is met when $V=\exp(\delta\|x\|)$ for some 
$\delta>0$. If we further assume that the function $b$ is continuous, it follows from the discussion of Section \ref{sub:locRF} that condition \ref{H2} is met with $\kappa=\varphi_0$. In this context,    the calculations in Section \ref{sub:expoWasser} show that, for any $p\geq 1$ and with $\psi_S(x,y)=\|x-y\|$,    the following exponential contraction  estimate for the Wasserstein p-distance holds:
 \begin{align*}
\WW_{\psi_S,p}\left(\mu_1P_n,\mu_2P_n\right) \leq c \lambda^n\|\mu_1-\mu_2\|^{1/p}_V,\quad\forall n\geq 0,\quad\forall\mu_1,\mu_2\in\mathcal{P}_V(S).
\end{align*}

\subsubsection{Example 2: A fixed point theorem\label{sub:fp}}

Let  $F: S \rightarrow S$ be a
mapping such that \eqref{hypFx} holds for some $x_0\in S$ and parameters $\lambda(x_0)\in]0,1[$ and $c(x_0)\geq 0$. Next, we let $\delta>0$, $W(x)=\exp{(\delta\Vert x\Vert)}$   and assume that there is an $r_0>0$ such that, for all $r\geq r_0$,  there exists an $\alpha(r)\in ]0,1[$ such that
\begin{align}\label{eq:locF}
 W(x)+W(y)\leq r\implies \|F(x)-F(y)\|\leq (1-\alpha(r))\|x-y\|.
\end{align}
Condition \eqref{eq:locF} implies that $F$ is  uniformly locally contractive (\cite{ciesielski-18}, Definition 2.1),  and by a theorem of Edelstein~\cite{edelstein-61}  this local contraction property implies that $F$ has a unique fixed point $y^*\in S$ (see also Remark 2.2 and Theorem 3.5 in~\cite{ciesielski-18}). Below we use Corollary \ref{cor:invariant} to show this result, and Theorem \ref{thm:expo} to prove that the sequence $(y_n)_{n\geq 1}$ defined for some $y_0\in S$ by the recursion $y_n=F(y_{n-1})$, $n\geq 1$,  converges exponentially fast to $y^*$.

To this aim we consider the Markov kernel $P(x,dy)=\delta_{F(x)}(dy)$. Then, following the discussion of Section \ref{lyap-ref-if-intro}, condition \ref{H1} is met with $V=W$, while condition \eqref{eq:locF} implies that   \ref{H2} holds with the $V$-semi-distance $\kappa=\|\cdot\|$. Therefore,  Corollary \ref{cor:invariant} ensures that $P$ has a unique invariant distribution $\mu\in\mathcal{P}_V(S)$. The measure $\mu$ is clearly a Dirac mass, and thus there exists a unique  $y^*\in S$ such that
\begin{align*}
P(y^*,d y)=\delta_{y^*}(d y)\Leftrightarrow \delta_{F(y^*)}(d y)=\delta_{ y^*}(d y)\Leftrightarrow F(y^*)=y^*
\end{align*}
implying that $F$ has a unique fixed point $y^*\in S$. Moreover, by using the first result of Theorem \ref{thm:expo}, it follows that there exist constants $c>0$ and $\lambda\in]0,1[$ such that
\begin{align*}
\|y_n-y^*\|\leq c\lambda^n \varphi_V(y_0,y^*),\quad\forall n\geq 0.
\end{align*}

\subsection{Diffusion models}\label{sub:diffusion}

\subsubsection{A general diffusion model}
 
We let  $S=\RR^m$, $\psi_S(x,y)=\|x-y\|$ and we consider below the continuous time Markov semigroup $(Q_t)_{t\in]0,\infty[}$ of a diffusion flow $y\in S\mapsto X_{t}(y) $   defined  by
\begin{equation}\label{ref-X-diff-stable}
dX_{t}(y)=b(X_{t}(y))\,dt+\tau(X_t(y))\, dB_t\quad \mbox{\rm with}\quad X_0(y)=y.
\end{equation}
In the above display, $B_t$ is an  $m_1$-dimensional  Brownian motion starting at the origin    and $b$ is a differentiable drift function from  $\RR^m$  into itself satisfying, for some constants $a>0$ and $c\geq 0$, 
\begin{equation}\label{xpb}
 x^{\top}b(x)\leq -a\,\Vert x\Vert^2+c,\quad\forall x\in S.
\end{equation}
Moreover,  $\tau $ stands for some  $(m\times m_1)$-matrix  valued continuous function such that, for all $x\in S$, the matrix $\tau(x)\tau(x)^{\top}$ is invertible and the map
 $x\mapsto \Vert \tau(x)\Vert/\Vert x\Vert$ vanishes at infinity. In what follows, for any $x\in S$ we  let  
$$
\Vert \tau(x)\Vert_F^2:=\tr(\tau(x)\tau(x)^{\top})\leq  m_1  \Vert \tau(x)\Vert_2^2\quad\text{  with } 
 \Vert \tau(x)\Vert_2^2=\lambda_{\max}(\tau(x)\tau(x)^{\top}) 
$$
and for all   $p\geq 1$ we let $W^{p}(x):=\Vert x\Vert^{p}$.

Under the above assumptions on the functions $b$ and $\tau$, by using It\^o calculus one can check that  the generator $\Ga$  of the semi-group $(Q_t)_{t\in]0,\infty[}$ is such that
\begin{eqnarray*}
\Ga(W^{2p})(x)&\leq p~W^{2p}(x)~\left(-2a+\frac{2c+\Vert \tau(x)\Vert_F^2}{\Vert x\Vert^2}+2(p-1)\frac{\Vert \tau(x)\Vert_2^2}{\Vert x\Vert^2}\right),\quad \forall p\geq 1
\end{eqnarray*}
with the constants $a$ and $c$ as in (\ref{xpb}). Choosing $r>0$ sufficiently large so that
\begin{eqnarray*}
\Vert x\Vert >r&\Longrightarrow&
2c+\Vert \tau(x)\Vert_F^2+2(p-1)\Vert \tau(x)\Vert_2^2\leq a~\Vert x\Vert^2\\
&\Longrightarrow&\Ga(W^{2p})(x)\leq -ap~W^{2p}(x)~
\end{eqnarray*}
This result shows that for all $p\geq 2$ there exists  constants $a_p>0$ and $c_{p}\geq 0$ such that
\begin{align}\label{eq:L_Diff}
\Ga(V^{p})\leq -a_p~ V^{p}+c_{p}~~\text{ with }~~  V^{p}:=\frac{1}{2}+W^{p}
\end{align}
and thus such that \eqref{V-L-ex}  holds with $V=V^p$.

We now let $h\in]0,1[$, $p\geq 2$ and $P=Q_h$. Then,   following the discussion of Section \ref{sub:continuous}, the result in \eqref{eq:L_Diff} implies that for $P=Q_h$ condition \ref{H1} is met for $V=(1/2)+\|x\|^{p}$. We now assume that for $P=Q_h$ conditions \ref{H2} is met with either $\kappa=\varphi_0$ or with a $V$-semi-distance $\kappa$ such that $\kappa(x,y)\geq \psi_S(x,y)^p$.   In this context, it follows from \eqref{eq:T} that there exist constants $c_p>0$ and $\lambda_p\in]0,1[$ such that
\begin{align*}
\WW_{\psi_S,p}\left(\mu_1 Q_t,\mu_2 Q_t\right)^p\leq D_{\kappa}\left(\mu_1 Q_t,\mu_2 Q_t\right) \leq c_p \lambda_p^{t}\|\mu_1-\mu_2\|_V,\quad\forall t\geq 0,\quad\forall \mu_1,\mu_2\in\mathcal{P}_V(S).
\end{align*}
 In particular, we observe that if the functions $b$ and $\tau$ are both continuous then, for $P=Q_h$, the local minorization condition  \eqref{loc-min}  is met since $V\in \Ba_{\infty}(S)$, in which case \ref{H2}  indeed holds with $\kappa=\varphi_0$.  

\subsubsection{The overdamped Langevin diffusion model}\label{langevin-sec}

An important class of diffusion processes arising in statistical physics and applied probability is the
overdamped Langevin diffusion flow $X_t(y)$,  defined as in (\ref{ref-X-diff-stable}) with  $(m_1,\tau(x))=(m,\sigma I)$ for some $\sigma>0$ and the drift function $b(x)=-\gamma\nabla U(x)$ associated with some parameter $\gamma>0$ and smooth potential function $U$ satisfying the following condition
\begin{equation}\label{ex-lg-cg-v2}
\nabla^2 U(x)\geq 
\lambda~1_{\Vert x\Vert \geq  r}~I-c 
~1_{\Vert x\Vert< r}~I\quad\mbox{for some $\lambda>0$, $c\geq 0$ and $r\geq 0$.}
\end{equation}
The local Hessian condition (\ref{ex-lg-cg-v2}) is often termed ``strongly convexity outside a ball''. To avoid confusions, we underline that this condition strongly differs from the strong convexity of $U$ 
on the non convex domain $\{x\in S:\Vert x\Vert\geq r\}$ discussed in~\cite{ma,peters,yan}.

In our context, we recall there exists some bounded function $\widetilde{U}$ such that
$\nabla^2 (U+\widetilde{U})\geq a~I$.  For an explicit  construction of such function we refer to~\cite{monmarche-AHL}. Let $(\widetilde{Q}_t)_{t\in]0,\infty[}$  be the continuous time Markov semigroup of the stochastic flow $\widetilde{X}_t(y)$ defined as $X_t(y)$ by replacing the drift function $b(x)=-\gamma\nabla U(x)$ by  
$$
\widetilde{b}:=-\nabla (U+\widetilde{U})\quad \Longrightarrow\quad
\nabla \widetilde{b}(x)\leq -a~I.
$$
Following the proof of Proposition 2.16 in~\cite{ado-24}, for any $\delta>0$ and $h>0$ there exist  some constants  $\delta_h>0$ and $c_h>0$ such that 
\begin{align}\label{eq:VV1}
W(x):=\exp(\delta\|x\|)\Longrightarrow
\sup_{0\leq h'\leq 1}\Vert \widetilde{Q}_{h'}(W)/W\Vert<\infty  \text{ and }  
\widetilde{Q}_{h}(W)/W\leq c_h/W^{\delta_h}.
\end{align}

On the other hand, using the stochastic interpolation formula (cf.~Theorem 1.2 in~\cite{dss-19})
we check the almost sure estimate uniform estimate
\begin{equation}\label{uXoX}
\vert  X_t(x)-\widetilde{X}_t(x)\vert\leq c
\quad\mbox{\rm for some finite constant $c<\infty$}.
\end{equation}
By combining \eqref{eq:VV1} and \eqref{uXoX}, we obtain   that
$$
Q_{h}(W)/W\leq
e^{\delta c}~\widetilde{Q}_h(W)/W\leq c_h~e^{\delta c}~/W^{\delta_h}.
$$
Recalling that $W\in \mathcal{B}_\infty(S)$, we conclude that condition (\ref{ex-lg-cg-v2}) ensures the existence of constants $c_h>0$ and $\epsilon_h\in]0,1[$ such that
\begin{equation}\label{lyap-ref-3}
\begin{array}{l}
\sup_{0\leq h'\leq 1}\Vert Q_{h'}(W)/W\Vert<\infty \quad\text{ and } \quad
Q_{h}(W)\leq  \epsilon_h W+c_h  
\end{array}
\end{equation}
and thus, for $P=Q_h$, condition \ref{H1} is met with $V=W$. In addition, it is easily checked for $P=Q_h$ the local minorization condition  \eqref{loc-min}  is met since $V\in\mathcal{B}_\infty(S)$, and thus for $P=Q_h$ condition \ref{H2} holds with $\kappa=\varphi_0$. Since in addition we have $\sup_{0\leq h'\leq 1}\Vert Q_{h'}(V)/V\Vert<\infty$ by \eqref{lyap-ref-3}, the estimate \eqref{eq:T2} holds for any $V$-semi-distance $\phi$, as shown in Section \ref{sub:continuous}. In particular, since $V$ is an exponential-type Lyapunov function,  following the discussion in Section \ref{sub:expoWasser} there exist  constants $c>0$ and $\lambda\in]0,1[$ such that
\begin{align*}
\WW_{\psi_S,p}\left(\mu_1 Q_t,\mu_2 Q_t\right)^p   \leq c~ \lambda^t~\|\mu_1-\mu_2\|_V,\quad\forall t\geq 0,\quad\forall p\geq 1,\quad \forall\mu_1,\mu_2\in\mathcal{P}_V(S)
\end{align*}
where for all $p\geq 1$ the constant $c_p$ is as defined in \eqref{cond-intro-exp}.

\section{Proofs\label{sec:proof}}

\subsection{Proof of  Lemma \ref{lemma:ced}}\label{ced-proof}
 
\begin{proof}
We clearly have the estimate
$$
\beta_{\psi,\phi}(P)\geq \ell_{\psi,\phi}^{\star}(P):=\sup_{(x,y)\in S^2~:~\psi(x,y)>0}\ell_{\psi,\phi}(P)(x,y)
$$
and thus, recalling that since $\psi$ is a semi-distance we have $\psi(x,y)>0$ if and only if $x\neq y$, to prove the lemma it remains to show that
\begin{align}\label{eq:toShow}
\beta_{\psi,\phi}(P)\leq \ell_{\psi,\phi}^{\star}(P).
\end{align}

To this aim, following \cite{Villani}, page 67, we remark that since  $\phi$ is l.s.c.~we have $\phi(x,y)=\lim_{k\rightarrow\infty}\phi_k(x,y)$ for all $x,y\in S$,  where for all $k\geq 1$ the function $\phi_k$ is defined by
\begin{align}\label{eq:psi_k}
\phi_k(x,y)=\inf_{x',y'\in S}\Big\{\min\Big(\phi(x',y'),k\Big)+k\Big(\psi_S(x,x')+\psi_S(y,y')\Big)\Big\},\quad x,y\in S.
\end{align}
It is easy to verify that  $(\phi_k)_{k\geq 1}$ is   a non-decreasing sequence of bounded and   continuous functions such that $0\leq \phi_k(x,y)\leq\min(\phi(x,y),k)$ for all $k\geq 1$ and all $x,y\in S$. 

As a first step towards proving \eqref{eq:toShow} we show that
\begin{align}\label{eq:D_lim}
D_\phi(\mu_1,\mu_2)= \limsup_{k\rightarrow\infty}D_{\phi_k}(\mu_1,\mu_2),\quad\forall\mu_1,\mu_2\in\mathcal{P}_V(S).
\end{align}
To do so we let $\mu_1,\mu_2\in\mathcal{P}_V(S)$ be arbitrary and   remark that the set $\Pi(\mu_1,\mu_2)$ is compact  w.r.t.~the topology of the weak convergence; this follows from Lemma 4.4 in \cite{Villani} and Prokhorov's theorem. Next,   for any $k\geq 1$ we let $\pi_k\in\Pi(\mu_1,\mu_2)$ be such that $D_{\phi_k}(\mu_1,\mu_2)=\pi_k(\phi_k)$; remark that such a measure $\pi_k$ exists since $\phi_k$ is a $V$-semi-distance   and $S$ is Polish. Since the set $\Pi(\mu_1,\mu_2)$ is compact, the  sequence $(\pi_k)_{k\geq 1}$ admits a subsequence $(\pi_{k_i})_{i\geq 1}$ that converges weakly to some measure $\tilde{\pi}\in\Pi(\mu_1,\mu_2)$. Then, noting that $\phi_l\leq \phi_k$ for all $k\geq l\geq 1$, and using the fact that the functions $(\phi_l)_{l\geq 1}$ are continuous and bounded, for any fixed $l\geq 1$ we have
\begin{align}\label{eq:limsup}
\tilde{\pi}(\phi_l)=\lim_{i\rightarrow\infty}  \pi_{k_i}(\phi_l)\leq \limsup_{i\rightarrow\infty}\pi_{k_i}(\phi_{k_i})= \limsup_{i\rightarrow\infty}D_{\phi_{k_i}}(\mu_1,\mu_2)\leq  \limsup_{k\rightarrow\infty}D_{\phi_k}(\mu_1,\mu_2).
\end{align}
On the other hand, we have
\begin{align}\label{eq:D_up}
 \limsup_{k\rightarrow\infty}D_{\phi_k}(\mu_1,\mu_2)\leq D_\phi(\mu_1,\mu_2)\leq   \tilde{\pi}(\phi)=\lim_{l\rightarrow\infty}\tilde{\pi}(\phi_l)
\end{align}
where the first inequality holds since $\phi_k\leq \phi$ for all $k\geq 1$, the second inequality holds since $\tilde{\pi}\in\Pi(\mu_1,\mu_2)$, while the equality holds by the monotone convergence theorem. Then, combining \eqref{eq:limsup} and \eqref{eq:D_up} shows \eqref{eq:D_lim}.

We now let $k\geq 1$ be arbitrary. Then, since the $V$-semi-distance $\phi_k$ is continuous it follows from Corollary 5.22 in \cite{Villani} that there exists a Markov transition $Q_k$ on $S^2$ such that, for all $x,y\in S$, the measure $\pi^{(k)}_{x,y}:=Q_k( (x,y),d(u,v))$ belongs to $\Pi(\delta_x P,\delta_y P)$ and satisfies $\pi^{(k)}_{x,y}(\phi_k)=D_{\phi_k}(\delta_x P,\delta_y P)$.

We now show that under the assumption of the lemma we have
\begin{equation}\label{pced2}
 D_{\phi_k}(\delta_{x}P,\delta_{y}P)= \int~\phi_k(u,v)Q_k((x,y),d(u,v))\leq~\ell_{\psi,\phi}^{\star}(P)  \psi(x,y),\quad\forall x,y\in S.
\end{equation}
Since $\phi_k\leq \phi$, the inequality in \eqref{pced2} clearly holds for any $x,y\in S$ such that  $\psi(x,y)>0$. Let $x,y\in S$ be such that $\psi(x,y)=0$. Then, since $\psi$ is a semi-distance, we have $x=y$. On the other hand, since $\phi$ is a also a semi-distance,  it follows from \eqref{eq:psi_k} that $\phi_k$ is  a semi-distance, and thus we have the implication  $x=y\implies  D_{\phi_k}(\delta_{x}P,\delta_{y}P)=0$. This shows that $\psi(x,y)=0\implies D_{\phi_k}(\delta_{x}P,\delta_{y}P)=0$, and thus the inequality in \eqref{pced2} also holds when $\psi(x,y)=0$. This concludes the proof of \eqref{pced2}.

To proceed further we remark that since $\psi$ is l.s.c.~there exists a measure $\pi\in \Pi(\mu_1,\mu_2)$ such that $D_{\psi}(\mu_1,\mu_2)=\pi(\psi)$, and we also remark that 
\begin{align}\label{eq:piQ}
(\pi Q_k)(d(u,v)):=\int~\pi(d(x,y))Q_k((x,y),d(u,v))\in 
\Pi(\mu_1P,\mu_2P).
\end{align}
Consequently, by using first \eqref{eq:piQ} and then by integrating both side of (\ref{pced2}) using the measure $\pi$ we have just defined, we obtain that
\begin{align}\label{eq:piQ3}
D_{\phi_k}(\mu_1P,\mu_2P)&\leq    (\pi Q_k)(\phi_k) \leq \ell_{\psi,\phi}^{\star}(P) \pi(\psi) =\ell_{\psi,\phi}^{\star}(P)D_{\psi}(\mu_1,\mu_2)
\end{align}
showing that if $D_{\psi}(\mu_1,\mu_2)>0$ then
\begin{align*}
\frac{D_{\phi_k}(\mu_1P,\mu_2P)}{D_{\psi}(\mu_1,\mu_2)}\leq \ell_{\psi,\phi}^{\star}(P).
\end{align*}
Together with \eqref{eq:D_lim}, this shows \eqref{eq:toShow} and the proof of the lemma is complete.

\end{proof}

\begin{rmk}
Under the additional assumption that $\phi$ is continuous, by Corollary 5.22 in \cite{Villani} there exists a Markov transition $Q$ on $S^2$ such that
\begin{equation}\label{pced3}
D_{\phi}(\delta_{x}P,\delta_{y}P)=~\int~\phi(u,v)~Q((x,y),d(u,v)),\quad\forall x,y\in S.
\end{equation}
In this case, \eqref{eq:toShow} directly follows by applying \eqref{eq:piQ3} with $Q_k$ replaced by $Q$. Assuming only that $\phi$ is l.s.c.,  \eqref{eq:D_lim} implies that the mapping $(x,y)\mapsto D_{\phi}(\delta_{x}P,\delta_{y}P) $ is measurable but it is unclear whether or not there exists  a Markov kernel $Q$ such that \eqref{pced3} holds.
\end{rmk}

\subsection{Proof of \eqref{P-V-norm-osc-equiv}}\label{P-V-norm-osc-equiv-proof}
Note that
$$
\begin{array}{l}
\sup{\{\mbox{\rm osc}_V(K(f))~:~f\in\Ba(S)~;~\mbox{\rm osc}_W(f) \leq 1 \}}\\
\\
\displaystyle=\sup_{(x,y)\in S^2}\frac{1}{\varpi_V(x,y)}~\sup{\{|(\delta_{x}K)(f)-(\delta_{y}K)(f)|~:~f\in\Ba(S)~;~\mbox{\rm osc}_W(f) \leq 1 \}}\\
\\
\displaystyle=\sup_{(x,y)\in S^2}\frac{\Vert\delta_{x} K-\delta_{y} K\Vert_W}{\varpi_V(x,y)}=\sup_{(x,y)\in S^2~:~x\neq y}\ell_{\varphi_V,\varphi_W}(K)(x,y).
\end{array}$$
On the other hand, 
by \eqref{kr2-dual} we check that
$$
\begin{array}{l}
\Vert \mu_1 K-\mu_2 K\Vert_W\\
\\
=\sup{\left\{ \mbox{\rm osc}_V(K(f))\times \vert(\mu_1-\mu_2)[K(f)/\mbox{\rm osc}_V(K(f))]\vert~:~f\in\Ba(S)~;~\mbox{\rm osc}_{W}(f) \leq 1 \right\}}\\
\\
\leq \sup{\{\mbox{\rm osc}_V(K(f))~:~f\in\Ba(S)~;~\mbox{\rm osc}_W(f) \leq 1 \}}\times
\Vert \mu_1 -\mu_2 \Vert_V
\end{array}$$
This implies that
$$
\beta_{\varphi_V,\varphi_W}(K)\leq \sup_{(x,y)\in S^2~:~x\neq y}\ell_{\varphi_V,\varphi_W}(K)(x,y)$$
The reverse inequality is obtained choosing Dirac measures
$(\mu_1,\mu_2)=(\delta_{x},\delta_{y})$ in \eqref{V2W-ref}. This ends the proof of 
(\ref{P-V-norm-osc-equiv}).\cqfd

\subsection{Proof of Lemma~\ref{klem}}\label{klem-proof}
 
The first statement in \eqref{SJ} is trivial. To prove the second statement in \eqref{SJ} let $x,y\in S$  be such that $\varphi(x,y)>0$ and $\pi_{x,y}\in\Pi(\delta_xP,\delta_yP)$ be such that $D_{\varphi}(\delta_{x}P,\delta_{y}P)=\pi_{x,y}(\varphi)$. Then, by Lemma \ref{lemma:ced}, we have
$$
D_{\varphi}(\delta_{x}P,\delta_{y}P)=\pi_{x,y}(\varphi)\leq \beta_{\varphi}(P)~
\varphi(x,y).
$$
By Jensen's inequality, for any $\iota\in]0,1[$  we have
$$
D_{\varphi^{\iota}}(\delta_{x}P,\delta_{y}P)\leq \pi_{x,y}(\varphi^{\iota})\leq 
\pi_{x,y}(\varphi)^{\iota}=D_{\varphi}(\delta_{x}P,\delta_{y}P)^{\iota}
\leq \beta_{\varphi}(P)^{\iota}~
\varphi(x,y)^{\iota}
$$
implying that $D_{\varphi^{\iota}}(\delta_{x}P,\delta_{y}P)\leq \beta_{\varphi}(P)^{\iota}~
\varphi(x,y)^{\iota}$ which, by Lemma \ref{lemma:ced}, implies that we have  $\beta_{\varphi^{\iota}}(P)\leq 
\beta_{\varphi}(P)^{\iota}$. This ends the proof of  \eqref{SJ}.

 To show \eqref{compCost} let   $\psi$ be a $V$-semi-distance    such that  $a\,\varphi \leq \psi\leq b\,\varphi$ for some constants $0<a\leq b$. When $\beta_{\psi}(P)=\infty$ the estimate  \eqref{compCost} is immediate. When
$\beta_{\psi}(P)<\infty$ for any $\mu_1,\mu_2\in\mathcal{P}_V(S)$ we have
$$
a\,D_{\varphi }(\mu_1P,\mu_2P)\leq
D_{\psi}(\mu_1P,\mu_2P)\leq 
\beta_{\psi}(P)~D_{\psi}(\mu_1,\mu_2)\leq 
b\,\beta_{\psi}(P)~D_{\varphi}(\mu_1,\mu_2)
$$
which implies that $D_{\varphi}(\mu_1P,\mu_2P)\leq
(b/a)\,\beta_{\psi}(P)~D_{\varphi}(\mu_1,\mu_2)$. The result in \eqref{compCost} follows.
 
  To show \eqref{pre-Tex} we let   $\psi$ be a $V$-semi-distance  such that $\varphi \leq \psi$ and such that $\beta_{\psi}(P)<1$. Next,  we let $x,y\in S$ be such that $\psi(x,y)>0$ and $\pi_{x,y}\in\Pi(\delta_xP,\delta_yP)$ be such that $D_{\psi}(\delta_{x}P,\delta_{y}P)=\pi_{x,y}(\psi)$. Then,  
 \begin{align}\label{eq:nu_12}
D_{\varphi }(\delta_{x}P,\delta_{y}P)\leq
\pi_{x,y}(\varphi )
\leq \pi_{x,y}(\psi)=
D_{\psi}(\delta_{x}P,\delta_{y}P)\leq \beta_{\psi}(P)\,\psi(x,y)
\end{align}
where the last inequality holds by Lemma \ref{lemma:ced}. Using \eqref{eq:nu_12},  it follows that for any $\iota>0$ we have
$$
\frac{D_{\iota\varphi +\psi}(\delta_{x}P,\delta_{y}P)}{\iota\varphi(x,y)+\psi(x,y)}\leq
\frac{\pi_{x,y}(\iota\varphi +\psi)}{\iota\varphi(x,y)+\psi(x,y)}\leq (1+\iota)~
 \beta_{\psi}(P)~\frac{\psi(x,y)}{\iota\varphi(x,y)+\psi(x,y)} 
$$
 and thus, for any $\delta\in ]0,1[$, by Lemma \ref{lemma:ced} we have the implication:
\begin{align}\label{pre-pre-Tex}
\beta_{\psi}(P)<\frac{\delta}{1+\iota}\Longrightarrow
\beta_{\iota\varphi+\psi}(P)\leq 
(1+\iota)~
 \beta_{\psi}(P)\leq \delta 
\end{align}
showing \eqref{pre-Tex}. Lastly, noting that   for $\iota\leq  1-\beta_{\psi}(P) $ we have $(1+\iota)\beta_{\psi}(P)\leq  1-(1-\beta_{\psi}(P))^2 $, it follows from  \eqref{pre-pre-Tex} that
\begin{eqnarray*}
\beta_{\iota\varphi +\psi}(P)\leq  1-(1-\beta_{\psi}(P))^2
\end{eqnarray*}
showing \eqref{Tex}. This ends the proof of the lemma.\cqfd
 
 \subsection{Proof of Theorem \ref{th1-intro}\label{p-th1-intro} }
 
The contraction estimate for the  weighted discrete metric  $\varphi_{\rho}$ stated in  \eqref{re-3}  is known, but  for sake of completeness a detailed proof    is provided below in Section \ref{p-re-3}. The estimate  \eqref{re-3-cor}  is a consequence of Lemma~\ref{klem} combined with  \eqref{re-3}. To see this let $r>r_\epsilon\vee r_0$ so that $\beta:= \beta_{\varphi_{\rho_{\epsilon}(r)}}(P)\in]0,1[$ by  \eqref{re-3}, and let $\phi$ be a $V$-semi-distance  such that $\|\phi/\varphi_V\|\leq (1-\beta)\rho_\epsilon(r) $. Then,    using \eqref{VVr} for the third inequality,
$$
 \phi \leq \|\phi/\varphi_V\|\varphi_{V} \leq   (1-\beta)\rho_\epsilon(r)\, \varphi_{V}\leq   (1-\beta)\varphi_{\rho_\epsilon(r)} 
$$
and \eqref{re-3-cor}   follows by applying \eqref{Tex} with $\iota=\|\phi/\varphi_{\rho_\epsilon(r)}\|$, $\varphi=\phi/\iota$ and $\psi=\varphi_{\rho_\epsilon(r)}$.  The proof of \eqref{re-upsilon} is  presented below in Section \ref{p-re-upsilon}.

\subsubsection{Proof of \eqref{re-3}\label{p-re-3}}

 Let $x,y\in S$ be such that $x\neq y$ and $r>r_{\epsilon}\vee r_0$ be fixed in what follows. 
In addition, below   the functions $(\varpi_{\rho},\varphi_\rho)$ indexed by $\rho>0$ are as defined in (\ref{vero}) and (\ref{def-phi-rho}).

Assume first that $\varpi_V(x,y)\geq r$. In this scenario, for all $\rho>0$  we have
\begin{equation}\label{eq:twoParts}
\begin{split}
\ell_{\varphi_{\rho}}(P)(x,y)&\leq \ell_{\varpi_{\rho}}(P)(x,y)= \frac{1 +\rho P(V)(x)+\rho P(V)(y)}{1+\rho \varpi_V(x,y)}\\
&\leq \frac{1+\rho  \big(1+\epsilon   \varpi_V(x,y)\big)}{1+\rho    \varpi_V(x,y)}\\
&\leq \epsilon+\frac{1+\rho-\epsilon}{1+\rho r}= 1-(1-\epsilon)~ \frac{\rho r}{1+\rho r}~\left(1-\frac{r_{\epsilon}}{r}\right)
\end{split} 
\end{equation}
where   the  second  inequality uses the assumption that \ref{H1} holds with $c=1/2$. Simple calculations show that 
\begin{align*}
(1-\epsilon)\frac{r\rho_{\epsilon}(r)}{1+r\rho_{\epsilon}(r)}\left(1-\frac{r_{\epsilon}}{r}\right)&=~\frac{\alpha(r)}{2}~\frac{1-\epsilon}{(1+\epsilon)+\alpha(r)/2}~\left(1-\frac{r_{\epsilon}}{r}\right) \geq~\frac{\alpha(r)}{2}~\delta_{\epsilon}(r)
\end{align*}
and thus we have the implication
\begin{align}\label{eq:kappa1}
\varpi_V(x,y)\geq r\implies \ell_{\varphi_{\rho_\epsilon(r)}}(P)(x,y)\leq \ell_{\varpi_{\rho_\epsilon(r)}}(P)(x,y)\leq 1-\delta_\epsilon(r)\frac{\alpha(r)}{2}.
\end{align}
In the reverse angle, if $\varpi_V(x,y)\leq r$ then, following similar calculations as in   \eqref{eq:twoParts},
\begin{align*}
 \ell_{\varphi_{\rho}}(P)(x,y)& \leq \frac{1-\alpha(r)+\rho  \big(1+\epsilon   \varpi_V(x,y)\big)}{1+\rho    \varpi_V(x,y)}\leq 1-\alpha(r)+\rho+r\epsilon\rho,\quad\forall \rho>0 
\end{align*}
from which we readily obtain that 
\begin{align}\label{eq:kappa2}
\varpi_V(x,y)\leq r\implies  \ell_{\varphi_{\rho_\epsilon(r)}}(P)(x,y)\leq 1-\frac{\alpha(r)}{2}\leq 1-\delta_\epsilon(r)\frac{\alpha(r)}{2}.
\end{align}
Combining \eqref{eq:kappa1} and \eqref{eq:kappa2} show that
\begin{align*}
\ell_{\varphi_{\rho_\epsilon(r)}}(P)(x,y) \leq 1-\delta_\epsilon(r)\frac{\alpha(r)}{2}
\end{align*}
and the result in \eqref{re-3} follows from Lemma~\ref{lemma:ced}.\cqfd

\subsubsection{Proof of \eqref{re-upsilon}\label{p-re-upsilon}}

In what follows the functions $(\varpi_{\rho},\varphi_\rho)$ indexed by $\rho>0$ are as defined in (\ref{vero}) and (\ref{def-phi-rho}), and we extend the  definition of the function $\ell_{\phi}(P):(x,y)\in S^2\rightarrow [0,\infty]$ defined in Section \ref{sub:contM} for a $V$-semi-distance $\phi$ to any l.s.c.~function $\phi: S^2\rightarrow [0,\infty)$.

We start with some preliminary calculations. We let $\rho>0$,  $\iota\in]0,1[$ and   $x,y\in S$ be such that $x\neq y$. Next,   we let  $\pi_{x,y}\in \Pi(\delta_{x}P,\delta_{y}P)$ be such that   $\pi_{x,y}(\kappa)=D_{\kappa}(\delta_{x}P ,\delta_{y}P)$, so that  $\ell_{\kappa}(P)(x,y)=\pi_{x,y}(\kappa)/\kappa(x,y)$. In addition,   noting  that
\begin{align}\label{eq:eq}
\pi_{x,y}\left(\varpi_\rho\right)=1+\rho\,P(V)(x)+\rho\,P(V)(y)=D_{\varpi_\rho}(\delta_{x}P,\delta_{y}P)
\end{align}
it follows that
$$
\ell_{\varpi_\rho}(P)(x,y)=\frac{\pi_{x,y}\left(\varpi_\rho\right)}{\varpi_\rho(x,y)}.
$$
Using the above expressions for $\ell_{\kappa}(P)(x,y)$ and  for $\ell_{\varpi_\rho}(P)(x,y)$, as well as H\" older inequality, we obtain that
\begin{equation}\label{eq:zeta}
\begin{split}
\ell_{\kappa_{\iota,\rho}}(P)(x,y)\leq \frac{\pi_{x,y}(\kappa_{\iota,\rho})}{\varpi_{\rho}(x,y)^{1-\iota}\kappa(x,y)^{\iota}}&\leq 
\ell_{\varpi_\rho}(P)(x,y)^{1-\iota}~\ell_{\kappa}(P)(x,y)^{\iota}\\
&\leq \ell_{\varpi_\rho}(P)(x,y)^{1-\iota} 
\end{split}
\end{equation}
where the last inequality uses the fact that $\beta_\kappa(P)\leq 1$ under \ref{H2}.

To prove \eqref{re-upsilon} let $x,y\in S$ be such that $x\neq y$ and  $r>r_{\epsilon}\vee r_0$ be fixed in what follows. Then,  
\begin{equation}\label{eq:e1}
\begin{split}
 \varpi_V(x,y)\geq  r\Longrightarrow 
\ell_{\kappa_{\iota,\rho_\epsilon(r)}}(P)(x,y)\leq \ell_{\varpi_{\rho_\epsilon(r)}}(P)(x,y)^{1-\iota}\leq  \displaystyle
\Big( 1-\delta_\epsilon(r)\frac{\alpha(r)}{2}\Big)^{1-\iota}
\end{split}
\end{equation}
where the first  inequality holds  \eqref{eq:zeta}  while the second inequality holds by \eqref{eq:kappa1}.

In the reverse angle,  if $\varpi_V(x,y)\leq  r$  we have, under the assumption that \ref{H1} holds with $c=1/2$,
$$
\ell_{\varpi_{\rho_\epsilon(r)}}(P)(x,y)\leq  1+\rho_\epsilon(r) (1+\epsilon\,r) \leq  1+\alpha(r) 
$$
where the second inequality holds since $\rho_\epsilon(r)\leq  \alpha(r)/(1+\epsilon r)$. Consequently,
\begin{equation}\label{eq:e2}
\begin{split}
\varpi_V(x,y)\leq  r\Longrightarrow \ell_{\kappa_{\iota,\rho_\epsilon(r)}}(P)(x,y)&\leq \ell_{\kappa}(P)(x,y)^{\iota}~\ell_{\varpi_{\rho_\epsilon(r)}}(P)(x,y)^{1-\iota} \\
&\leq (1-\alpha(r))^{\iota}~(1+\alpha(r))^{1-\iota}\\
&\leq (1-\alpha(r)^2)^{1-\iota}
\end{split}
\end{equation}
where first  inequality holds  \eqref{eq:zeta}  and
where the last inequality comes from the fact that, for any $1/2\leq \iota< 1$, we have
$$
(1-\alpha(r))^{\iota}~(1+\alpha(r))^{1-\iota}=
(1-\alpha(r))^{\iota-(1-\iota)}~(1-\alpha(r)^2)^{1-\iota}\leq (1-\alpha(r)^2)^{1-\iota}.
$$
Combining \eqref{eq:e1} and \eqref{eq:e2} show that
$$
\ell_{\kappa_{\iota,\rho_\epsilon(r)}}(P)(x,y)\leq \left(1-\alpha(r)\left(\alpha(r) \wedge  \frac{\delta_\epsilon(r)}{2}\right)\right)^{1-\iota} 
$$
and the result in \eqref{re-upsilon} follows from Lemma~\ref{lemma:ced}.\cqfd

\subsection{Proof of Theorem \ref{thm:expo}\label{p-thm:expo}}

We start by proving the   first part of the theorem.  Since for any   $V$-semi-distance   $\kappa$ we have $ \beta_{\varphi_V,\kappa}(P_n)\leq \|\kappa/\varphi_V\|~\beta_{\kappa}(P_n) $ for all $n\geq 1$ the result when $\iota<1$ follows from the first part of Theorem \ref{th1-intro-cor}. Hence, to prove the first  part of the theorem it remains to consider the case $\iota=1$.

To this aim, let    $\kappa$ be a $V$-semi-distance such that \ref{H2} holds and remark that, using \eqref{eq:Use_BB} and the remark made   just before the statement of Theorem \ref{th1-intro},  we can without loss of generality assume in what follows that \ref{H1} holds with $c=1/2$. Next, let $c_\kappa=\|\kappa/\varpi_V\|<\infty$   and $r_1>r_\epsilon \vee r_0$, and note that  for all $x,y\in S$  we have
\begin{align}\label{eq:varp}
{\kappa(x,y)}/{c_\kappa}\leq \varpi_V(x,y)
\end{align}

To proceed further let $\tilde{\kappa}:= (1/c_\kappa)\kappa$.  We remark that $\tilde{\kappa}$ is a $V$-semi-distance that also satisfies \ref{H2}, and we let   $\kappa'  :=(\varpi_{V}  \tilde{\kappa})^{1/2}$. Then, by using the first result stated in Theorem \ref{th1-intro-cor}, it follows that there exist    constants $c_0>0$ and $\lambda\in ]0,1[$ such that $\beta_{\kappa'}(P_{n} )\leq c_0\lambda^n$ for all $n\geq 0$. On the other hand, using \eqref{eq:varp} we have $\tilde{\kappa}\leq \kappa'\leq \varphi_{V}$, where the last inequality uses the fact that $\kappa(x,x)=0$ for all $x\in S$. Consequently, for all   $n\geq 0$ and $\mu_1,\mu_2\in\mathcal{P}_V(S)$ we have 
\begin{align*}
D_{\tilde{\kappa}}(\mu_1 P_n,\mu_2 P_n)\leq D_{\kappa'}(\mu_1 P_n,\mu_2 P_n)&\leq \beta_{\kappa'}(P_n) D_{\kappa'}(\mu_1,\mu_2)\leq c_0~\lambda^n~D_{\varphi_{V}}(\mu_1,\mu_2) 
\end{align*}
and thus, recalling that $\tilde{\kappa}= (1/c_\kappa)\kappa$, we obtain that
\begin{align}\label{eq:in_varphi}
D_{\kappa}\big(\mu_1 P_n,\mu_2 P_n\big)\leq  c_0\,c_\kappa \lambda^n~ D_{\varphi_{V}}(\mu_1,\mu_2),\quad\forall n\geq 0,\quad\forall\mu_1,\mu_2\in\mathcal{P}_V(S) 
\end{align}
showing the first part of the theorem follows.

To show the second  part of the theorem we assume that $\beta_{\kappa,\varphi_0}(P)<\infty$. In this context, there exists a constant $c>0$ such that 
\begin{align}\label{eq:Tv_kappa}
D_{\varphi_0}(\delta_x P_n,\delta_y P_n)\leq c D_{\kappa}(\delta_x P_{n-1},\delta_y P_{n-1}),\quad\forall n\geq 2,\quad\forall x,y\in S.
 \end{align}
We now let   $r_1>r_\epsilon$ be arbitrary. Then, by using
\eqref{eq:in_varphi} and \eqref{eq:Tv_kappa}, there exist constants $\alpha\in]0,1]$ and $m\in\mathbb{N}$ such that the following implication holds:
\begin{align}\label{eq:H2_2}
\varphi_V(x,y)\leq r_1\implies D_{\varphi_0}(\delta_x P_r,\delta_y P_r)\leq (1-\alpha)\varphi_0(x,y).
\end{align}
By using \eqref{eq:Use_BB} and by replacing $V$ by $\overline{V}:=(1-\epsilon)/2 +\big( \epsilon(1-\epsilon)/(2c)\big)V$ if necessary, we can without loss of generality assume that \ref{H1} holds with $c=(1-\epsilon)/2$, in which case
\begin{align}\label{eq:H1_2}
P_m(V)\leq \epsilon V+\frac{1}{2}.
\end{align}
Then, using \eqref{eq:H2_2} and \eqref{eq:H1_2},   we can repeat  word-for-word the calculations in the proof of  \eqref{re-3}, replacing $P$ by $P_m$,  $r$ by $r_1$ and  $\alpha(r)$ by $\alpha$, to obtain that there exist  constants $\rho>0$ and $\lambda\in]0,1[$ such that $\beta_{\varphi_\rho}(P_m)<\lambda$. Then, repeating word-by-word the proof of \eqref{res-H0}, it follows that there exist  constants $c>0$ and $\lambda\in]0,1[$ such that $\beta_{\varphi_V}(P_{nm})\leq c\lambda^n$ for all $n\geq 0$, and since $\beta_{\varphi_V}(P)<\infty$ under \ref{H1}, we obtain that  $\beta_{\varphi_V}(P_{nm})\leq c'\lambda^n$ for all $n\geq 0$ and some constant $c'>0$.  Consequently, for any $V$-semi-distance function $\phi$ and measures $\mu_1,\mu_2\in\mathcal{P}_V(S)$, we have
\begin{align*}
D_{\phi}\big(\mu_1  P_n,\mu_2 P_n\big)\leq \|\phi/\varphi_V\|D_{\varphi_V}\big(\mu_1 P_n,\mu_2 P_n\big)\leq \|\phi/\varphi_V\|c' \lambda^n D_{\varphi_V}(\mu_1,\mu_2),\quad\forall n\geq 0.
\end{align*}
This ends the proof of the theorem.\cqfd

\subsection{Proof of   \eqref{cond-intro-p-norms} and  \eqref{cond-intro-exp}}\label{cond-intro-exp-proof}

For functions of the form $V(x)=1/2+\Vert x\Vert^p$ for some $p>0$ we have
\begin{equation}\label{p2w}
\frac{1}{2^{(p-1)_+}}~\Vert x-y\Vert^p\leq \Vert x\Vert^p+\Vert y\Vert^p\leq \varpi_V(x,y)\end{equation}
This ends the proof of   (\ref{cond-intro-p-norms}). For
functions of the form
$V(x)=1+e^{\delta\Vert x\Vert}$, for any $q\geq 1$ we have
\begin{equation}\label{ref-V-pw}
\varpi_{e}(x,y)\leq2\left(e^{\delta(\Vert x\Vert+\Vert y\Vert)/2}-1\right)\leq  2~e^{\delta\Vert x\Vert/2}~e^{\delta\Vert y\Vert/2}\leq \varpi_V(x,y)
\end{equation}
This ends the proof of (\ref{cond-intro-exp}).\cqfd

\subsection{Proof of (\ref{RFI-arcsin}) and (\ref{RFI-0-2})}\label{RFI-arcsin-proof}

Note that
$$
V(x)\geq V(1/2)=2^{1-\iota}>0\Longrightarrow V\in \Ba_0(S)
$$
For any $\iota\in ]0,1/2[$ there exists some finite constant $c>0$ such that
$$
P(V)(x)\leq \epsilon_{\iota}~V(x)+c\quad\mbox{\rm with}\quad
\epsilon_{\iota}:=\frac{1}{2(1-\iota)}<1
$$
We check this claim using the formula
\begin{eqnarray*}
2(1-\iota)~P(V)(x)&=&\frac{1}{x}\left(x^{1-\iota}+\left(1-(1-x)^{1-\iota}\right)\right)+\frac{1}{1-x}\left((1-x)^{1-\iota}+\left(1-x^{1-\iota}\right)\right)\\
&=&V(x)+f(x)+f(1-x)
\end{eqnarray*}
with the uniformly bounded function
$$
\begin{array}{l}
\displaystyle f(x):=\frac{1}{x}\left(1-(1-x)^{1-\iota}\right)=\frac{1}{(1-x)^{\iota}}+
\frac{1}{x}~\left(1-\frac{1}{(1-x)^{\iota}}\right)\leq \frac{1}{(1-x)^{\iota}}\wedge 
\frac{1}{x}
\\
\\
\displaystyle\Longrightarrow\forall \delta\in ]0,1[\qquad
0\leq f(x)\leq \frac{1}{\delta^{\iota}}~ 1_{
\delta\leq 1-x<1}+\frac{1}{1-\delta}~
1_{1-\delta<x<1}
\end{array}
$$
This ends the proof of (\ref{RFI-arcsin}).

To prove (\ref{RFI-0-2}), 
consider the parameters $\iota\in ]0,1[$ such that
$$
 \iota\leq1\wedge \gamma\quad \mbox{\rm and}\quad 0<
\frac{\iota}{2}<\frac{1-\delta}{1+(1-\delta)}~\left(\leq \frac{1}{2}\right)
$$
Observe that
$$
\frac{1}{x}~\int_{0}^x
~\left(\frac{1}{y^{\iota}}+y\right)~dy=\frac{1}{1-\iota}~\frac{1}{x^{\iota}}+\frac{x}{2}
$$
We also check that
$$
\int_x^{\infty}\left(\frac{1}{y^{\iota}}+y\right)\sqrt{\frac{2}{\pi}}~e^{-(y-x)^2/2}dy\leq 
\frac{1}{x^{\iota}}+x+c_0\quad\mbox{\rm with}\quad c_0:=\int_0^{\infty}z\sqrt{\frac{2}{\pi}}~e^{-z^2/2}dz
$$
\begin{eqnarray*}
P(V)(x)&\leq& \frac{\delta}{2}\left(\frac{1}{1-\iota}~\frac{1}{x^{\iota}}+\frac{x}{2}
+\frac{1}{x^{\iota}}+x+c_0\right)+(1-\delta)~\nu(V)\\
&\leq &\frac{\delta}{2}~\left(1+\frac{1}{1-\iota}\right)~\frac{1}{x^{\iota}}+
 \frac{\delta}{2}~\left(1+\frac{1}{2}\right)~x+c_1\quad \mbox{\rm with}\quad
 c:=\frac{\delta}{2}~c_0+(1-\delta)~\nu(V)\\
 &\leq &\epsilon~V(x)+c\quad
 \mbox{\rm with}\quad \epsilon:=\frac{\delta}{2}~\left(1+\frac{1}{1-\iota}\right)<1
\end{eqnarray*}
This ends the proof of  (\ref{RFI-0-2}).\cqfd


\begin{thebibliography}{99}

\bibitem{anari}
N. Anari, K. Liu, S. O. Gharan, C. Vinzant. Log-concave polynomials II: high-dimensional walks and an FPRAS for counting bases of a matroid. In Proceedings of the 51st Annual ACM SIGACT Symposium on Theory of Computing (STOC), pp. 1--12 (2019).


\bibitem{adpm-25}
O.D. Akyildiz, P. Del Moral, J. Miguez. On the contraction properties of Sinkhorn semigroups. ArXiv preprint 2503.09887 (2025).	

\bibitem{adpm-24}
O.D. Akyildiz, P. Del Moral, J. Miguez. Gaussian entropic optimal transport: Schr\" odinger bridges and the Sinkhorn algorithm. ArXiv preprint arXiv:2412.18432 (2024).


\bibitem{ado-24} 
M. Arnaudon, P. Del Moral, E.  Ouhabaz. A Lyapunov approach to stability of positive semigroups: An overview with illustrations. Stochastic Analysis and Applications, vol. 42, no. 1, pp. 121--200 (2024).


\bibitem{bakry}
D. Bakry, I. Gentil, M. Ledoux, Analysis and geometry of Markov diffusion operators, Grundlehren der Mathematischen Wissenschaften [Fundamental Principles of Mathematical Sciences], vol. 348, Springer, Cham (2014).


\bibitem{bao}
J. Bao, M. B. Majka, J. Wang. Geometric ergodicity of modified Euler schemes for SDEs with super-linearity.  ArXiv preprint arXiv:2412.19377 (2024).

\bibitem{bao-hao}
J. Bao, J. Hao. $L^ 2$-Wasserstein contraction of modified Euler schemes for SDEs with high diffusivity and applications. ArXiv:2411.01731 (2024).
 
\bibitem{nawaf-20}
N. Bou-Rabee, A. Eberle, R. Zimmer.
Coupling and convergence for Hamiltonian Monte Carlo
Ann. Appl. Probab., vol. 30, no. 3, pp. 1209--1250 (2020)
 

 \bibitem{cai}
 X. Cai, J.D. McEwen M. Pereyra. Proximal nested sampling for high-dimensional Bayesian model selection. Statistics and Computing, vol. 32, no. 5:87 (2022).
 
 
 \bibitem{chan}
 K. S. Chan, Asymptotic behavior of the Gibbs sampler, J. Amer. Statist. Assoc. vol. 88, pp. 320-326  (1993).
 
 \bibitem{achen}
A.Y.  Chen, K. Sridharan. Optimization, Isoperimetric Inequalities, and Sampling via Lyapunov Potentials. ArXiv preprint arXiv:2410.02979 (2024).

\bibitem{ychen}
Y. Chen, R. Eldan. Localization schemes: A framework for proving mixing bounds for markov chains. In 2022 IEEE 63rd Annual Symposium on Foundations of Computer Science (FOCS), pp. 110--122. IEEE (2022).

\bibitem{zchen}
Z. Chen, K. Liu, E. Vigoda. Optimal mixing of Glauber dynamics: Entropy factorization via high-dimensional expansion. In Proceedings of the 53rd An- nual ACM SIGACT Symposium on Theory of Computing (STOC), pp. 1537--1550 (2021).

\bibitem{chewi-chen}
Y. Chen, S. Chewi, A. Salim, A. Wibisono. Improved analysis for a proximal algorithm for sampling. In Conference on Learning Theory (pp. 2984--3014). PMLR A. (2022)	

 
\bibitem{chewi-phd}
S. Chewi. An optimization perspective on log-concave sampling and beyond (Doctoral dissertation, Massachusetts Institute of Technology) (2023).



\bibitem{ciesielski-18}
K.C. Ciesielski, J. Jasinski. Fixed point theorems for maps with local and pointwise contraction properties. Canadian Journal of Mathematics, vol. 70, no. 3, pp. 538--594 (2018).


\bibitem{damlen1999gibbs}
P. Damlen, J. Wakefield, S. Walker.
Gibbs sampling for Bayesian non-conjugate and hierarchical models by using auxiliary variables. Journal of the Royal Statistical Society: Series B (Statistical Methodology),
vol. 61, no. 2, pp. 331--344 (1999).


\bibitem{bdurmus-19}
V. De Bortoli, A. Durmus. Convergence of diffusions and their discretizations: from continuous to discrete processes and back. ArXiv preprint arXiv:1904.09808 (2019).	

\bibitem{dplm-03}
P. Del Moral, M. Ledoux, L. Miclo. On contraction properties of Markov kernels. Probability theory and related fields, vol.126, no. 3, pp. 395--420  (2003).

 
\bibitem{dm-25}
P. Del Moral.
Stability of Schr\" odinger bridges and Sinkhorn semigroups for log-concave models. ArXiv preprint arXiv:2503.15963 (2025).


\bibitem{dhj}
P. Del Moral, E. Horton, A. Jasra. On the stability of positive semigroups. 
The Annals of Applied Probability, vol. 33, no. 6A, pp. 4424--4490 (2023).
  
\bibitem{dm-penev-2017}
P. Del Moral and S. Penev. Stochastic Processes: From Applications to Theory. CRC-Press (2016).

\bibitem{dss-19}
P. Del Moral and S.S. Singh. Backward It{\^ o}-Ventzell and stochastic interpolation formulae. arXiv preprint arXiv:1906.09145  (2019).

\bibitem{df-99}
P. Diaconis, D. Freedman. Iterated random functions. SIAM review, vol. 41, no. 1, pp. 45--76 (1999).	

\bibitem{diaconis}
P. Diaconis, L. Saloff-Coste, Separation cut-offs for birth and death chains, Ann. Appl. Probab., vol. 16,  no. 4, pp. 2098-2122 (2006).

\bibitem{ding}
J. Ding, E. Lubetzky, Y.  Peres, Total variation cutoff in birth-and-death chains, Probab. Theory Related Fields, vol. 146  no. 1--2, pp. 61--85 (2010).

\bibitem{dob-96} 
R.L. Dobrushin. Perturbation methods of the theory of Gibbsian fields. In Lectures on Probability Theory and Statistics: 
\'Ecole d'\'Et\'e de Probabilit\'es de Saint-Flour XXIV -1994. Lecture Notes in Mathematics 1648, pp. 1--66. Berlin: Springer (1996). 


\bibitem{douc}
R. Douc, E. Moulines, P. Priouret, and P. Soulier. Markov Chains. Springer (2019).

\bibitem{durmus-24}
A. Durmus, A. Eberle, A.  Enfroy, A. Guillin, P.  Monmarch\'e. : Discrete sticky couplings of functional autoregressive processes, Ann. Appl. Probab., vol. 34,  pp. 5032--5075 (2024).
 
\bibitem{eberle}
A. Eberle, A. Guillin,  R. Zimmer. Quantitative Harris-type theorems for diffusions and McKean–Vlasov processes. Transactions of the American Mathematical Society, vol. 371, no. 10, pp. 7135--7173 (2019).

\bibitem{eberle-majka}
A. Eberle, M. B. Majka.
Quantitative contraction rates for Markov chains on general state spaces.
Electron. J. Probab., vol. 24, pp. 1--36 (2019)

\bibitem{edelstein-61}
M. Edelstein, An extension of Banach’s contraction principle. Proc. Amer. Math. Soc. vol. 12 pp. 7-10 (1961).

\bibitem{edwards}
D.A. Edward. On the Kantorovich-Rubinstein theorem. Expositiones Mathematicae, vol. 29 pp. 387--398 (2011).


\bibitem{fort-02}
G. Fort. Computable bounds for V-geometric ergodicity of Markov transition kernels. Rap- port de Recherche, Univ. J. Fourier, RR 1047-M. (2002).

\bibitem{gelfand}
A. E. Gelfand. Gibbs sampling.
Journal of the American Statistical Association, vol. 95, no. 452. pp. 1300--1304 (2000).

\bibitem{gelfand-2}
 A. E. Gelfand and A. F. M. Smith, Sampling based approaches to calculating marginal densities, J. Amer. Statist. Assoc., vol. 85  pp. 398-409 (1990).



\bibitem{geman}
S. Geman, D.  Geman. Stochastic Relaxation, Gibbs Distri- butions and the Bayesian Restoration of Images, IEEE Trans. Pattern Anal. Mach. Intell, vol. 6, pp. 721--741 (1984).


\bibitem{glauber}
R.J. Glauber. Time‐dependent statistics of the Ising model. Journal of mathematical physics vol. 4, no. 2,  pp. 294--307 (1963).

\bibitem{guan}
Y. Guan,  K. Balasubramanian, S. Ma. Riemannian Proximal Sampler for High-accuracy Sampling on Manifolds. ArXiv:2502.07265 (2025).	

\bibitem{goldys}
B. Goldys, B. Maslowski. Exponential ergodicity for stochastic Burgers and 2D Navier-Stokes equations. Journal of Functional Analysis, vol. 226, no. 1, pp. 230--255 (2005).

\bibitem{goyal}
A. Goyal, G. Deligiannidis, N. Kantas. Mixing Time Bounds for the Gibbs Sampler under Isoperimetry. ArXiv preprint arXiv:2506.22258 (2025).



\bibitem{hairer-mattingly}
M. Hairer and J. C. Mattingly. Yet another look at Harris' ergodic theorem for Markov chains. Seminar on Stochastic Analysis, Random Fields and Applications VI, 109--117, Progr. Probab., 63, Birkh\"auser/Springer Basel AG, Basel (2011).


\bibitem{hairer-mattingly-scheutzow}
M. Hairer and J. C. Mattingly, M.
 Scheutzow. Asymptotic coupling and a general
form of Harris’ Theorem with applications to stochastic delay equations. Probab. Theory Related
Fields, vol. 149, pp. 223--259 (2011).
 

\bibitem{hobert}
J.P. Hobert, C. J. Geyer. Geometric ergodicity of Gibbs and block Gibbs samplers for a hierarchical random effects model. Journal of Multivariate Analysis, vol. 67, no. 2, pp.  414--430 (1998).


\bibitem{hobert-15}
J.P. Hobert, K. Khare. Computable upper bounds on the distance to stationarity for Jovanovski and Madras’s Gibbs sampler. Annales de la Faculté des sciences de Toulouse: Mathématiques, vol. 24. no. 4. (2015).


\bibitem{huang-21}
L.J. Huang, M.B. Majka, J. Wang. Strict Kantorovich contractions for Markov chains and Euler schemes with general noise. Stochastic Processes and their Applications, vol. 151, pp. 307--341  (2022).

\bibitem{jerrum}
M. Jerrum, J. B. Son, P. Tetali, E. Vigoda, Elementary bounds on Poincar\'e and log-Sobolev constants for decomposable Markov chains, Ann. Appl. Probab., vol. 14  no. 4, pp. 1741--1765 (2004).

\bibitem{jiaming}
L. Jiaming, Y. Chen. A proximal algorithm for sampling. arXiv preprint arXiv:2202.13975 (2022), Transactions on Machine Learning Research (2023).


\bibitem{jiaojiao}
F. Jiaojiao, B. Yuan, Y. Chen. Improved dimension dependence of a proximal algorithm for sampling. The Thirty Sixth Annual Conference on Learning Theory. PMLR (2023).
 
\bibitem{kellerer}
H.G. Kellerer, Duality theorems for marginal problems, Zeitschrift f\" ur Wahrscheinlichkeitstheorie und Verwandte Gebiete
vol. 67 pp. 399--432 (1984).
 
 \bibitem{kellerer2}
 H.G. Kellerer, Duality theorems and probability metrics. In Proceedings of the Seventh Conference on ProbabilityTheory,
in: BraSov. 1982, VNU Press, Utrecht, pp. 211--220 (1985).
  

\bibitem{lee}
 Y.T. Lee, R. Shen, K. Tian. Structured log-concave sampling with a restricted Gaussian oracle. Conference on Learning Theory. Conference on Learning Theory. PMLR (2021).


\bibitem{monmarche-25}
L. Liu, M.B. Majka, P. Monmarch\'e. L2-Wasserstein contraction for Euler schemes of elliptic diffusions and interacting particle systems. Stochastic Processes and their Applications, vol. 179 no. 104504 (2025).

\bibitem{luo}
D. Luo, J. Wang. Exponential convergence in Lp-Wasserstein distance for diffusion 
processes without uniformly dissipative drift. Math. Nachr.,  vol. 289, no. 14-15, pp. 1909--1926 (2016).

\bibitem{ma}
Y.A Ma, Y. Chen, C. Jin, N. Flammarion, M.I. Jordan. 
Sampling can be faster than optimization. Proceedings of the National Academy of Sciences, vol. 116, no. 42, pp.  20881--20885 (2019).



\bibitem{meyn-tweedie-92}
S.P. Meyn and R. L.  Tweedie. Stability of Markovian processes I: Criteria for discrete-time chains. Advances in Applied Probability, vol. 24, no. 3, pp. 542--574 (1992).

\bibitem{meyn-tweedie-93}
S.P. Meyn and R. L.  Tweedie.  Stability of Markovian processes II: Continuous-time processes and sampled chains. Advances in Applied Probability, vol. 25, no. 3, pp. 487--517 (1993).

\bibitem{meyn-tweedie}
S.P. Meyn and R. L.  Tweedie. A survey of Foster-Lyapunov techniques for general state space Markov processes. In Proceedings of the Workshop on Stochastic Stability and Stochastic Stabilization, Metz, France (1993).	

\bibitem{meyn-tweedie-2}
S.P. Meyn and  R. L.  Tweedie.  Markov Chains and Stochastic Stability. Springer Science \& Business Media (2012).

\bibitem{monmarche-AHL}
P. Monmarch\'e. 
Wasserstein contraction and Poincar\'e inequalities for elliptic diffusions with high diffusivity. Annales Henri Lebesgue, vol. 6, pp. 941--973 (2023).


\bibitem{mou}
  W. Mou, N. Flammarion, M. J. Wainwright, P. L. Bartlett. An efficient sampling algorithm for non-smooth composite potentials. arXiv:1910.00551v1  (2019), Journal of Machine Learning Research, vol. 23, no. 233  pp. 1--50 (2022).
   
\bibitem{peters}
H. J. M. Peters, P. P. Wakker. Convex functions on non-convex domains. Econ Lett, vol. 22, no. 2, pp. 251--255 (1986).
 


\bibitem{Quin}
Q. Qin, J.P. Hobert. Geometric convergence bounds for Markov chains in Wasserstein distance based on generalized drift and contraction conditions. In Annales de l'Institut Henri Poincar\'e (B) Probabilit\'es et Statistiques, vol. 58, no. 2, pp. 872--889 (2022).


\bibitem{rendell2020global}
L.J. Rendell, A.M.  Johansen, A. Lee, N. Whiteley.
Global consensus Monte Carlo. Journal of Computational and Graphical Statistics, vol. 30, no. 2, pp. 249--259 (2020).

\bibitem{rao}
Y. Rao, V.  Roy. Block Gibbs samplers for logistic mixed models: convergence properties and a comparison with full Gibbs samplers. Electronic Journal of Statistics, vol. 15, no. 2, pp.  5598--5625
(2021).

\bibitem{roman}
J. C. Rom\`an, J.P. Hobert. Geometric ergodicity of Gibbs samplers for Bayesian general linear mixed models with proper priors. Linear Algebra and its Applications, vol. 473, pp. 54--77 (2015).


\bibitem{rudolf-18}
D. Rudolf, N. Schweizer. Perturbation theory for Markov chains via Wasserstein distance. Bernoulli, vol.  24, no. 4A, pp. 2610--2639 (2018).


\bibitem{steinsaltz}
D. Steinsaltz. Locally contractive iterated function systems. Annals of Probability, vol. 27, no. 4, pp. 1952--1979 (1999).

\bibitem{vono2020asymptotically}
M. Vono, N. Dobigeon, P. Chainais.
Asymptotically exact data augmentation: Models, properties, and algorithms. Journal of Computational and Graphical Statistics, vol. 30, no. 2, pp.
 335--348  (2020).
 
 \bibitem{vono2019split}
M. Vono, N. Dobigeon, P. Chainais.
Split-and-augmented Gibbs sample-Application to large-scale inference problems.
IEEE Transactions on Signal Processing, vol. 67, no. 6, pp.1648--1661
  (2019).

  
\bibitem{yan}
M. Yan. Extension of convex function. J Convex Anal, vol. 21, no. 4, pp. 965--987 (2014).

 
\bibitem{Villani}
C. Villani. Optimal transport: old and new. Berlin: Springer (2008).



\end{thebibliography}
\end{document}